% Le Chen 2015-05-30.
% Jingyu Huang, 2015-05-25.

\documentclass[12pt]{article}
%%%%%%%%%%%%%%%%%%%%%%%%%%%%%%%%%%%%%%%%%%%%%%%%%%%%%%%%%%%%%%%%%%%%%%%%%%%%%%%%%%%%%%%%%%%%%%%%%%%%%%%%%%%%%%%%%%%%%%%%%%%%%%%%%%%%%%%%%%%%%%%%%%%%%%%%%%%%%%%%%%%%%%%%%%%%%%%%%%%%%%%%%%%%%%%%%%%%%%%%%%%%%%%%%%%%%%%%%%%%%%%%%%%%%%%%%%%%%%%%%%%%%%%%%%%%
\RequirePackage{amsthm,amsmath,amssymb,amscd}
\usepackage{verbatim}
\usepackage{color}
\usepackage{float} 
\usepackage[active]{srcltx}
\setcounter{MaxMatrixCols}{10}
\usepackage[usenames,dvipsnames]{xcolor}

\topmargin -1.5cm
 \oddsidemargin -0.06cm
 \evensidemargin -0.06cm
 \textwidth 16.60cm
 \textheight 22.95cm

\allowdisplaybreaks[4]

% -------------------------------------------------
% Environments here
% -------------------------------------------------
\theoremstyle{plain}
\newtheorem{theorem}{Theorem}[section]

\newtheorem{lemma}[theorem]{Lemma}
\newtheorem{property}[theorem]{Property}

\theoremstyle{definition}
\newtheorem{definition}[theorem]{Definition}

\newtheorem{remark}[theorem]{Remark}

\let\Section=\section
\def\section{\setcounter{equation}{0}\Section}
\newcommand{\R}{\mathbb{R}}
\newcommand{\bL}{\mathbb{L}}
\newcommand{\bC}{\mathbb{C}}

\newcommand{\ga}{\gamma\alpha}

\def\RR{\mathbb{R}}
\def\NN{\mathbb{N}}
\def\EE{\mathbb{E}}

\def\cF{{\cal F}}

\def\cH{{\cal H}}

\def\be{{\beta}}

\def\si{{\sigma}}

\def\al{{\alpha}}

\def\be{{\beta}}

\def\ga{{\gamma}}

\def\si{{\sigma}}

\def\La{{\Lambda}}

\def \eref#1{\hbox{(\ref{#1})}}

% Notation and packages by Le Chen
\usepackage{booktabs}

\newcommand{\Ceil}[1]{\left\lceil #1 \right\rceil}
\newcommand{\Indt}[1]{1_{\left\{#1 \right\}}}
\newcommand{\Norm}[1]{\left|\left|  #1   \right|\right|}
% -------------------------------------------------
% check references
% -------------------------------------------------
% \usepackage{refcheck}
% \usepackage[notref,notcite]{showkeys}
% \usepackage[notcite]{showkeys}
% \usepackage{showkeys}

\usepackage{graphicx}
\usepackage{graphics}
\usepackage{epsfig}

\newcommand{\ud}{\ensuremath{\mathrm{d}}}
\newcommand{\FoxH}[5]{H_{#2}^{#1}\left(#3\:\middle\vert\: \begin{subarray}{l}#4\\[0.4em] #5\end{subarray}\right)}
\RequirePackage[colorlinks,citecolor=blue,urlcolor=blue]{hyperref}
\usepackage{enumerate}

\begin{document}

\title{Space-time fractional diffusions in Gaussian noisy environment\\
\medskip
\small dedicated to Professor Bernt \O ksendal on the occasion of his 70th birthday}

\author{  {\sc Le Chen}\thanks{L. Chen is partially
supported by   the Swiss National Foundation for Scientific
Research  fellowship (P2ELP2\_151796).} ,  \    {\sc Guannan Hu}, {\sc Yaozhong Hu}\thanks{Y. Hu is partially supported by a grant from the Simons Foundation \#209206. }\\
 Department of Mathematics \\
University of Kansas \\
Lawrence, Kansas, 66045 USA\\
\ and \
{\sc Jingyu Huang}\thanks{J. Huang is supported by the National Science Foundation under Grant No. DMS-1440140 while in residence at the Mathematical Research Institute in Berkeley, California, during the fall 2015 semester.  
 \smallskip
 \newline
\textbf{Keywords}:
Gaussian noisy environment, time fractional diffusions,    time and space fractional order
spde, Fox H-functions, fundamental solutions,
nonnegativity, heat kernel type estimate, chaos expansion,  moment estimates.
} \\
Department of Mathematics\\
University of Utah\\
Salt Lake City, Utah, 84112 USA
}

%\date{June 28, 2013}
%\date{\today}
\date{}
\maketitle

\begin{abstract}
This paper studies  the   linear stochastic partial differential equation   of fractional orders
both in time and space variables  $\left(\partial^\beta  + \frac{\nu}{2} (-\Delta)^{\alpha/2} \right) u(t,x)= \lambda u(t,x) \dot{W}(t,x)$,  where $\dot W$ is a general Gaussian noise
and $\be\in (1/2, 2)$,    {$\al\in (0, 2]$}.   The existence and uniqueness of the solution,  the moment
bounds of the solution are obtained by using the fundamental solutions of the corresponding
deterministic counterpart represented by the Fox H-functions. Along the way, we obtain some new properties of the fundamental solutions.
\end{abstract}

%\tableofcontents

\section{Introduction}
In this article we consider the following linear stochastic partial differential equation of fractional orders
both in time and space variables:
\begin{align}  \label{E:SPDE}
\begin{cases}
\displaystyle \left(\partial^\beta  + \frac{\nu}{2} (-\Delta)^{\alpha/2} \right) u(t,x)= \lambda u(t,x) \dot{W}(t,x),&\qquad t>0,\: x\in\RR^d, \\[0.5em]
\displaystyle \left.\frac{\partial^k}{\partial t^k} u(t,x)\right|_{t=0}=u_k(x), &\qquad 0\le k\le \Ceil{\beta}-1, \:\: x\in\RR^d,
\end{cases}
\end{align}
with   {$\beta\in (1/2,2)$ and $\al\in (0, 2]$},
where $\Ceil{\beta}$ is the smallest integer greater than or equal to  $\beta$. Here and throughout the paper we denote
$\partial^k= \frac{\partial^k}{\partial t^k}$, $k\in\NN$.   We
limit our consideration to the above parameter ranges of $\beta$ and $\alpha$
since we plan to use
some particular properties of the corresponding {\it Fox H-functions} which will be proved
only for these parameter ranges. Now let us give more detailed explanation
on   the  terms appearing in the above equation.
The fractional derivative in time $\displaystyle \partial^\beta =\frac{\partial ^\beta}{\partial t^\beta}$
is understood in the {\it Caputo} sense:
\[
\partial^\beta  f(t) :=
\begin{cases}
\displaystyle
 \frac{1}{\Gamma(m-\beta)} \int_0^t  \frac{f^{(m)}(\tau)}{(t-\tau)^{\beta+1-m}}\ud
\tau\: &
\text{if $m-1<\beta<m$\;,}\\[1em]
\displaystyle
\frac{\ud^m}{\ud t^m}f(t)& \text{if $\beta=m$}\,,
\end{cases}
\]
where $t\ge 0$.
$\displaystyle \Delta=\sum_{i=1}^d \frac{\partial ^2}{\partial x_i^2}$ is the Laplacian with respect to spatial variables
and $ (-\Delta)^{\alpha/2}$ is the fractional Laplacian.
$\dot{W}$ is a  zero mean   Gaussian noise with the following  covariance structure
\[
\EE(\dot{W}(t,x)\dot{W}(s,y))=\gamma(t-s)\Lambda(x-y),
\]
where both (possibly generalized) functions $\gamma$ and $\Lambda$ are assumed to be nonnegative and nonnegative definite.
We denote by $\mu$ the Fourier transformation measure of  $\Lambda(x)$.
Namely,
\[
\La(x-y)=\frac{1} {(2\pi)^d}\int_{\RR^d} e^{i\xi(x-y)}\mu(\ud \xi)\,.
\]
{%\color {red} 
This Fourier transform is understood in distributional sense (see Section 2).
%, maybe we do not need to write it as an integral form
}
%  {??Or shall we remove this equation? (!!The Fourier transform is defined latter in Section 2.)}
%\noindent
When $\gamma(t)=\delta_0(t)$ and $\Lambda(x)=\delta_0(x)$,
this noise $\dot W$ reduces to the {\it space-time white noise}.
$\nu>0$ and $\lambda$ are some real valued parameters.
The given initial conditions $u_k(x)$ are assumed to be continuous and bounded   {functions}.
The product $u(t,x) \dot{W}(t,x)$ in equation \eqref{E:SPDE} is the Wick one (see e.g. \cite{huyan}).
So,  the equation will be understood in the Skorohod sense.  Let us   point out that some of our results can
also be extended to nonlinear equation (namely, replace $u(t,x) \dot{W}(t,x)$ in  \eqref{E:SPDE} by
$\si(u(t,x)) \dot{W}(t,x)$ for a   {Lipschitz} nonlinear function $\si$).
However, we limit ourselves to this linear case for two reasons: One is 
to simplify the presentation and to better explain the ideas and the other one is that we want to use the
chaos expansion method. 
%{\color {red} (I think we limit ourselves to this linear case is because the chaos expansion method works in this case)}
%  {??I agree with Jingyu, shall we highlight that the chaos expansion method here?}

The deterministic counterparts  of equation \eqref{E:SPDE}  have received a lot of attention and are called
anomalous diffusions.  They  appeared  in biological physics and other fields. Equation \eqref{E:SPDE} is an
anomalous diffusion in a Gaussian noisy environment.  More detailed
motivations for the study of this type of  equations  are given  in \cite{Chen14Time,ChenKimKim15,HuHu15, MijenaNana15ST}.
Here,  we   briefly mention   some recent results.
%In particular we refer  to  the references therein for the potential  applications of such equations.
%When $\beta\in (0,1]$ (resp. $\beta\in (1,2)$),
%this equation behaves like a stochastic heat (resp. wave) equation.
%In \cite{Chen14Time,MijenaNana15ST}, a fractional integral operator appears in front of the noise term
%$u(t,x)\dot{W}(t,x)$ to make the equation smoother (see \eqref{E:SPDE-I} below), which we call the {\it smoothed} equation.
%In this paper, we focus on this equation without this fractional integral operator --- the  {\it non-smoothed} equation.

When  $\beta\in \{1,2\}$ is an integer, $\alpha=2$, the equation has been studied by many authors, see for example,   \cite{BC1,BC2, ChenDalang13Heat,ChenDalang14Wave,
HHNT}.
The work by Chen and Dalang $\cite{ChenDalang14FracHeat}$  deals with  the case
where $\beta=1$, $\alpha\in (1,2]$.
Khoshnevisan and Foondun \cite{FK08Int} and Song \cite{Song15} study a similar equation  with the $\alpha$-stable generator
$(-\Delta)^{\al/2}$ replaced by a general L\'evy generator.

When $\be\in (0, 1)$,  $\al=2$,  $\Delta$ is replaced by a general elliptic operator,
and $\dot W$ is a fractional noise, the equation was studied in \cite{HuHu15}.

When $\be\in (0, 1)$,  $\al=2$ and $\dot W$ is a fractional noise,  the smoothed equation
\[
\left(\partial^\beta  - \frac{\nu }{2}\Delta   \right) u(t,x)=
I_t^{1 - \beta} \left[ u(t,x) \dot{W}(t,x)\right]
\]
%  {??Is this equation too large to be in the inline mode? Shall we move it as a centered equation?}
(see \eqref{E:SPDE-I} for a generalization) was studied   in \cite{Chen14Time}. %  when $\alpha=2$,
%$d=1$ and $\dot{W}$ is the space-time white noise.
In a series of papers \cite{FN15,MijenaNana14Int,MijenaNana15ST}, Nane and his coauthors
studied the case  $\alpha \in (0,2]$.

The case $\be\in (0, 1)$ corresponds to the slow diffusion  (subdiffusion).
For the fast diffusion case (super diffusion), i.e., $\beta\in (1,2)$, there have been  only a few  works.  %
%This model, especially the non-smoothed equation, serves as an interpolation between the well-studied stochastic heat and wave equations.
The first author of this paper studies in \cite{Chen14Time} the smoothed equation with $\alpha=2$, $d=1$ and 
with space-time white noise.
The corresponding non-smoothed equation is studied recently in \cite{CHD15}.
Both papers \cite{Chen14Time,CHD15} deal with the nonlinear equation, i.e., $\rho(u)\dot{W}$ with $\rho$ being a
Lipschitz function. %Hence, the techniques used in these two papers are very different from
%those used in the present paper. Due to the linear nature of equation \eqref{E:SPDE}, we are able to use
%the Wiener chaos expansions as in \cite{BC1} and \cite{HHNT}.

% To study equation \eqref{E:SPDE}
% the  important  tools  are the   fundamental solutions corresponding to its deterministic counterpart.
% Let us briefly recall them.
% There are two fundamental solutions
To study equation \eqref{E:SPDE}
the  important  tools that we shall use  are the   fundamental solutions corresponding to its deterministic counterpart.
Let us briefly   describe    them.
There are two fundamental solutions
\[
Z(t,x):=Z_{\alpha,\beta,d}(t,x)\quad\text{and}\quad
Y(t,x):=Y_{\alpha,\beta,d}(t,x)
\]
such that the solution $ u(t,x)$ to the following deterministic equation (the deterministic counterpart of
\eqref{E:SPDE}) 
\begin{align} \label{E:PDE}
\begin{cases}
\displaystyle \left(\partial^\beta  + \frac{\nu}{2} (-\Delta)^{\alpha/2} \right) u(t,x)= f(t,x),&\qquad t>0,\: x\in\RR^d, \\[0.5em]
\displaystyle \left.\frac{\partial^k}{\partial t^k} u(t,x)\right|_{t=0}=u_k(x), &\qquad 0\le k\le \Ceil{\beta}-1, \:\: x\in\RR^d,
\end{cases}
\end{align}
is represented by
\begin{align}\label{E:Duhamel}
 u(t,x) = J_0(t,x) +
\int_0^t \ud s \int_{\RR^d} \ud y\: f(s,y) Y(t-s,x-y),
\end{align}
where $f$ is a continuous and bounded function on $\RR_+\times\RR^d$, and 
$u_k$ is a continuous and bounded function on $\RR^d$.
In equation \eqref{E:Duhamel}  and throughout the paper, we denote
\begin{align}\label{E:J0}
J_0(t,x):=
\sum_{k=0}^{\Ceil{\beta}-1}\int_{\RR^d} u_{  {\Ceil{\beta}-1-k}}(y) \partial^{k} Z(t,x-y) \ud y\,.
\end{align}
Equations \eqref{E:Duhamel} and \eqref{E:J0}
say  that $Z$ and $Y$ are  the fundamental solutions corresponding respectively  to  the initial conditions and the inhomogeneous term of equation \eqref{E:PDE}.  For some parameter ranges of $\alpha$ and $\beta$, the fundamental solutions have been  studied in \cite{EK, Koc, Pskhu09, SchWyss89}. In  Section 3.1 we shall study them for all $\beta\in (1/2,2)$ and $\al\in (0, 2]$. In particular, we shall obtain 
some new properties such as the positivity of the fundamental solutions $Y$ and  $Z$.

% {\color {red} As far as I know, at least for $\beta = 2$,
% \begin{equation}
% J_0(t,x)=\int_{\RR^d} u_0(y)\frac{\partial}{\partial t} Z(t,x-y) dy + \int_{\RR^d} u_1(y)Z(t,x-y)dy\,,
% \end{equation}
% that is, there is a derivative respect to $t$ for the initial position term but no derivative for the initial speed term. If I am right, then Remark 3.6, Lemma 6.1 and the proof of Theorem 3.11 should be changed.
% }
%  {(!!Updated according to a comment by Jingyu.)}
Equation  \eqref{E:Duhamel}  motivates us to study the mild solution
to \eqref{E:SPDE} (see e.g. Definition \ref{D:Sol} below),  namely, the solution to the following
stochastic integral equation:
\begin{equation}\label{E:mild-1}
u(t,x)=J_0(t,x)+\int_0^t\int_{\RR^d}Y(t-s,x-y)u(s,y)  W(\ud s,\ud y).
\end{equation}
As in the classical case,  the above equation can be studied by using
  the It\^o-Wiener chaos expansion. To this end we need
to understand well the two fundamental solutions $Z$ and $Y$.
In particular, we need
their  nonnegativity and   some heat kernel like estimates.

%The special cases  $Z_{\alpha,\beta,d}(t,x)$ for $\beta\in (0,1]$ and $Z_{2,\beta,1}$ for $\beta\in (0,2)$ correspond to
%the fundamental solutions in \cite{MijenaNana15ST} and \cite{Chen14Time}, respectively.
%As in \cite{MijenaNana15ST}, the nonnegativity of $Z_{\alpha,\beta,d}$ is a fact that it is a convolution of two density functions.
The nonnegativity of some $Z$'s  is known. However, since
  $Y$ is the  {\it Riemann-Liouville} fractional derivative of $Z$,  %of order $1-\beta$ in the time variable,
its nonnegativity is   a challenging problem.  There have been only few results:
%We will prove the nonnegativity of $Y_{\alpha,\beta,d}$ when $\beta(0,1]$.
%The case when $\beta\in(1,2)$, the nonnegativity of $Y_{\alpha,\beta,d}$ is much less clear.
  {As proved in Lemma 25 of \cite{Pskhu09}}, $Y_{2,\beta,d}$   {with $\beta\in (1,2)$} is nonnegative   if and only if $d\le 3$.
  {The one dimensional case is proved in \cite{MLP}, namely, $D_t Z_{\alpha,\beta,1}$, and hence $Y_{\alpha,\beta,1}$,  is  nonnegative
either if $1<\beta\le \alpha\le 2$, or if $\alpha\in(0,1]$ and $\beta \in (0,2)$}.
In this paper,  we shall show  the nonnegativity of $Y$ in the following three cases:
\begin{equation}\label{E:Cases}
\left\{\begin{alignedat}{3}
&\alpha\in (0,2],\quad &&\beta\in (1/2,1), \quad&& d\in\NN, \\
&\alpha\in (0,2],\quad &&\beta\in (1,\alpha), \quad && d=1,\\
&\alpha=2, \quad &&\beta\in (1,2), \quad && d=2,3.
\end{alignedat}
\right.
\end{equation}
This includes the above mentioned results as special cases. Let us also point out that
for the smoothed SPDE, only the fundamental solution $Z$ is needed, which is usually more
regular  than the fundamental solution $Y$.

When $\be=1$ and $\al=2$, to show the solution of \eqref{E:SPDE} is square integrable, it is assumed in
\cite{HHNT} and \cite{HHLNT}
that the covariance of noise satisfies the following   {conditions:}
\begin{description}
\item{(i)}\ $\ga$ is locally integrable;
\item{(ii)}\
 {\it Dalang's condition} $\displaystyle \int _{\RR^d} \frac{\mu(\ud\xi)}{1+ |\xi|^{2 }} < \infty $ is satisfied
  (see also \cite{Dalang99Extending,FK08Int}).
\end{description}
For the existence and uniqueness of the solution to the general equation \eqref{E:SPDE},   Dalang's condition
will be replaced by the following condition:
\begin{equation}\label{E:DLcond}
\int _{\RR^d} \frac{\mu(\ud\xi)}{1+ |\xi|^{2\alpha-\alpha/\beta}} < \infty\,.
\end{equation}
It is obvious that if it is formally set   $\al=2$ and $\be=1$,  then 
\eqref{E:DLcond} is reduced to the usual Dalang's condition.

\bigskip
The remaining part of the  paper is organized as follows.
We first specify the noise structure and present  the definition of the solution   in Section \ref{Sec:Pre}.
The main results  are Theorem \ref{T:ExUni} on the existence and uniqueness of the mild
solution
and Theorem \ref{T:Rietz} on the moment bounds of the solution stated in Section \ref{Sec:Main}.
The proof of these  two theorems are based on some properties of the fundamental solutions
represented in terms of the Fox H-functions. These results
themselves are of particular interest and importance.
We also list them as Theorem \ref{T:PDE} and  Theorem \ref{T:Stable} in Section \ref{Sec:Main}.
The properties of the fundamental solutions (Theorem \ref{T:PDE}) are  proved in Section \ref{Sec:PDE}
by using the Fox H-functions.
In Section \ref{Sec:Stable}, we obtain  an expression of the density function for the $d$-dimensional spherically symmetric $\alpha$-stable distribution
 - an auxiliary result (Theorem \ref{T:Stable}) which is used in the proof of Theorem \ref{T:PDE}.
The existence and uniqueness result (Theorem \ref{T:ExUni}) of the solution to
\eqref{E:SPDE} is proved in Section \ref{Sec:ExUni}.
In Section \ref{Sec:Rietz}, we prove the explicit moment bounds when $\Lambda$ is the Riesz kernel.

Our main results (Theorem \ref{T:ExUni})
 assume that the fundamental solutions are nonnegative. However, when  $1<\beta <2$ and
when the dimension is high, the nonnegativity of the fundamental solution $Y$ is not known yet. In this case,
we  shall show   {in Theorem \ref{beta.2}} the existence and uniqueness of the solution of \eqref{E:SPDE} for some specific
Gaussian noise whose covariance function
 $\Lambda$ is the Riesz kernel.
Finally, in Appendix \ref{Sec:H} we collect some knowledge  on the Fox H-function which we need in this paper.

\section{Preliminary}\label{Sec:Pre}
Let us start by introducing some basic notions on Fourier transforms.
The space of real-valued infinitely differentiable
functions on $\RR^d$ with compact support is denoted by $\mathcal{D} (
\mathbb{R}^d)$ or $\mathcal{D}$. The space of Schwartz functions is
denoted by $\mathcal{S} ( \mathbb{R}^d)$ or $\mathcal{S}$. Its dual,
the space of tempered distributions, is denoted by  $\mathcal{S}' (
\mathbb{R}^d)$ or $\mathcal{S}'$. The Fourier
transform is defined with the normalization
\[ \mathcal{F}u ( \xi)  = \int_{\mathbb{R}^d} e^{- i
   \xi \cdot  x} u ( x) \ud x, \]
so that the inverse Fourier transform is given by $\mathcal{F}^{- 1} u ( \xi)
= ( 2 \pi)^{- d} \mathcal{F}u ( - \xi)$.

\smallskip
Similarly to \cite{HHNT}, on a complete probability space
$(\Omega,\mathcal{F},P)$ we consider a Gaussian noise $W$ encoded by a
centered Gaussian family $\{W(\varphi) ; \, \varphi\in
\mathcal{D}(\R_+\times \R^{d})\}$, whose covariance structure
is given by
\begin{equation}\label{cov1}
\EE \left (  W(\varphi) \, W(\psi) \right)
= \int_{\R_{+}^{2}\times\R^{2d}}
\varphi(s,x)\psi(t,y)\gamma(s-t)\Lambda(x-y)\ud x\ud y\ud s\ud t,
\end{equation}
where $\gamma: \R \rightarrow \R_+$ and $\Lambda: \R^d \rightarrow
\R_+$ are  nonnegative definite
functions and the Fourier transform $\mathcal{F}\Lambda=\mu$ such that $\mu(\ud\xi)
%:= \mu(\xi)\ud\xi
$ is a tempered
measure, that is, there is an integer $m \geq 1$ such that
$\int_{\R^d}(1+|\xi|^2)^{-m}\mu(\ud \xi)< \infty$.
Throughout the paper, we assume that $\ga$ is locally integrable and we denote
\begin{align}\label{E:Ct}
 C_t := 2 \int_0^t \gamma(s)\ud s, \quad t>0.
\end{align}

\smallskip

 Let $\mathcal{H}$  be the completion of
$\mathcal{D}(\R_+\times\R^d)$
endowed with the inner product
\begin{eqnarray}\label{innprod1}
\langle \varphi , \psi \rangle_{\mathcal{H}}&=&
\int_{\R_{+}^{2}\times\R^{2d}}
\varphi(s,x)\psi(t,y)\gamma(s-t)\Lambda(x-y) \, \ud x\ud y\ud s\ud t\\ \notag
&=&\frac{1}{(2\pi)^d} \int_{\R_{+}^{2}\times\R^{d}}  \mathcal{F} \varphi(s,\xi) \overline{ \mathcal{F} \psi(t,\xi)}\gamma(s-t) \mu(\ud\xi) \, \ud s\ud t,
\end{eqnarray}
where $\mathcal{F} \varphi$ refers to the Fourier transform with respect to the space variable only.
 The mapping $\varphi \rightarrow W(\varphi)$ defined on $\mathcal{D}(\R_+\times\R^d)$  can be extended
  to a linear isometry between
$\mathcal{H}$  and the Gaussian space
spanned by $W$.  We will denote this isometry by
\begin{equation*}
W(\phi)=\int_0^{\infty}\int_{\R^d}\phi(t,x)W(\ud t,\ud x),\quad\text{for $\phi \in \mathcal{H}$.}
\end{equation*}
Notice that if $\phi$ and $\psi$ are in
$\mathcal{H}$, then
$\EE \left( W(\phi)W(\psi)\right) =\langle\phi,\psi\rangle_{\mathcal{H}}$.

We will denote by $D$ the derivative operator in the sense of
Malliavin calculus. That is, if $F$ is a smooth and cylindrical
random variable of the form
\begin{equation*}
F=f(W(\phi_1),\dots,W(\phi_n))\,,
\end{equation*}
with $\phi_i \in \mathcal{H}$, $f \in C^{\infty}_p (\R^n)$ (namely $f$ and all
its partial derivatives have polynomial growth), then $DF$ is the
$\mathcal{H}$-valued random variable defined by
\begin{equation*}
DF=\sum_{j=1}^n\frac{\partial f}{\partial
x_j}(W(\phi_1),\dots,W(\phi_n))\phi_j\,.
\end{equation*}
The operator $D$ is closable from $L^2(\Omega)$ into $L^2(\Omega;
\mathcal{H})$  and we define the Sobolev space $\mathbb{D}^{1,2}$ as
the closure of the space of smooth and cylindrical random variables
under the norm
\[
\|F\|_{1,2}=\sqrt{\EE[F^2]+\EE[\|DF\|^2_{\mathcal{H}}]}\,.
\]
We denote by $\delta$ the adjoint of the derivative operator given
by the duality formula
\begin{equation}\label{E:dual}
\EE \left( \delta (u)F \right) =\EE \left( \langle DF,u
\rangle_{\mathcal{H}}\right) ,
\end{equation}
for all $F \in \mathbb{D}^{1,2}$ and any element $u \in L^2(\Omega;
\mathcal{H})$ in the domain of $\delta$. The operator $\delta$ is
also called the {\it Skorohod integral} because in the case of the
Brownian motion, it coincides with an extension of the It\^o
integral introduced by Skorohod. We refer to Nualart \cite{Nua}
for a detailed account of the Malliavin calculus with respect to a
Gaussian process.

With the Skorohod integral introduced,
the definition of the solution to equation \eref{E:SPDE} can be stated as follows.

\begin{definition}\label{D:Sol} Let $Z$ and $Y$ be the fundamental solutions defined by
\eqref{E:PDE} and \eqref{E:Duhamel}.
An adapted random field $ \{ u={u(t,x): \:t\geq 0, x\in \mathbb{R}^d} \} $ such that
$\EE \left[u^2(t,x)\right]<+\infty$ for all $(t,x)$
is a {\it mild solution} to \eqref{E:SPDE}, if for all $(t,x)\in\R_+\times \mathbb{R}^d$,
the process
\[
\left\{Y(t-s, x-y)u({s,y})1_{[0,t]}(s): \: s\ge0,\: y\in \mathbb{R}^d \right\}
\]
 is Skorohod integrable (see \eqref{E:dual}), and
 $u$ satisfies
 \begin{equation}\label{E:mild}
u(t,x)=J_0(t,x)+\int_0^t\int_{\RR^d}Y(t-s,x-y)u(s,y)  W(\ud s,\ud y)
\end{equation}
 almost surely for all $(t,x)\in\RR_+\times\RR^d$,  where $J_0(t,x)$ is defined by
 \eqref{E:J0}.
%
% where $Z_{\alpha,\beta}$ and $Y_{\alpha,\beta}$ are defined in Section \ref{Section: Fundamental Sol} and the stochastic integral on the right hand side is defined in \eref{E:dual}.
\end{definition}

The main ingredient in proving the existence and uniqueness of the solution is the Wiener chaos expansion, to which we now turn.

Suppose that $u=\{u({t,x}); t\geq 0, x \in \RR^d\}$ is a square integrable solution to equation \eref{E:mild}. Then for all fixed $(t,x)$ the random variable $u({t,x})$ admits the following Wiener chaos expansion
\begin{equation}
u({t,x})=\sum_{n=0}^{\infty}I_n(f_n(\cdot,\cdot,t,x))\,,
\end{equation}
where for each $(t,x)$, $f_n(\cdot,\cdot,t,x)$ is a symmetric element in
$\mathcal{H}^{\otimes n}$.
Then, as in \cite{Hu, Hu1, HuHu15}, to show the existence and uniqueness of the solution it suffices to show that for all $(t,x)$ we have
\begin{equation}\label{eq: L2 chaos}
\sum_{n=0}^{\infty}n!\|f_n(\cdot,\cdot,t,x)\|^2_{\mathcal{H}^{\otimes n}}< \infty\,.
\end{equation}

For technical reasons (see \eref{Davar id} below), we will assume, throughout the paper,  the following properties on  $\Lambda$:
\begin{itemize}
\item $\Lambda: \RR^d \rightarrow [0, \infty]$ is a continuous function, where $[0, \infty]$ is the usual one-point compactification of $[0, \infty)$.
\item $\Lambda(x) < \infty$ if and only if $x \neq 0$ or $\mathcal{F}(\Lambda) \in L^{\infty}(\RR^d)$ and $\Lambda(x) < \infty$ when $x \neq 0$.
\end{itemize}
With these two assumptions, according to Lemma 5.6 in \cite{KX}, for any  Borel probability measures $\nu_1(dx)$ and $\nu_2(dx)$,  the following identity holds,
\begin{equation}\label{Davar id}
\int_{\RR^d} \int_{\RR^d} \Lambda(x-y) \nu_1(\ud x) \nu_2(\ud y) =
\frac{1}{(2\pi)^d} \int_{\RR^d} \mathcal{F} \nu_1(\xi) \overline{\mathcal{F}\nu_2(\xi)} \mu(\ud \xi)\,.
\end{equation}
In particular, the above result can be applied to the case
when $\nu_1(\ud x)=f_1(x)\ud x$ and $\nu_2(\ud x)= f_2(x)\ud x$
for two nonnegative functions $f_1$ and $f_2\in L^1 (\RR^d)$.

\section{Main results} \label{Sec:Main}

\subsection{Fundamental solutions: formulas and nonnegativity}
Our first result is concerned with  the fundamental solutions to \eqref{E:PDE} stated in the following theorem.
We need the   two parameter  {\em Mittag-Leffler function} $E_{\alpha,\beta}(z)$:
\begin{equation}
E_{\alpha,\beta}(z):=\sum_{n=0}^{\infty}\frac{z^n}{\Gamma(\alpha n + \beta)}\,, \quad
\Re(\alpha)>0,\: \beta\in\mathbb{C},\: z\in\mathbb{C}\,,
\end{equation}
where $\Re(\alpha)$ is the real part of the complex number $\alpha$.
When $\beta = 1$, we also write $E_{\alpha}(z):= E_{\alpha,1}(z)$.
The $H$-functions
appearing  in the following  theorem and their properties are   given in the appendix.
\begin{theorem}\label{T:PDE}
The  fundamental solutions to \eqref{E:PDE} are given by
\begin{align}\label{E:Zab}
 Z(t,x):=Z_{\alpha,\beta,d}(t,x)= \pi^{-d/2} t^{\Ceil{\beta}-1} |x|^{-d}
 \FoxH{2,1}{2,3}{\frac{ |x|^\alpha}{2^{\alpha-1}\nu t^\beta}}{(1,1),\:(\Ceil{\beta},\beta)}
 {(d/2,\alpha/2),\:(1,1),\:(1,\alpha/2)}
\end{align}
and
\begin{align}\label{E:Yab}
 Y(t,x):=Y_{\alpha,\beta,d}(t,x)= \pi^{-d/2} |x|^{-d}t^{\beta-1}
 \FoxH{2,1}{2,3}{\frac{ |x|^\alpha}{2^{\alpha-1}\nu t^\beta}}
 {(1,1),\:(\beta,\beta)}{(d/2,\alpha/2),\:(1,1),\:(1,\alpha/2)}\,.
\end{align}
If $\beta\in(1,2)$,  then
\begin{align}\label{E:Z*ab}
 Z^*(t,x):=Z_{\alpha,\beta,d}^*(t,x) =
\frac{\ud}{\ud t}  Z_{\alpha,\beta,d}(t,x) =
 \pi^{-d/2} |x|^{-d}
 \FoxH{2,1}{2,3}{\frac{ |x|^\alpha}{2^{\alpha-1}\nu t^\beta}}{(1,1),\:(1,\beta)}
 {(d/2,\alpha/2),\:(1,1),\:(1,\alpha/2)}.
\end{align}
The   Fourier transforms of the fundamental solutions are given by the following:
 \begin{align}
 \label{E:FZ}
\cF Z(t,\cdot)(\xi) &= t^{\Ceil{\beta}-1} E_{\beta,\Ceil{\beta}}(-2^{-1}\nu t^\beta |\xi|^\alpha),\\
 \cF Y(t,\cdot)(\xi)& = t^{\beta-1} E_{\beta,\beta}(-2^{-1}\nu t^\beta |\xi|^\alpha),
 \label{E:FY}\\
 \cF Z^*(t,\cdot)(\xi) &= E_{\beta}(-2^{-1}\nu t^\beta |\xi|^\alpha), \quad \text{if $\beta\in (1,2)$};
  \label{E:FZ*}
 \end{align}
Moreover,  we have the following results on the positivity of the fundamental solutions.
 \begin{enumerate}[(a)]
  \item If $\beta\in (0,1]$,   {$d\in\NN$ and $\alpha\in(0,2]$}, then both   $Z(t,x)$ and $Y(t,x)$ are nonnegative;
  \item If  $\beta\in (1,2)$, $d\in \{2,3\}$, and $\alpha=2$, then both   $Z(t,x)$ and $Y(t,x)$ are nonnegative;
  \item If  $\beta\in (1,2)$, $d=1$ and $\alpha\in [\beta, 2]$, then all   $Z(t,x)$, $Y(t,x)$ and $Z^*(t,x)$ are nonnegative.
\end{enumerate}
\end{theorem}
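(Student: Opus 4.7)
The plan is to treat the three assertions of the theorem in order: the Fourier-side formulas, the $H$-function representations in physical space, and the positivity. The Fourier transforms are the easy starting point. Applying $\mathcal{F}$ in the space variable to \eqref{E:PDE} with $f \equiv 0$ and a single nontrivial initial datum $u_k$, and using that $\mathcal{F}[(-\Delta)^{\alpha/2} u](\xi) = |\xi|^\alpha \mathcal{F} u(\xi)$, the problem reduces to the scalar Caputo Cauchy problem
\begin{equation*}
\partial^\beta \widehat{u}(t,\xi) + \tfrac{\nu}{2} |\xi|^\alpha \widehat{u}(t,\xi) = 0,\qquad \partial_t^j \widehat{u}(0,\xi) = \delta_{jk}\widehat{u}_k(\xi),\quad 0\le j\le \Ceil{\beta}-1.
\end{equation*}
Laplace transforming in $t$ and using the standard formula $\mathcal{L}[\partial^\beta f](z) = z^\beta \mathcal{L}[f](z) - \sum_{j=0}^{\Ceil{\beta}-1} z^{\beta-1-j} f^{(j)}(0)$ gives an algebraic equation whose inverse Laplace transform is expressed through the Mittag-Leffler function; this yields \eqref{E:FZ}, \eqref{E:FY} and, by differentiating \eqref{E:FZ} in $t$ for $\beta\in(1,2)$, \eqref{E:FZ*}.

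Next, I would invert $\mathcal{F}$ to get the $H$-function formulas. Since $\widehat{Z}$ depends only on $|\xi|$, rotational invariance reduces the $d$-dimensional inverse Fourier transform to a single Hankel integral:
\begin{equation*}
Z(t,x) = (2\pi)^{-d/2} |x|^{1-d/2} \int_0^\infty \rho^{d/2}\, J_{d/2-1}(\rho|x|)\, t^{\Ceil{\beta}-1} E_{\beta,\Ceil{\beta}}\!\bigl(-\tfrac{\nu}{2} t^\beta \rho^\alpha\bigr)\, d\rho.
\end{equation*}
Both the Bessel function $J_{d/2-1}$ and the Mittag-Leffler function admit Mellin-Barnes representations as Fox $H$-functions (see the appendix, Property \ref{T:Stable}, and the classical catalog recalled there). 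Computing the Mellin transform of $Z(t,\cdot)$ via the Mellin convolution theorem and recognising the result as a single Mellin-Barnes integral of type $H^{2,1}_{2,3}$ gives formula \eqref{E:Zab}; the same computation with $\Ceil{\beta}$ replaced by $\beta$ (resp.\ by $1$) produces \eqref{E:Yab} (resp.\ \eqref{E:Z*ab}). Care is needed to verify that the strip of convergence of the Mellin-Barnes integral is nonempty for all admissible $(\alpha,\beta,d)$, which is a routine check against the conditions recorded in the appendix.

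For positivity in case (a), $\beta\in(0,1]$, I would invoke Pollard's theorem: $E_\beta(-y)$ is completely monotone for $y\ge 0$, so there is a probability measure $M_\beta$ on $[0,\infty)$ with $E_\beta(-y) = \int_0^\infty e^{-sy}M_\beta(ds)$. Substituting $y = \tfrac{\nu}{2}t^\beta|\xi|^\alpha$ into \eqref{E:FZ} (with $\Ceil{\beta}=1$) and Fourier-inverting term-by-term represents $Z(t,\cdot)$ as a mixture of symmetric $\alpha$-stable densities on $\RR^d$, which are nonnegative by Theorem \ref{T:Stable}. The analogous representation for $Y$ uses $y\, E_{\beta,\beta}(-y) = -\tfrac{d}{dy}E_\beta(-y)$, which is again completely monotone, so the same subordination argument applies. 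For case (b), $\alpha=2$, $d\in\{2,3\}$, $\beta\in(1,2)$, the Fox $H$-functions collapse to Wright functions and one can reproduce the Schneider-Wyss/Pskhu formulas; in $d=2,3$ the explicit expression in terms of the Mainardi function yields nonnegativity directly, as recorded in Lemma 25 of \cite{Pskhu09}.

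The main obstacle is case (c): $\beta\in(1,2)$, $d=1$, $\alpha\in[\beta,2]$. Here $E_{\beta,\beta}(-y)$ is \emph{not} completely monotone, so Pollard-type subordination fails. The strategy I would pursue is to exploit the one-dimensional Fox $H$-function formula \eqref{E:Yab} directly: after reducing the contour integral by a cancellation between $(1,1)$ in the numerator and denominator parameters, the remaining $H$-function can be rewritten as a Mellin convolution of a symmetric $\alpha$-stable density (nonnegative by Theorem \ref{T:Stable}) against a one-variable kernel on $(0,\infty)$. The hypothesis $\alpha\ge\beta$ should be exactly what makes this auxiliary kernel nonnegative, matching the result of \cite{MLP} on $D_t Z_{\alpha,\beta,1}$. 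Once $Z^*$ is shown nonnegative via this kernel analysis, nonnegativity of $Y$ then follows from the relation $Y = I_t^{1}(Z^*\text{ term}) /\Gamma$-factors implicit in comparing \eqref{E:Yab} and \eqref{E:Z*ab}, and nonnegativity of $Z$ follows by further integration in $t$.
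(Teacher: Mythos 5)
Your overall architecture matches the paper's: pass to Fourier/Laplace variables to obtain the Mittag--Leffler formulas, use Fox H-function machinery for the spatial representations, and prove positivity for $\beta\le 1$ by subordination against symmetric $\alpha$-stable densities, with cases (b) and (c) resting on \cite{Pskhu09} and \cite{MLP}. The differences are mostly of direction. The paper verifies \eqref{E:Zab} by computing the \emph{forward} Fourier transform of the claimed H-function (via Theorems \ref{T:HConvH} and \ref{T:Hankel}) and matching it with \eqref{E:FZ}, and then obtains \eqref{E:Yab} and \eqref{E:Z*ab} from \eqref{E:Zab} by applying the Riemann--Liouville derivative $D_{0+}^{\Ceil{\beta}-\beta}$ (resp.\ $d/dt$) through Theorem \ref{T:RL-D}; you propose to invert directly via a Hankel/Mellin computation. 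For case (a) the paper packages the same complete-monotonicity input (Schneider's theorem for $E_{\theta,\mu}(-z)$ with $0<\theta\le\min(\mu,1)$, plus Bernstein) as the nonnegativity of an $H^{1,0}_{1,1}$ kernel and writes $Y$ as a Mellin convolution of that kernel with the stable density of Theorem \ref{T:Stable}; your Pollard-type subordination is the same idea in Laplace--Fourier language. Both routes are viable and of comparable length.

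Three concrete points need repair. First, the identity you invoke for $Y$ is wrong: $y\,E_{\beta,\beta}(-y)\ne -\tfrac{d}{dy}E_\beta(-y)$; the correct relation is $E_{\beta,\beta}(-y)=-\beta\,\tfrac{d}{dy}E_\beta(-y)$ (test $\beta=1$). The conclusion you need, complete monotonicity of $E_{\beta,\beta}(-\cdot)$ for $\beta\in(0,1]$, survives, since $-d/dy$ of a completely monotone function is completely monotone. Second, the homogeneous problem with $f\equiv 0$ yields only \eqref{E:FZ} and, by differentiation, \eqref{E:FZ*}; the kernel $Y$ is defined through the Duhamel formula \eqref{E:Duhamel}, so to obtain \eqref{E:FY} you must either Laplace-transform the inhomogeneous equation (producing the symbol $(z^\beta+\tfrac{\nu}{2}|\xi|^\alpha)^{-1}$, whose inverse Laplace transform is $t^{\beta-1}E_{\beta,\beta}(-\tfrac{\nu}{2}t^\beta|\xi|^\alpha)$) or invoke, as the paper does via \cite{EK}, that $Y=D_{0+}^{\Ceil{\beta}-\beta}Z$. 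Third, in case (c) the sentence that $\alpha\ge\beta$ ``should be exactly what makes this auxiliary kernel nonnegative'' is a conjecture, not an argument; the paper does not reprove this fact either --- it cites \cite{MLP} for the nonnegativity of $Z^*_{\alpha,\beta,1}$ when $1<\beta\le\alpha\le 2$ and then gets $Y$ and $Z$ as fractional integrals of $Z^*$ of positive orders. Either cite \cite{MLP} explicitly at that point or carry out the kernel analysis; as written, case (c) is not proved.
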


The proof of this theorem is given in Section  \ref{Sec:PDE}.

\begin{remark}
Here are some known special cases:
\begin{enumerate}[(1)]
 \item When $\alpha=2$ and $\beta\in (0,1)$,
 it is proved in \cite{Koc,SchWyss89} and in \cite{EK}, respectively,  that
\begin{align}\label{E:Z2}
 Z_0(t,x)= \pi^{-d/2} |x|^{-d} \FoxH{2,0}{1,2}{\frac{ |x|^2}{2\nu t^\beta}}{(1,\beta)}{(d/2,1),(1,1)},
\end{align}
and
\begin{align}\label{E:Y2}
 Y_0(t,x)=\pi^{-d/2} |x|^{-d} t^{\beta-1}\FoxH{2,0}{1,2}{\frac{ |x|^2}{2\nu t^\beta}}{(\beta,\beta)}{(d/2,1),(1,1)},
\end{align}
which correspond to our
$Z_{2,\beta,d}(t,x)$ and $Y_{2,\beta,d}(t,x)$, respectively.
The equivalence is clear by applying Property \ref{Prop:Reduction}.
 %to
%simplify the expressions for $Z_{2,\beta,d}(t,x)$ and $Y_{2,\beta,d}(t,x)$.
For $Z_{2,\beta,d}$, see also \cite[Chapter 6]{KilbasEtc06}.
% The expression $G_t(x)$ in \cite{MN1}, which corresponds to our $Z_{\alpha,\beta}(t,x)$, is incorrect.
\item When $\alpha=2$ and $\beta\in (0,2)$, it is proved in \cite{Pskhu09} that
\begin{align}\label{E:YW2}
 \Gamma_{\beta,d}(t,x)=\pi^{-d/2} |x|^{-d} t^{\beta-1}
 \FoxH{2,0}{1,2}{\frac{ |x|^2}{4 t^\beta}}{(\beta,\beta)}{(d/2,1),(1,1)},
\end{align}
which corresponds to our $Y_{2,\beta,d}$ with $\nu=2$.
\item In \cite{MLP}, the fundamental solution $Z^*_{\alpha,\beta,d}(t,x)$ has been studied for all $\alpha, \beta\in(0,2)$ and $d=1$.
From the Mellin-Barnes integral representation (6.6) of \cite{MLP}, we  see that the reduced Green function of \cite{MLP}
can be expressed by using the Fox H-function:
\begin{align}\label{E:Kabt}
K_{\alpha,\beta}^\theta(x)=\frac{1}{|x|}
\FoxH{2,1}{3,3}{|x|^\alpha}
{(1,1),\:(1,\beta),\:(1,\frac{\alpha-\theta}{2})}
{(1,1),\:(1,\alpha),\:(1,\frac{\alpha-\theta}{2})}, \quad x\in\RR,
\end{align}
where $\alpha$ and $\beta$ have the same meaning as in this paper and $\theta$ is the skewness: $|\theta|\le \min(\alpha,2-\alpha)$.
For the symmetric $\alpha$-stable case, i.e., $\theta=0$, this expression can be simplified by using
the definition of the Fox H-function and the fact that (see, e.g., \cite[5.5.5]{NIST2010})
\begin{align}\label{E:GGPI}
 \frac{\Gamma(1+\alpha s)}{\Gamma(1+\alpha s/2)} = \frac{1}{\sqrt{\pi}}2^{\alpha s} \Gamma(1/2 + \alpha s/2).
\end{align}
Hence,
\begin{align}
K_{\alpha,\beta}^0(x)=\frac{1}{\sqrt{\pi}|x|}
\FoxH{2,1}{2,3}{(|x|/2)^\alpha}
{(1,1),\:(1,\beta)}
{(1/2,\alpha/2),\:(1,1),\:(1,\alpha/2)},\quad x\in\RR.
\end{align}
This implies that the  fundamental solution in  \cite[(1.3)]{MLP}
\[
G^0_{\alpha,\beta}(x,t)=t^{-\beta/\alpha}K^0_{\alpha,\beta}(t^{-\beta/\alpha} x) =
\frac{1}{\sqrt{\pi}|x|}
\FoxH{2,1}{2,3}{\frac{|x|^\alpha}{2^\alpha t^\beta}}
{(1,1),\:(1,\beta)}
{(1/2,\alpha/2),\:(1,1),\:(1,\alpha/2)}
\]
corresponds to our $Z^*_{\alpha,\beta,1}(t,x)$ with $\nu=2$.
\end{enumerate}
\end{remark}

The proof of the nonnegativity part in  Theorem \ref{T:PDE} requires a representation of the
spherically symmetric $\alpha$-stable distribution from the Fox H-function,
which is of interest by itself.
The one-dimensional case can be found  in \cite{MLP};  see Remark \ref{R:Stable} below.

\begin{theorem}\label{T:Stable}
Let $X$ be a centered, $d$-dimensional spherically symmetric $\alpha$-stable random variable with $\alpha\in (0,2]$.
Then the characteristic function and the density of $X$ are, respectively,
\begin{align}
\label{E_:Stable}
f_{\alpha,d}(\xi)= \exp\left(-|\xi|^\alpha\right),\quad\xi\in\RR^d,
\end{align}
and
\begin{align}
\label{E:Stable}
\rho_{\alpha,d}(x)= \pi^{-d/2} |x|^{-d}
\FoxH{1,1}{1,2}{(|x|/2)^\alpha}{(1,1)}{(d/2,\alpha/2),\:(1,\alpha/2)},\quad x\in\RR^d.
\end{align}
\end{theorem}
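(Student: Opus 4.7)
The plan is to verify the identity by comparing the Mellin transforms of both sides in the radial variable $r=|x|$, and concluding by Mellin inversion. Since $X$ is spherically symmetric with characteristic function $f_{\alpha,d}(\xi)=\exp(-|\xi|^\alpha)$, Fourier inversion together with the standard radial identity
$$\int_{S^{d-1}}e^{ir\sigma\cdot\omega}\,d\sigma(\omega)=(2\pi)^{d/2}r^{1-d/2}J_{d/2-1}(r),$$
reduces the density to the Hankel-type integral
$$\rho_{\alpha,d}(x)=(2\pi)^{-d/2}\,r^{1-d/2}\int_0^\infty e^{-\rho^\alpha}J_{d/2-1}(r\rho)\,\rho^{d/2}\,d\rho.$$

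Next I compute $M[\rho_{\alpha,d}](s):=\int_0^\infty r^{s-1}\rho_{\alpha,d}(r)\,dr$ for $\operatorname{Re}(s)$ in a suitable vertical strip. After interchanging the order of integration, the substitution $u=r\rho$ separates variables, and the classical Mellin formula
$$\int_0^\infty u^{a-1}J_{d/2-1}(u)\,du=2^{a-1}\,\frac{\Gamma(s/2)}{\Gamma((d-s)/2)}\quad\text{with }a=s-d/2+1,$$
together with the substitution $u=\rho^\alpha$ in the remaining $\rho$-integral, produces after elementary simplification
$$M[\rho_{\alpha,d}](s)=\frac{2^{s-d}\,\pi^{-d/2}}{\alpha}\cdot\frac{\Gamma(s/2)\,\Gamma((d-s)/\alpha)}{\Gamma((d-s)/2)}.$$
On the other hand, the Mellin--Barnes definition of the Fox H-function from the appendix gives
$$\int_0^\infty z^{\sigma-1}\,\FoxH{1,1}{1,2}{z}{(1,1)}{(d/2,\alpha/2),\,(1,\alpha/2)}\,dz=\frac{\Gamma(d/2+\alpha\sigma/2)\,\Gamma(-\sigma)}{\Gamma(-\alpha\sigma/2)}.$$
A change of variable $z=(r/2)^\alpha$ in the Mellin transform of the RHS of \eqref{E:Stable}, combined with the specialization $\sigma=(s-d)/\alpha$, transforms this Gamma ratio into precisely the same expression as the one computed for $M[\rho_{\alpha,d}](s)$. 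Uniqueness of the Mellin transform on the common strip of analyticity then yields \eqref{E:Stable}.

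The main technical obstacle is the Fubini interchange: the Bessel integral is only conditionally convergent for each fixed $s$. I would handle this by first restricting to $\alpha\in(0,2)$, where the factor $e^{-\rho^\alpha}$ produces absolute convergence on a suitable strip (e.g.\ $0<\operatorname{Re}(s)<(d+1)/2$), and then addressing the boundary case $\alpha=2$ by noting that $\rho_{2,d}$ is an explicit Gaussian whose H-function representation reduces via Property \ref{Prop:Reduction} and the Gamma identity \eqref{E:GGPI}. A secondary point requiring care is verifying that the asymptotic conditions on the H-function guarantee that $F(r):=\pi^{-d/2}r^{-d}H^{1,1}_{1,2}((r/2)^\alpha|\cdots)$ is integrable against $r^{s-1}$ on a nonempty strip, so that Mellin inversion is applicable; this follows from the standard order estimates for Fox H-functions collected in the appendix.
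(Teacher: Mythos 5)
Your proposal is correct in outline and in all of its Gamma-function arithmetic, but it takes a genuinely different route from the paper. The paper starts from the same Hankel-type inversion integral $\rho_{\alpha,d}(r) = (2\pi)^{-d/2}r^{1-d/2}\int_0^\infty e^{-t^\alpha} J_{(d-2)/2}(rt)\, t^{d/2}\,\ud t$, but then rewrites \emph{both} factors $t^{(d+2)/2}J_{(d-2)/2}(rt)$ and $e^{-t^\alpha}$ as Fox H-functions via \eqref{E:BesselJ} and \eqref{E:zExp} and invokes the convolution Theorem \ref{T:HConvH} as a black box to produce the $H^{1,1}_{1,2}$ directly. You instead unpack that machinery: you compute the Mellin transform of $\rho_{\alpha,d}$ in $r=|x|$ by hand using the classical Weber formula $\int_0^\infty u^{a-1}J_\nu(u)\,\ud u = 2^{a-1}\Gamma(\tfrac{\nu+a}{2})/\Gamma(\tfrac{\nu-a}{2}+1)$, and match it against the Mellin--Barnes integrand $\Gamma(d/2+\alpha\sigma/2)\Gamma(-\sigma)/\Gamma(-\alpha\sigma/2)$ of the claimed H-function at $\sigma=(s-d)/\alpha$. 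I checked both sides: each equals $\frac{2^{s-d}\pi^{-d/2}}{\alpha}\,\Gamma(s/2)\Gamma((d-s)/\alpha)/\Gamma((d-s)/2)$, and your $\alpha=2$ sanity check via Property \ref{Prop:Reduction} and \eqref{E:zExp} correctly recovers $(4\pi)^{-d/2}e^{-|x|^2/4}$. What your approach buys is self-containedness (no appeal to the fairly heavy hypotheses of Theorem \ref{T:HConvH}); what it costs is that you must justify the analytic steps yourself rather than inheriting them from the quoted theorem.

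That cost is where the one real flaw sits. Your claim that $e^{-\rho^\alpha}$ ``produces absolute convergence on a suitable strip (e.g.\ $0<\Re(s)<(d+1)/2$)'' conflates the strip of validity of the Weber formula with a strip of absolute convergence for Fubini. Since $|J_{(d-2)/2}(u)|\asymp u^{-1/2}$ at infinity, the double integral
\begin{equation*}
\int_0^\infty\!\!\int_0^\infty r^{\Re(s)-d/2}\,e^{-\rho^\alpha}\,\bigl|J_{(d-2)/2}(r\rho)\bigr|\,\rho^{d/2}\,\ud\rho\,\ud r
\end{equation*}
converges only for $0<\Re(s)<(d-1)/2$: the exponential decay in $\rho$ does nothing to tame the conditional convergence in $r$. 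This strip is empty for $d=1$ (and only marginally nonempty for $d=2$), so a plain Fubini argument cannot cover all dimensions. The gap is repairable — e.g.\ insert a factor $e^{-\epsilon r}$ (or $e^{-\epsilon u}$ after the substitution $u=r\rho$), carry out the now absolutely convergent computation, and pass to the limit $\epsilon\to0$ using the Abel-regularized form of the Weber integral; alternatively, handle $d=1$ separately by the known representation of \cite{MLP}, exactly as the paper's Remark \ref{R:Stable} observes. But as written, the step you flag as ``the main technical obstacle'' is resolved by an argument that does not work in the lowest dimensions.
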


The proof of this theorem is given in Section  \ref{Sec:Stable}.

\begin{remark}\label{R:Stable}
When $d=1$, the formula \eqref{E:Stable} yields  a result  in \cite{MLP}.
In particular, as proved in \cite{MLP} (see \eqref{E:Kabt}), when $d=1$,  we have
\[
\rho_{\alpha,1}(x)= |x|^{-1}
\FoxH{1,1}{2,2}{|x|^\alpha}{(1,1),\:(1,\alpha/2)}{(1,\alpha),\:(1,\alpha/2)}=
\pi^{-1/2} |x|^{-1}
\FoxH{1,1}{1,2}{(|x|/2)^\alpha}{(1,1)}{(1/2,\alpha/2),\:(1,\alpha/2)},
\]
where the second equality is due to \eqref{E:GGPI} and the definition of the Fox H-function.
\end{remark}

\subsection{Existence and uniqueness of solutions to the SPDE}
The following is one of the main theorems of the paper. 
\begin{theorem}\label{T:ExUni}
Assume the following conditions:
\begin{enumerate}[(1)]
\item $Y_{\alpha,\beta,d}(t,x)$ is nonnegative;
\item $\beta \in (1/2,2)$ and $\alpha\in (0,2]$;
\item $\gamma$ is locally integrable;
\item $\mu$ satisfies Dalang's  condition \eqref{E:DLcond};
\item The initial conditions are such that for all $t> 0$,
 \begin{align}\label{E:HatCt}
\widehat{C}_t:=\sup_{y\in\RR^d,\: s\in[0,t]} |J_0(s,y)|<+\infty.
\end{align}
\end{enumerate}
Then relation \eref{eq: L2 chaos} holds for each $(t,x)$. Consequently, equation \eref{E:SPDE} admits a unique mild solution in the sense of Definition \ref{D:Sol}.
\end{theorem}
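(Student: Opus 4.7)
The plan is to carry out the standard Picard-iteration-in-chaos argument. Iterating the mild formulation \eqref{E:mild} and matching orders, I find that if $u(t,x)=\sum_{n\ge 0}I_n(f_n(\cdot;t,x))$ solves \eqref{E:mild}, then $f_n(\cdot;t,x)$ must be the symmetrization, in the $n$ pairs $(s_i,y_i)$, of
\[
 g_n(s_1,y_1,\dots,s_n,y_n;t,x) = J_0(s_1,y_1)\prod_{i=2}^{n} Y(s_i-s_{i-1},y_i-y_{i-1})\, Y(t-s_n,x-y_n)\, \mathbf{1}_{\{0<s_1<\cdots<s_n<t\}}.
\]
Thus verifying \eqref{eq: L2 chaos} will simultaneously produce a candidate mild solution and force uniqueness of $L^2$-solutions.

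To estimate $\|f_n\|^2_{\mathcal{H}^{\otimes n}}$ I first bound it by $\|g_n\|^2_{\mathcal{H}^{\otimes n}}$ using $S_n$-invariance of the inner product (from symmetry of $\gamma$ and $\Lambda$) combined with the triangle inequality. The key pointwise dominations are $|J_0(s_1,y_1)J_0(r_1,y_1')|\le \widehat C_t^{\,2}$ from hypothesis (5), and $Y\ge 0$ from hypothesis (1); together they bound the integrand of $\|g_n\|^2_{\mathcal{H}^{\otimes n}}$ by $\widehat C_t^{\,2}$ times a product of nonnegative $Y\cdot Y$ factors. After the substitution $z_1=y_1$, $z_i=y_i-y_{i-1}$ for $i\ge 2$, the spatial Fourier representation \eqref{innprod1} converts this into an integral of products of $\mathcal{F}Y$ terms whose arguments are partial sums $\eta_i=\xi_i+\cdots+\xi_n$. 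Using \eqref{E:FY} together with the classical Mittag-Leffler tail bound $|E_{\beta,\beta}(-r)|\le C/(1+r)$ for $r\ge 0$, the estimate reduces to iterating, over the nested time-simplex, the single-step bound
\[
 \int_0^t s^{2\beta-2}\,\left|E_{\beta,\beta}\!\left(-\tfrac{\nu}{2}s^\beta|\xi|^\alpha\right)\right|^2 ds \le \frac{C_t'}{1+|\xi|^{2\alpha-\alpha/\beta}},
\]
whose integrability near $s=0$ requires $\beta>1/2$ (hypothesis (2)) and whose integrability against $\mu$ is exactly Dalang's condition \eqref{E:DLcond} (hypothesis (4)); the $\gamma(s_i-r_i)$-factors are absorbed using local integrability of $\gamma$ (hypothesis (3)) via \eqref{E:Ct}. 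Summing the resulting bound over $n$ yields \eqref{eq: L2 chaos}.

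The main technical obstacle is the Fourier-side bookkeeping for the iteration: since the $\eta_i$ are nested partial sums of the $\xi_j$, the single-step bound must be applied recursively, peeling off one $\xi_i$ per level via the identity \eqref{Davar id}. The nonnegativity assumption (1) on $Y$ is indispensable here, since \eqref{Davar id} is phrased for nonnegative (probability) measures that arise from renormalizing the successive $Y$-products. Once \eqref{eq: L2 chaos} is established, setting $u(t,x):=\sum_n I_n(f_n(\cdot;t,x))$ gives an $L^2$-mild solution, and uniqueness follows because any two $L^2$-mild solutions have chaos kernels satisfying the same Picard recursion, so they coincide.
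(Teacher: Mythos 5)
Your overall architecture coincides with the paper's: identify the chaos kernels by Picard iteration, bound $|J_0|$ by $\widehat C_t$, decouple the $s$ and $r$ time variables by Cauchy--Schwarz and local integrability of $\gamma$, pass to the Fourier side, and use the nonnegativity of $Y$ and $\Lambda$ (via \eqref{Davar id}) to replace the nested partial sums $|\xi_i+\cdots+\xi_1|$ by $|\xi_i|$ --- this last point is exactly the paper's step \eqref{E_:YNonNeg}, and your account of why hypothesis (1) is indispensable there is correct. Your single-step bound is also correct: the substitution $v=\tfrac{\nu}{2}s^\beta|\xi|^\alpha$ gives $\int_0^\infty s^{2\beta-2}E_{\beta,\beta}^2(-\tfrac{\nu}{2}s^\beta|\xi|^\alpha)\,\ud s\le C|\xi|^{-(2\alpha-\alpha/\beta)}$, with integrability at $v=0$ forcing $\beta>1/2$; this matches the paper's constant $C_{\nu,\beta}$.

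The gap is in the final sentence, ``Summing the resulting bound over $n$ yields \eqref{eq: L2 chaos}.'' If you apply the uniform single-step bound $C_t'/(1+|\xi|^{2\alpha-\alpha/\beta})$ at every level and then integrate against $\mu(\ud\xi_i)$, you obtain $n!\,\|f_n\|^2_{\cH^{\otimes n}}\le \widehat C_t^{\,2}\,(C_tC_t'K)^n$ with $K=\int_{\RR^d}(1+|\xi|^{2\alpha-\alpha/\beta})^{-1}\mu(\ud\xi)$, a geometric series that converges only when $C_tC_t'K<1$; since $C_t$ grows with $t$, this fails for large $t$ (and for strong noise). The factorial gain needed for summability must come from the simplex constraint $\sum_i w_i\le t$, but your single-step bound discards that constraint by integrating each increment over all of $(0,t)$ or $(0,\infty)$. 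The paper closes this by splitting each frequency variable at a threshold $N$ (the $I/I^c$ decomposition around \eqref{E:CnDn}): on $\{|\xi_i|\le N\}$ the Mittag--Leffler factor is bounded by a constant and the simplex integral produces the factor $t^{(2\beta-1)k}/\Gamma((2\beta-1)k+1)$, while on $\{|\xi_i|>N\}$ the full-line time integral is paid for by the tail mass $C_N=\int_{|\xi|>N}|\xi|^{-(2\alpha-\alpha/\beta)}\mu(\ud\xi)$, which Dalang's condition \eqref{E:DLcond} lets you make smaller than $(2C_*C_t)^{-1}$. You need this dichotomy (or an equivalent device, e.g.\ inserting an exponential weight $e^{-\rho\sum_i w_i}$ and taking $\rho$ large) to conclude; as written, your iteration does not establish \eqref{eq: L2 chaos} for all $t$.
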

The proof of this theorem is given in Section  \ref{Sec:ExUni}.

\begin{remark}
From Theorem \ref{T:PDE}, it follows that  the three cases in \eqref{E:Cases} satisfy the above assumptions   {(1) and (2)}.
Moreover,
if $u_k\in L^{\infty}(\R^d)$ for the first two cases in \eqref{E:Cases},
or if $u_0\in L^{\infty}(\R^d)$ and $u_1(x)\equiv u_1$ is a constant for
the last case in \eqref{E:Cases},
then by Lemma \ref{L:J0} below,
\[
|J_0(t,x)|
\le
% \sum_{k=0}^{\Ceil{\beta}-1}
% \Norm{u_k}_{L^{\infty}(\RR^d)}
% \int_{\RR^d}\left|\lMr{t}{D}{*}^k Z(s,y-z)\right|\ud z
% =
\Norm{u_0}_{L^{\infty}(\RR^d)}+t^{\beta-1}\Norm{u_1}_{L^{\infty}(\RR^d)}\Indt{\beta>1}.
\]
Hence the assumption (5) is also satisfied.
The Dalang condition \eqref{E:DLcond} imposes a further restriction on the possible values of $(\alpha,\beta)$
due to the spatial correlation $\Lambda(x)$.
\end{remark}

\begin{remark}[Space-time white noise case]
When the noise $\dot{W}$ is a space-time white noise, i.e., $\gamma(t)=\delta_0(t)$ and $\Lambda(x)=\delta_0(x)$,
then Dalang's condition \eqref{E:DLcond} becomes
\begin{align}\label{E:WN-d}
\frac{d}{\alpha} + \frac{1}{\beta}<2.
\end{align}
This condition implies that $\beta>1/2$.
In particular, if $\alpha=2$ and $d=1$, then \eqref{E:WN-d} reduces to
\[
\beta>2/3\:,
\]
which recovers the condition in \cite{CHD15} and \cite[Section 5.2]{ChenKimKim15}.
If $\beta=1$ and $d=1$,   then this condition becomes
\begin{align}\label{E:a>1}
\alpha>1\:,
\end{align}
which recovers the condition in \cite{ChenDalang14FracHeat}.
\end{remark}

% Some more discussions....

\subsection{The smoothed equation}

The methodology used in the proof of Theorem \ref{T:ExUni} can also be used to study  the following  equation
\begin{equation}\label{E:SPDE-I}
\left(\partial^\beta  + \frac{\nu}{2} (-\Delta)^{\frac{\alpha}{2}}\right) u(t,x)=
I_t^{\lceil \beta \rceil - \beta} \left[ u(t,x) \dot{W}(t,x)\right]\,,
\end{equation}
with the same initial conditions as \eqref{E:SPDE}.
Here $I_t^{\beta}$ is the   {\it Riemann-Liouville fractional integral}  of order $\beta$
(with an  abuse of the notation $\beta$):
\begin{equation*}
I_t^{\beta}f(t)=\frac{1}{\Gamma(\beta)} \int_0^t (t-s)^{\beta-1} f(s) \ud s, \quad \text{for $t>0$ and $\beta>0$}\,.
\end{equation*}
Due to the fractional integral in equation \eref{E:SPDE-I} which plays a
smoothing role, the mild formulation for the solution can be expressed
by using $Z(t,x)$ only, namely,
\begin{equation}\label{E:mild-I}
u(t,x)=J_0(t,x)+\int_0^t\int_{\RR^d}Z(t-s, x-y)u(s,y)
  {W(\ud s,\ud y)}\,.
\end{equation}
Then, using the same procedure as in the proof of Theorem \ref{T:ExUni}, we have the following result.
\begin{theorem}\label{T:ExUni-I}
Assume the conditions (3) and (5) in Theorem \ref{T:ExUni} and the other
conditions are replaced by the following:
\begin{enumerate}
 \item[(1')] $Z_{\alpha,\beta,d}(t,x)$ is nonnegative;
 \item[(2')]  $\beta\in (1/2,1]\cup (3/2,2)$ and $\alpha\in (0,2]$;
 \item[(4')] $\mu$ satisfies
\begin{equation}\label{E:DLcon-I}
\int _{\RR^d} \frac{\mu(\ud\xi)}{1+ |\xi|^{\alpha(2\Ceil{\beta}-1)/\beta}} < \infty\,.
\end{equation}
\end{enumerate}
Then relation  \eqref{eq: L2 chaos} holds for each $(t,x)$.
Consequently, the smoothed equation \eref{E:SPDE-I} admits a unique mild solution in the sense of Definition \ref{D:Sol} with
$Y$   replaced by $Z$.
\end{theorem}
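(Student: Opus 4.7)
The plan is to mirror the proof of Theorem \ref{T:ExUni} step by step, replacing the kernel $Y$ by $Z$ throughout and tracking how the change of Fourier symbol modifies the Dalang-type integrability required for convergence of the Wiener chaos series.

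First I would iterate \eqref{E:mild-I} to obtain a formal chaos expansion $u(t,x) = \sum_{n \ge 0} I_n(f_n(\cdot;t,x))$, where
\[
f_n(s_1,y_1,\ldots,s_n,y_n;t,x) = \frac{1}{n!}\sum_{\sigma \in \mathfrak{S}_n} Z(t - s_{\sigma(n)},\, x - y_{\sigma(n)})\cdots Z(s_{\sigma(2)} - s_{\sigma(1)},\, y_{\sigma(2)} - y_{\sigma(1)})\, J_0(s_{\sigma(1)}, y_{\sigma(1)}),
\]
the sum restricted to $s_{\sigma(1)} < \cdots < s_{\sigma(n)}$. This is formally identical to the chaos kernels for \eqref{E:mild}, with every occurrence of $Y$ replaced by $Z$. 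Existence and uniqueness will follow once we verify \eqref{eq: L2 chaos} for each $(t,x)$.

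Next, using nonnegativity of $Z$ from assumption (1') together with the boundedness of $J_0$ from (5), I can bound $f_n$ by a symmetric nonnegative function and apply \eqref{Davar id} to each spatial pair. Parseval's identity together with the Fourier formula \eqref{E:FZ} then reduces $n!\|f_n(\cdot;t,x)\|_{\mathcal{H}^{\otimes n}}^2$ to an iterated integral whose key factor is
\[
\int_0^t |\mathcal{F}Z(s,\cdot)(\xi)|^2\, \gamma(\cdot)\, \ud s, \qquad |\mathcal{F}Z(s,\cdot)(\xi)|^2 = s^{2(\Ceil{\beta}-1)}\bigl|E_{\beta,\Ceil{\beta}}(-2^{-1}\nu s^\beta |\xi|^\alpha)\bigr|^2.
\]
Applying the standard Mittag-Leffler bound $|E_{\beta,\gamma}(-x)| \le C_{\beta,\gamma}/(1+x)$ valid for $\beta \in (0,2)$, $x \ge 0$, and performing the scaling $u = s |\xi|^{\alpha/\beta}$ (after controlling $\gamma$ by local integrability, assumption (3)), one finds that the large-$|\xi|$ contribution decays like $|\xi|^{-\alpha(2\Ceil{\beta}-1)/\beta}$. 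Integrability against $\mu$ is then exactly \eqref{E:DLcon-I}, so condition (4') delivers a uniform bound $\int_0^t|\mathcal{F}Z(s,\cdot)(\xi)|^2 \gamma\,\ud s\,\mu(\ud\xi) \le K_t < \infty$. A Picard-type recursion in $n$ then yields $n!\|f_n(\cdot;t,x)\|^2_{\mathcal{H}^{\otimes n}} \le \widehat{C}_t^{\,2}\, K_t^n/n!$ up to combinatorial constants, which sums to give \eqref{eq: L2 chaos}. Uniqueness and measurability/adaptedness of $u$ then transfer verbatim from the proof of Theorem \ref{T:ExUni}.

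The delicate point, and the one dictating condition (2'), is the finiteness of the dimensionless integral $\int_0^\infty u^{2\Ceil{\beta}-2}/(1+u^\beta)^2\,\ud u$ produced by the scaling. Convergence at $u=\infty$ forces $2\Ceil{\beta}-2-2\beta < -1$, i.e.\ $\Ceil{\beta} < \beta + 1/2$, which is satisfied when $\beta\in(1/2,1]$ (where $\Ceil{\beta}=1$) and when $\beta\in(3/2,2)$ (where $\Ceil{\beta}=2$), but fails on the gap $\beta\in(1,3/2]$. This is the main obstacle and the reason why the smoothed problem, despite being better behaved than \eqref{E:SPDE}, still excludes an interior interval of $\beta$; it is a genuine feature of using $Z$ (which has slower time decay than $Y$ when $\Ceil{\beta}=2$) rather than an artifact of the estimates.
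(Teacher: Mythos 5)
Your proposal is correct and follows essentially the same route as the paper: the paper's proof of Theorem \ref{T:ExUni-I} is literally ``repeat the proof of Theorem \ref{T:ExUni} with $Y$ replaced by $Z$,'' the only highlighted change being the scaling of $\int_0^\infty w^{2(\Ceil{\beta}-1)}E_{\beta,\Ceil{\beta}}^2(-2^{-1}\nu w^\beta|\xi|^\alpha)\,\ud w$, which produces exactly your exponent $\alpha(2\Ceil{\beta}-1)/\beta$ (hence (4')) and whose convergence at infinity forces $\Ceil{\beta}<\beta+1/2$ (hence (2')), both of which you identified. The only loose point is your shorthand bound $K_t^n/n!$ for the chaos norms --- the actual summability comes from the low/high-frequency splitting with the factor $\Gamma((2\Ceil{\beta}-1)k+1)^{-1}$ and a small $C_N$ as in the proof of Theorem \ref{T:ExUni} --- but since you explicitly defer to that proof, this is a presentational rather than a mathematical gap.
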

\begin{remark} The condition (1') is different and  is usually easier to verify  than the condition (1) in Theorem \ref{T:ExUni}.  When $\beta\in  (1/2, 1]$, the condition \eqref{E:DLcon-I} becomes
$\int _{\RR^d} \frac{\mu(\ud\xi)}{1+ |\xi|^{ \alpha /\beta}} < \infty$ which is also
 weaker than \eqref{E:DLcond} (since $\be\le 1$).
When $\beta\in  (3/2,2)$, the condition \eqref{E:DLcon-I} becomes
$\int _{\RR^d} \frac{\mu(\ud\xi)}{1+ |\xi|^{3\alpha /\beta}} < \infty$ which is also
 weaker than \eqref{E:DLcond} (since $\be<2$).
\end{remark}

The proof is essentially the same as that for Theorem \ref{T:ExUni}, the only change in the proof worthy to be pointed out is that instead of computing the integral
\begin{equation*}
\int_0^{\infty} w^{2(\Ceil{\beta} -1) } E_{\beta,\Ceil{\beta}}^2 (-2^{-1}\nu w^{\beta} |\xi|^\alpha) \ud w\,,
\end{equation*}
we now need to compute the integral
\begin{align}\notag
\int_0^{\infty} w^{2(\Ceil{\beta}-1)}E_{\beta,\Ceil{\beta}}^2(-2^{-1}\nu w^{\beta}|\xi|^\alpha) \ud w
&= \frac{(2/\nu)^{(2\Ceil{\beta}-1)/\beta}}{|\xi|^{\alpha(2\Ceil{\beta}-1)/\beta}}
\int_0^{\infty} \frac{1}{\beta} s^{\frac{1}{\beta}(2\Ceil{\beta}-\beta-1)} E_{\beta,\Ceil{\beta}}^2(-\nu s)\ud s\\
&= \frac{C}{|\xi|^{\alpha(2\Ceil{\beta}-1)/\beta}}\,.
\label{E:CxiI}
\end{align}
The integrability condition of the above equation at zero and at infinity
implies that $\beta>0$ and $\beta\in (1/2,1]\cup (3/2,2]$ (which is equivalent
to $\Ceil{\beta}<\beta+1/2$), respectively.
  {Note that this} condition on $\beta$ is more restrictive
than the condition $\beta\in (0,2)$ in \cite{Chen14Time}.
% This is because of using $S$
%Dalang's condition \eqref{E:DLcon-I} follows from \eqref{E:CxiI}.
%The detail of the proof is left for the interested reader.

\begin{remark}[Space-time white noise case]
When the noise $\dot{W}$ is a space-time white noise(namely  $\mu(d\xi)=d\xi$),
then Dalang's condition \eqref{E:DLcon-I} becomes
\begin{align}
\label{E':WN-d}
d<\alpha(2\Ceil{\beta}-1)/\beta\qquad\text{or}\qquad \frac{d}{\alpha} +\frac{1}{\beta} < \frac{2\Ceil{\beta}}{\beta}.
\end{align}
In particular, if $\alpha=2$ and $d=1$, then this condition reduces to $\beta<2$.
If $\beta=1$ and $d=1$, then this condition becomes \eqref{E:a>1},
which recovers the condition in \cite{ChenDalang14FracHeat}.
\end{remark}

\subsection{Moment bounds}
In this subsection we give some upper bounds for the $p$-th moment and the lower bound of the second moment of the solution  for some specific choice of the covariance kernel.
\begin{theorem}\label{T:Rietz}
Assume the following conditions:
\begin{enumerate}[(1)]
 \item  The initial conditions   {satisfy condition (5) of Theorem \ref{T:ExUni};}
 \item $(\alpha,\beta,d)$ satisfies one of the three conditions in \eqref{E:Cases};
 \item $\Lambda(x)=|x|^{-\kappa}$, $x\in\RR^d$ with
\[
  {0<\kappa} < \min (2\alpha-\alpha/\beta, d).
\]
\end{enumerate}
Then the solution $u(t,x)$ to \eqref{E:SPDE} satisfies that for all $p \geq 1$,
\begin{equation}
\EE\left[ \left|u(t,x)\right|^p\right] \leq C^p \widehat{C}_t^p
\exp \left(t (C_{\kappa} C_t \widetilde{C} C_* ({2}/{\nu})^{\kappa/\alpha} (2\pi)^{-d})^{\frac{\alpha}{2\alpha \beta- \alpha -\beta \kappa}} p ^{\frac{2\alpha\beta-\beta \kappa}{2\alpha \beta -\alpha -\beta \kappa}}  \right)\,,
\label{e.p-moment}
\end{equation}
where $C_t$  and $\widehat{C}_t$ are defined in \eqref{E:Ct}   {and \eqref{E:HatCt}, respectively}, $C=C(\alpha,\beta,\kappa)>0$, and
\[
 C_* = \Gamma(2\beta -1-\beta \kappa/\alpha)\quad\text{and}\quad
 \widetilde{C} = \int_{\RR^d} E_{\beta,\beta}^2(-|\xi|^{\alpha}) |\xi|^{\kappa-d} d\xi,
\]
and $C_{\kappa}$ appears in the Fourier transform of $|x|^{-\kappa}$, i.e., $\mu(d\xi) = C_{\kappa}|\xi|^{\kappa-d}$.

In particular, if $\gamma$ is the Dirac delta function
and if the initial data $u_0(x)\equiv u_0>0$ is a constant and $u_1\equiv 0$ when $\beta>1$, then
for some constant $c=c(\alpha,\beta,\kappa)>0$,
\begin{equation}
\EE\left[|u(t,x)|^2\right] \geq c \:   {u_0^2}
\exp \left (t \: (C_{\kappa} \widetilde{C} (4\pi)^{-d} C_*\: (2/\nu)^{\kappa/\alpha})^{\frac{1}{2\beta -1 -\beta \kappa/\alpha} } \right)\,.
\end{equation}
\end{theorem}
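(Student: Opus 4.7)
The plan is to use the Wiener chaos expansion of the mild solution established in Theorem \ref{T:ExUni}, together with Nelson's hypercontractivity to pass from $L^2$ to $L^p$, specialized throughout to the Riesz spectral measure $\mu(\ud\xi) = C_\kappa |\xi|^{\kappa - d}\ud\xi$. The upper bound comes from estimating each $\|f_n\|_{\mathcal{H}^{\otimes n}}$ and summing the chaos decomposition; the lower bound exploits the nonnegativity of $Y$ and $Z$ and the resulting nonnegativity of every term in the sum $\EE[u^2] = \sum_n n!\|f_n\|^2$.

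First I would write down the non-symmetric chaos kernel on the simplex $0 < s_1 < \cdots < s_n < t$:
\[
\tilde{f}_n(s_1, y_1, \ldots, s_n, y_n; t, x) = J_0(s_1, y_1)\, Y(t-s_n, x-y_n)\prod_{j=1}^{n-1} Y(s_{j+1}-s_j, y_{j+1}-y_j),
\]
and bound $|J_0| \le \widehat{C}_t$. Applying \eqref{innprod1}, Plancherel, and the explicit Fourier transform \eqref{E:FY} of $Y$, the symmetrized quantity $n!\|f_n\|_{\mathcal{H}^{\otimes n}}^2$ becomes a $2n$-fold time integral against $\gamma$ of a product of factors $(s_{j+1}-s_j)^{\beta-1} E_{\beta,\beta}(-\tfrac{\nu}{2}(s_{j+1}-s_j)^\beta |\xi_j|^\alpha)$ weighted by $\prod C_\kappa |\xi_j|^{\kappa - d}$. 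Exploiting the homogeneity of the Riesz factor through the substitution $|\xi_j| \mapsto |\xi_j|(s_{j+1}-s_j)^{-\beta/\alpha}$ separates frequencies from times: the radial $\xi$-integral produces the factor $\widetilde{C}$, and the key Mittag-Leffler identity
\[
\int_0^\infty w^{2\beta - 2}E_{\beta,\beta}^2(-\tfrac{\nu}{2}w^\beta r)\,\ud w = C_* (2/\nu)^{(2\beta-1)/\beta} r^{-(2\beta-1)/\beta},
\]
a variant of \eqref{E:CxiI}, handles the remaining time integrations after bounding $\gamma(s_i - s_j)$ using $C_t$. Setting $\theta := 2\beta - 1 - \beta\kappa/\alpha > 0$ (positive because $\kappa < 2\alpha - \alpha/\beta$), one arrives at
\[
n!\,\|f_n\|_{\mathcal{H}^{\otimes n}}^2 \le \widehat{C}_t^{\,2}\,\frac{A^n\, t^{n\theta}}{\Gamma(n\theta + 1)},\qquad A := C_\kappa C_t \widetilde{C} C_* (2/\nu)^{\kappa/\alpha}(2\pi)^{-d}.
\]

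Nelson's hypercontractivity gives $\|I_n(f_n)\|_{L^p} \le (p-1)^{n/2}\sqrt{n!}\,\|f_n\|_{\mathcal{H}^{\otimes n}}$. Summing,
\[
\|u(t,x)\|_{L^p} \le \widehat{C}_t \sum_{n\ge 0} \frac{\bigl((p-1)\,At^\theta\bigr)^{n/2}}{\sqrt{\Gamma(n\theta + 1)}},
\]
and a Stirling-based saddle-point analysis (equivalently the asymptotic $E_\theta(z) \sim \theta^{-1}e^{z^{1/\theta}}$) bounds this by $C\exp\bigl(c\,t\,A^{1/\theta}(p-1)^{1/\theta}\bigr)$; raising to the $p$-th power reproduces \eqref{e.p-moment}, using $1/\theta = \alpha/(2\alpha\beta - \alpha - \beta\kappa)$ and $1 + 1/\theta = (2\alpha\beta - \beta\kappa)/(2\alpha\beta - \alpha - \beta\kappa)$. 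For the lower bound, when $\gamma = \delta_0$, $u_0 > 0$ is constant and $u_1 \equiv 0$, the nonnegativity of $Z$ combined with $\int Z(t,x)\,\ud x = \mathcal{F}Z(t,0)=1$ from \eqref{E:FZ} gives $J_0 \equiv u_0$, so every $f_n$ is nonnegative and $\EE[u(t,x)^2] = \sum_n n!\|f_n\|^2$ admits a term-by-term lower bound obtained by reversing the scaling step above (every manipulation there is an equality except for the truncation of the $\xi$-integral to a dominant annulus). This yields $n!\|f_n\|^2 \ge c u_0^2 (A')^n t^{n\theta}/\Gamma(n\theta+1)$ with $A' = C_\kappa \widetilde{C} C_* (2/\nu)^{\kappa/\alpha}(4\pi)^{-d}$, and the Mittag-Leffler asymptotic produces the stated lower bound.

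The main obstacle is the first step: the spatial Fourier integrals over $(\xi_1, \ldots, \xi_n)$ are coupled through the convolution structure of the Plancherel pairing, and the untangling into a product of single-variable radial integrals relies crucially on the pure-power homogeneity of the Riesz spectral density $|\xi|^{\kappa - d}$ and on the multiplicative form of $\mathcal{F}Y$. The bookkeeping of constants and the reduction of the time integrals to Beta-type integrals that produce precisely $\Gamma(n\theta + 1)^{-1}$ require some care, as the saddle-point argument delivering the exact exponents in $p$ and $t$ is sensitive to the correct $n$-dependence at the $\Gamma$ function.
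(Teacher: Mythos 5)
Your upper-bound argument is essentially the paper's: chaos expansion, hypercontractivity, the scaling $\eta_i=(\nu/2)^{1/\alpha}(s_{i+1}-s_i)^{\beta/\alpha}\xi_i$ to extract $\widetilde{C}$ and a power of $s_{i+1}-s_i$, the Dirichlet formula on the simplex $T_n(t)$ to produce $\Gamma(n\theta+1)^{-1}$, and Mittag--Leffler asymptotics. One caveat: the ``key Mittag--Leffler identity'' you invoke is not correct as written and is not the mechanism actually used. The constant in $\int_0^\infty w^{2\beta-2}E_{\beta,\beta}^2(-\tfrac{\nu}{2}w^\beta r)\,\ud w$ is $\beta^{-1}(2/\nu)^{(2\beta-1)/\beta}\int_0^\infty v^{1-1/\beta}E_{\beta,\beta}^2(-v)\,\ud v$, not $C_*=\Gamma(2\beta-1-\beta\kappa/\alpha)$ (note that $\kappa$ cannot appear in that integral at all); moreover, integrating each time increment over the whole half-line would forfeit the simplex constraint that is the sole source of the factor $\Gamma(n\theta+1)^{-1}$. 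The correct order of operations is: do only the frequency integral by scaling, which yields $\widetilde{C}\,(2/\nu)^{\kappa/\alpha}(s_{i+1}-s_i)^{-\beta\kappa/\alpha}$ per coordinate, and then evaluate $\int_{T_n(t)}\prod_i(s_{i+1}-s_i)^{2\beta-2-\beta\kappa/\alpha}\,\ud s$ exactly by the Beta/Dirichlet formula --- that is where $C_*^n/\Gamma(n\theta+1)$ comes from. Since you separately mention the Beta-type reduction, this is repairable, but as written the two steps are conflated.

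The genuine gap is in the lower bound. When $\gamma=\delta_0$ the second moment of the $n$-th chaos is an exact multiple integral in which, after the substitution $\eta_i=\xi_1+\cdots+\xi_i$ (needed to diagonalize the Mittag--Leffler factors), the Riesz weights become $\prod_i|\eta_i-\eta_{i-1}|^{\kappa-d}$: consecutive frequencies are coupled, and ``reversing the scaling step'' does not apply because in the upper bound the scaling was performed only \emph{after} a decoupling inequality (the supremum over $\eta$ in \eqref{E_:YNonNeg}) that cannot be reversed. Your proposed ``truncation of the $\xi$-integral to a dominant annulus'' does not address this coupling and is not a valid substitute. The paper's device is to restrict every original variable $\xi_i$ to the positive orthant $\RR_+^d$ --- costing exactly the factor $2^{-d}$ per variable that turns $(2\pi)^{-d}$ into the $(4\pi)^{-d}$ appearing in your $A'$ --- and to observe that on the orthant $|\xi_i|\le|\xi_1+\cdots+\xi_i|=|\eta_i|$, hence $|\eta_i-\eta_{i-1}|^{\kappa-d}=|\xi_i|^{\kappa-d}\ge|\eta_i|^{\kappa-d}$ because $\kappa<d$. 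This decouples the integral from below, after which the scaling and the Dirichlet formula apply verbatim. Without this (or an equivalent) decoupling step, your term-by-term lower bound $n!\|f_n\|^2\ge c\,u_0^2(A')^nt^{n\theta}/\Gamma(n\theta+1)$ is unjustified.
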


The proof of this theorem is given in Section  \ref{Sec:Rietz}.
The same method can be used to obtain  the moment bound for the solution to
the smoothed equation \eref{E:SPDE-I}.

\begin{remark} When  $\beta=1$ and $\alpha=2$, equation \eqref{E:SPDE} is reduced to the
multiplicative stochastic heat equation   (1.1)  considered in \cite{HHNT}.  In this case % both  precise
%upper and lower  bounds are obtained (see \cite{HHNT} Theorem 6.1, inequality (6.1)).    % If we let
%  {$\beta=1$ and $\alpha=2$} %{\color {red} Is this $\beta=1$, $\alpha = 2$?}  {!!Yes.}, then
the exponent of $p$ in \eqref{e.p-moment} becomes 
\[
\frac{2\alpha\beta-\beta \kappa}{2\alpha \beta -\alpha -\beta \kappa}=\frac{4-\kappa}{2-\kappa}\:,
% \text{  {}
\]
which is the same as in \cite[ Theorem 6.1, inequality (6.1)]{HHNT} (with $\kappa=a$). If we assume $\ga(t)=t^{-\tilde \beta}$,
then $C_t=Ct^{-\tilde \beta+1}$.  The exponent  of $t$ in \eqref{e.p-moment} is
\[
1+(-\tilde \beta+1)\left(\frac{\alpha}{2\alpha \beta- \alpha -\beta \kappa}\right)=\frac{4-2\tilde \beta-\kappa}{2-\kappa},
\]
which is the same exponent of $t$ as in \cite{HHNT}, inequality (6.1).  Hence, we conjecture that
the bound \eqref{e.p-moment} is sharp.
\end{remark}

\begin{theorem}
Assume the conditions (1) and (2) of Theorem \ref{T:Rietz}, and assume
\begin{enumerate}
 \item[(3')] $\Lambda(x)=|x|^{-\kappa}$, $x\in\RR^d$ with
\[
  {0< \kappa }< \min (\alpha/\beta, d).
\]
\end{enumerate}
Then the solution $u(t,x)$ to the smoothed equation \eqref{E:SPDE-I} satisfies that for all $p \geq 1$,
\begin{equation}
\EE\left[ u(t,x)^p\right] \leq C^p \widehat{C}_t^p \exp \left(t \: \left[C_{\kappa} C_t \bar{C} C_{\#} (2/\nu)^{\kappa/\alpha} \right]^{\frac{\alpha}{2 \alpha \Ceil{\beta} -\alpha -\beta \kappa }} p^{\frac{2\alpha \Ceil{\beta} -\beta \kappa}{2 \alpha \Ceil{\beta}- \alpha -\beta \kappa}} \right)\,,
\end{equation}
where $C=C(\alpha,\beta,\kappa)>0$,   {$\widehat{C}_t$ is defined in \eqref{E:HatCt},}
\[
C_{\#} = \Gamma(2 \Ceil{\beta}-1-\beta \kappa/\alpha)
\quad\text{and}\quad
\bar{C} = \int_{\RR^d} E_{\beta,\Ceil{\beta}} (- |\eta|^{\alpha}) |\eta|^{\kappa-d} d\eta,
\]
and $C_{\kappa}$ is as defined in Theorem \ref{T:Rietz}.
In particular, if $\gamma$ is the Dirac delta function
and if the initial data $u_0(x)\equiv u_0>0$ is a constant and $u_1\equiv 0$ when $\beta>1$, then for
some constant $c=c(\alpha,\beta,\kappa)>0$,
\begin{equation}
\EE \left[|u(t,x)|^2\right] \geq c \:   {u_0^2} \exp \left(  t\: \left[C_{\kappa} \bar{C} (4\pi)^{-d} C_{\#} (2/\nu)^{\kappa/\alpha} \right]^{\frac{1}{2\beta -1 -\beta \kappa/\alpha} } \right)\,.
% \quad \text{for all $t>0$ and $x\in\RR^d$},
\end{equation}
\end{theorem}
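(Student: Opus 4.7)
The proof follows the same lines as that of Theorem \ref{T:Rietz}, with $Y$ replaced by $Z$ throughout and with the key time integral \eqref{E:CxiI} taking the place of its $E_{\beta,\beta}^2$ counterpart. Under (1') the kernel $Z$ is nonnegative, and Theorem \ref{T:PDE} gives $\cF Z(t,\cdot)(\xi)=t^{\Ceil{\beta}-1}E_{\beta,\Ceil{\beta}}(-2^{-1}\nu t^\beta|\xi|^\alpha)$. Iterating \eqref{E:mild-I} produces the chaos expansion $u(t,x)=\sum_n I_n(f_n)$ where $f_n$ is the symmetrization of
\[
\tilde{f}_n(s_1,y_1,\dots,s_n,y_n)=J_0(s_1,y_1)\prod_{j=1}^{n}Z(s_{j+1}-s_j,\,y_{j+1}-y_j)\,\mathbf{1}_{\{s_1<\dots<s_n<t\}},
\]
with the convention $s_{n+1}:=t$, $y_{n+1}:=x$. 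Condition (5) of Theorem \ref{T:ExUni} yields $|J_0|\le\widehat{C}_t$, so each $|f_n|$ is dominated pointwise by $\widehat{C}_t$ times the kernel obtained by setting $J_0\equiv 1$.

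To compute $n!\|f_n\|^2_{\cH^{\otimes n}}$ I would pass to Fourier variables using Parseval, so that the chain of $Z$-convolutions becomes a product of Mittag-Leffler factors integrated against $\mu(\ud\xi_j)=C_\kappa|\xi_j|^{\kappa-d}\ud\xi_j$. The innermost time integral
\[
\int_0^{w} r^{2(\Ceil{\beta}-1)}E_{\beta,\Ceil{\beta}}^2(-2^{-1}\nu r^\beta|\xi|^\alpha)\,\ud r
\]
is bounded using \eqref{E:CxiI} by $(2/\nu)^{(2\Ceil{\beta}-1)/\beta}\,C_{\#}\,|\xi|^{-\alpha(2\Ceil{\beta}-1)/\beta}/\beta$, and the rescaling $\xi=(2/\nu)^{1/\alpha}t^{-\beta/\alpha}\eta$ in the spatial variable converts the Riesz-weighted integral into $\bar{C}$ up to the factor $(2/\nu)^{\kappa/\alpha}$. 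Proceeding recursively as in the proof of Theorem \ref{T:Rietz} and in \cite{HHNT} then produces the Mittag-Leffler-type bound
\[
n!\,\|f_n\|^2_{\cH^{\otimes n}}\le \widehat{C}_t^{\,2}\,\frac{\bigl[C_\kappa\,C_t\,\bar{C}\,C_{\#}\,(2/\nu)^{\kappa/\alpha}\,(2\pi)^{-d}\bigr]^n\,t^{n(2\Ceil{\beta}-1-\beta\kappa/\alpha)}}{\Gamma\!\left(n(2\Ceil{\beta}-1-\beta\kappa/\alpha)+1\right)}.
\]
The hypothesis $0<\kappa<\min(\alpha/\beta,d)$ is exactly what is needed to ensure that $\bar{C}<\infty$ and that the Riesz spatial integral converges at the origin.

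For the $p$-th moment I would invoke Gaussian hypercontractivity, $\EE[|I_n(f_n)|^p]^{1/p}\le(p-1)^{n/2}\sqrt{n!}\,\|f_n\|_{\cH^{\otimes n}}$, insert the above bound, sum the resulting Mittag-Leffler-type series, and use Stirling to optimize in $n$; this reproduces the exponents on $p$ and $t$ displayed in the theorem, with $\Ceil{\beta}$ taking the role played by $\beta$ in Theorem \ref{T:Rietz}. For the second-moment lower bound, under $u_0(x)\equiv u_0>0$ and $u_1\equiv 0$ one computes (using $\cF Z(t,\cdot)(0)=t^{\Ceil{\beta}-1}/\Gamma(\Ceil{\beta})$ and Lemma \ref{L:J0}) that $J_0(t,x)\equiv u_0$, so every $f_n$ is pointwise nonnegative and
\[
\EE[|u(t,x)|^2]=\sum_{n=0}^\infty n!\,\|f_n\|^2_{\cH^{\otimes n}}\ge u_0^2\sum_{n=0}^\infty n!\,\|g_n\|^2_{\cH^{\otimes n}},
\]
where $g_n$ denotes the kernel with $J_0\equiv 1$; a standard Tauberian/asymptotic analysis of the resulting Mittag-Leffler series yields the claimed exponential rate with exponent $1/(2\beta-1-\beta\kappa/\alpha)$. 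The main obstacle is the careful bookkeeping in the recursive scaling argument so that the precise power of $t$ appears in the Gamma denominator and the constants $C_\#$, $\bar{C}$, $(2/\nu)^{\kappa/\alpha}$ emerge in the right positions; all subsequent optimizations and asymptotics then transfer verbatim from Section \ref{Sec:Rietz}.
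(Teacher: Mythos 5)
Your proposal is correct and takes essentially the same route as the paper, which in fact gives no separate argument for this theorem but states that it is a line-by-line modification of the proof of Theorem \ref{T:Rietz} with $Y$ replaced by $Z$, $\mathcal{F}Z(t,\cdot)(\xi)=t^{\Ceil{\beta}-1}E_{\beta,\Ceil{\beta}}(-2^{-1}\nu t^\beta|\xi|^\alpha)$ in place of $\mathcal{F}Y$, and the constants $\widetilde{C},C_*$ replaced by $\bar{C},C_{\#}$; your displayed bound for $n!\,\|f_n\|^2_{\cH^{\otimes n}}$ and the subsequent hypercontractivity/Mittag--Leffler steps are exactly what that modification yields. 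One small internal inconsistency: you attribute $C_{\#}=\Gamma(2\Ceil{\beta}-1-\beta\kappa/\alpha)$ to the full-line time integral \eqref{E:CxiI}, but if you literally bound $\int_0^w$ by $\int_0^\infty$ and extract $|\xi|^{-\alpha(2\Ceil{\beta}-1)/\beta}$ first, the remaining Riesz-weighted spatial integral $\int|\xi|^{\kappa-d-\alpha(2\Ceil{\beta}-1)/\beta}\ud\xi$ diverges at the origin (since $\kappa<\alpha/\beta$); the correct order, which is the one used for Theorem \ref{T:Rietz} and which your final displayed estimate implicitly follows, is to rescale the spatial variable in each factor first (producing $\bar{C}\,(2/\nu)^{\kappa/\alpha}(s_{i+1}-s_i)^{-\beta\kappa/\alpha}$) and only then evaluate the simplex time integral via Lemma \ref{stint} (equivalently \cite[Lemma~4.5]{HHNT}), which is where $C_{\#}^n/\Gamma(n(2\Ceil{\beta}-1-\beta\kappa/\alpha)+1)$ actually comes from.
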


The proof of this theorem is a line-by-line change of the proof of Theorem \ref{T:Rietz},
and we leave it to the interested reader.

\subsection{Case $1<\beta<2$ and $d\ge 2$}\label{Subsec:1beta2}

When $1<\beta<2$ and $\al\not=2$,  we could not   show the nonnegativity of $Y(t, x)$ for high dimension ($d\ge 2$)
(see Theorem \ref{T:PDE} (b)). However,  with a slightly different approach,
it is possible to obtain  similar results 
to Theorem \ref{T:ExUni} for the Riesz kernel case. 

\begin{theorem}\label{beta.2}
Assume the conditions (2), (3) and (5) of Theorem \ref{T:ExUni}, and assume 
\begin{enumerate}
% \item $\beta \in (1/2,2)$ and $\alpha\in (0,2]$;
% \item $\gamma(t)$ is a nonnegative locally integrable function.
\item[(4')] $\Lambda(x)=|x|^{-\kappa}$, $x\in\RR^d$ with
\[
0<\kappa< \min (2\alpha-\alpha/\beta, d).
\]
% \item The initial conditions are such that for all $t> 0$,
%  \begin{align}\label{E:HatCta}
% \widehat{C}_t:=\sup_{y\in\RR^d,\: s\in[0,t]} |J_0(s,y)|<+\infty.
% \end{align}
\end{enumerate}
Then relation \eref{eq: L2 chaos} holds for each $(t,x)$.
Consequently, equation \eref{E:SPDE} admits a unique mild solution in the sense of Definition \ref{D:Sol}.
\end{theorem}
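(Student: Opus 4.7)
The plan is to follow the Wiener chaos expansion strategy of Theorem \ref{T:ExUni}, but to bypass the nonnegativity of $Y_{\alpha,\beta,d}$ in the fast-diffusion regime. The key extra input supplied by assumption (4') is that for the Riesz kernel $\Lambda(x)=|x|^{-\kappa}$ the associated spectral measure $\mu(\ud\xi)=C_\kappa|\xi|^{\kappa-d}\ud\xi$ is absolutely continuous, so the spatial pairing reduces to a weighted $L^2$ Parseval identity
\[
\int_{\RR^{2d}} f(x)g(y)|x-y|^{-\kappa}\ud x\ud y=
\frac{C_\kappa}{(2\pi)^d}\int_{\RR^d}\mathcal{F} f(\xi)\overline{\mathcal{F} g(\xi)}|\xi|^{\kappa-d}\ud\xi,
\]
valid for arbitrary (possibly sign-changing, complex) $f,g$ whenever both sides converge, and crucially \emph{without} requiring nonnegativity of $f$ or $g$. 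This replaces the pointwise use of \eqref{Davar id} on $|f_n|^2$ that required $Y\ge 0$ in the proof of Theorem \ref{T:ExUni}.

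First I would write down the Picard kernel: with $s_{n+1}=t$ and $y_{n+1}=x$, set
\[
\widetilde f_n(s_1,y_1,\ldots,s_n,y_n):=J_0(s_1,y_1)\prod_{j=1}^n Y(s_{j+1}-s_j,\,y_{j+1}-y_j)\,\mathbf{1}_{\{0<s_1<\cdots<s_n<t\}},
\]
and let $f_n$ be its symmetrization in the pairs $(s_j,y_j)$. Taking the spatial Fourier transform and using \eqref{E:FY} gives an explicit product of Mittag-Leffler factors $E_{\beta,\beta}\bigl(-2^{-1}\nu(s_{j+1}-s_j)^\beta|\eta_j|^\alpha\bigr)$ in suitable linear combinations $\eta_j$ of the frequency variables $\xi_1,\ldots,\xi_n$ coming from the convolutional structure of the $Y$'s. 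Iterating the Parseval identity above in each of the $n$ spatial coordinates converts $\|f_n\|_{\mathcal H^{\otimes n}}^2$ into a time--frequency integral that is structurally identical to the one estimated in the second-moment computation of Theorem \ref{T:Rietz}. Running exactly that computation---using the boundedness $|J_0|\le\widehat C_t$ from assumption (5), the local integrability of $\gamma$ on the time simplex to produce a $C_t^n/n!$ factor, and the convergence of the spatial integral implied by $\kappa<2\alpha-\alpha/\beta$---yields a bound of the form $n!\,\|f_n\|_{\mathcal{H}^{\otimes n}}^2\le \widehat C_t^{\,2}\,A^n t^{an}/n!$ for some $A,a>0$, and summing over $n$ establishes \eqref{eq: L2 chaos}.

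The main obstacle is the combinatorial bookkeeping of the symmetrization. Since $Y$ may change sign, one cannot pull absolute values inside the $n!$-fold sum and directly use a nonnegative comparison kernel; instead, expanding
\[
\|f_n\|_{\mathcal{H}^{\otimes n}}^2=\frac{1}{(n!)^2}\sum_{\sigma,\sigma'\in S_n}\langle\widetilde f_{n,\sigma},\widetilde f_{n,\sigma'}\rangle_{\mathcal{H}^{\otimes n}}
\]
produces a double sum over $S_n\times S_n$ whose off-diagonal terms involve mismatched permutations of the Mittag-Leffler exponents and of the linear combinations of the $\xi_j$'s. The natural remedy is a Cauchy--Schwarz inequality on the permutation sum, which collapses the double sum to $n!$ copies of the diagonal ($\sigma=\sigma'$) term while preserving the $1/n!$ gain from the time-ordered simplex; the diagonal term then has exactly the product structure present in the nonnegative case and can be estimated as in the proof of Theorem \ref{T:Rietz}. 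Because the argument never invokes $Y\ge 0$, it works for all $d\ge 2$ in the fast-diffusion regime $1<\beta<2$, giving the existence and uniqueness claim.
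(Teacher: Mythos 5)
Your plan mislocates where the nonnegativity of $Y$ actually enters the argument of Theorem \ref{T:ExUni}, and so the fix you propose does not address the real obstruction. The Parseval identity \eqref{Davar id} is already valid for signed integrands, so nothing is gained by restating it. The place where $Y\ge 0$ is genuinely used is \eqref{E_:YNonNeg}: because $\mathcal F g_n(s,\cdot,t,x)(\xi)$ is a product of factors $E_{\beta,\beta}\bigl(-2^{-1}\nu(s_{\sigma(i+1)}-s_{\sigma(i)})^{\beta}|\xi_{\sigma(i)}+\cdots+\xi_{\sigma(1)}|^{\alpha}\bigr)$ with \emph{coupled} frequency arguments, one must control
\[
\sup_{\eta\in\RR^d}\left|\int_{\RR^d}\bigl(Y(s,\cdot)*Y(s,\cdot)\bigr)(x)\,e^{i\eta\cdot x}\,\Lambda(x)\,\ud x\right|
=\sup_{\eta\in\RR^d}\int_{\RR^d}\bigl|\mathcal F Y(s,\cdot)(\xi+\eta)\bigr|^{2}\mu(\ud\xi),
\]
and bounding this supremum by its value at $\eta=0$ is exactly what requires $(Y*Y)\Lambda\ge 0$. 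Your diagonal term after the permutation Cauchy--Schwarz still contains the same coupled arguments $|\xi_{1}+\cdots+\xi_{i}|^{\alpha}$; after the substitution $\eta_i=\xi_1+\cdots+\xi_i$ the spectral measure becomes $\prod_i|\eta_i-\eta_{i-1}|^{\kappa-d}\ud\eta_i$, which is not a product measure, so the claim that the diagonal ``has exactly the product structure present in the nonnegative case'' is false. Moreover the upper-bound computation of Theorem \ref{T:Rietz} that you want to run ``exactly'' itself starts from \eqref{E:Fgn}, i.e.\ it already presupposes $Y\ge 0$; the decoupling inequality is only proved in the reverse direction there, for the lower bound. The permutation bookkeeping you single out as the main obstacle is comparatively harmless (the kernels are time-ordered to begin with), so your Cauchy--Schwarz over $S_n\times S_n$ solves a non-problem while the actual gap remains.

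For comparison, the paper's proof of Theorem \ref{beta.2} avoids Fourier space for the spatial pairing altogether. It first derives the pointwise bound $|Y(t,x)|\le C\,t^{\beta-1-\beta d/\alpha}\,\Theta(x/t^{\beta/\alpha})$ with $\Theta(x)=(|x|^{\alpha+d}+|x|^{d-\zeta\alpha})^{-1}$ from the asymptotics of the Fox H-function at $0$ and $\infty$ (Lemma \ref{est.H}), then proves the real-space estimate
\[
\int_{\RR^{2d}}|Y(s,x_1-x_2)Y(r,y_1-y_2)|\,|x_1-y_1|^{-\kappa}\,\ud x_1\ud y_1\le C\,(sr)^{\theta},\qquad \theta=\beta-1-\tfrac{\beta\kappa}{2\alpha},
\]
uniformly in the shifts (Lemmas \ref{basic.ineq-1} and \ref{basic.ineq}). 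A Cauchy--Schwarz with respect to the positive-definite kernel $\prod_i\Lambda(y_i-z_i)$ in the spatial variables then decouples the two time arguments so that $\gamma$ can be integrated out, and Lemma \ref{stint} yields the $1/\Gamma((2\theta+1)n+1)$ decay under $\theta>-1/2$, i.e.\ $\kappa<2\alpha-\alpha/\beta$. If you insist on a Fourier-side route you would need an additional ingredient, e.g.\ a radially decreasing majorant of $|\mathcal F Y(s,\cdot)(\xi)|^{2}$ (such as $Cs^{2\beta-2}(1+2^{-1}\nu s^{\beta}|\xi|^{\alpha})^{-2}$) together with a rearrangement inequality to handle the supremum over $\eta$; as written, your argument has a genuine gap at that step.
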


  {This theorem is proved in Section \ref{Sec:1beta2}.}

\begin{remark} It is easy to see that  the condition
$\Lambda(x)=|x|^{-\kappa}$  with
$ 0<\kappa< 2\alpha-\alpha/\beta$ implies Dalang's condition  \eqref{E:DLcond}. Condition $\kappa<d$
is to guarantee that $\La$ is a positive  function.
\end{remark}

\section{Fox H-functions: Some proofs}
\subsection{Proof of Theorem \ref{T:PDE}}\label{Sec:PDE}
The proof of Theorem \ref{T:PDE} will be based on following  lemmas.

\begin{lemma}\label{L:FZ}
The function $Z_{\alpha,\beta,d}(t,x)$ has the Fourier transform given by \eqref{E:FZ}.
% \begin{equation}\label{E:FZ}
% \mathcal{F}Z_{\alpha,\beta}(t,\cdot)(\xi)=
% t^{\Ceil{\beta}-1} E_{\beta,\Ceil{\beta}}(-\nu t^\beta |\xi|^\alpha/2)=
% t^{\Ceil{\beta}-1}\FoxH{1,1}{1,2}{\nu t^\beta |\xi|^\alpha/2}{(0,1)}{(0,1),\:(1-\Ceil{\beta},\beta)}\,.
% \end{equation}
\end{lemma}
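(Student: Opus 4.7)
The plan is to compute $\cF Z(t,\cdot)(\xi)$ directly from the Mellin--Barnes definition of the Fox H-function. Since $Z(t,x)$ depends on $x$ only through $|x|$, its Fourier transform is radial in $\xi$, and the standard reduction for radial functions on $\RR^d$ gives
\[
\cF Z(t,\cdot)(\xi)=(2\pi)^{d/2}|\xi|^{1-d/2}\int_0^\infty Z(t,re_1)\,r^{d/2}\,J_{(d-2)/2}(|\xi|r)\,\ud r,
\]
where $e_1\in S^{d-1}$ is any fixed unit vector. I would substitute the Mellin--Barnes integral representation of the $H^{2,1}_{2,3}$ factor of $Z$ and interchange the order of integration.

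The inner $r$-integral then becomes a Weber--Schafheitlin integral of the form $\int_0^\infty r^{-d/2-\alpha s}J_{(d-2)/2}(|\xi|r)\,\ud r$, which evaluates explicitly to a Gamma ratio times a power of $|\xi|$. After this substitution, the Gamma factors $\Gamma(d/2+\alpha s/2)$ and $\Gamma(-\alpha s/2)$ coming from the H-function representation cancel against those produced by the Bessel integral, and all powers of $|\xi|$, $2$, $\nu$, $t$ reassemble into $(\nu t^\beta|\xi|^\alpha/2)^{s}$. One is left with
\[
\cF Z(t,\cdot)(\xi)=t^{\Ceil{\beta}-1}\cdot\frac{1}{2\pi i}\int_{\cC}\frac{\Gamma(1+s)\,\Gamma(-s)}{\Gamma(\Ceil{\beta}+\beta s)}\left(\frac{\nu t^\beta|\xi|^\alpha}{2}\right)^{s}\ud s.
\]

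To finish, I would close the contour $\cC$ to the right and sum residues at the simple poles $s=n$, $n\ge 0$, of $\Gamma(-s)$. Using $\mathrm{Res}_{s=n}\Gamma(-s)=(-1)^{n+1}/n!$ and $\Gamma(1+n)=n!$, the residue at $s=n$ contributes $(-z)^n/\Gamma(\Ceil{\beta}+\beta n)$ with $z=2^{-1}\nu t^\beta|\xi|^\alpha$, so the sum produces $\sum_{n=0}^\infty (-z)^n/\Gamma(\Ceil{\beta}+\beta n)=E_{\beta,\Ceil{\beta}}(-z)$, yielding exactly \eqref{E:FZ}.

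The main technical obstacle is justifying the interchange of the $s$- and $r$-integrals (Fubini) and the closure of the Mellin contour to the right. Both reduce to Stirling-type asymptotics of the H-function integrand: sufficient decay in $|\mathrm{Im}(s)|$ along a vertical contour, and decay in the right half-plane as $\mathrm{Re}(s)\to+\infty$. These decay properties hold in the parameter range $\beta\in(1/2,2)$, $\alpha\in(0,2]$ and are standard consequences of the H-function asymptotics collected in Appendix \ref{Sec:H}.
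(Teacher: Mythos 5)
Your computation is correct, but it follows a genuinely different route from the paper's proof of this lemma. The paper proves Lemma \ref{L:FZ} without touching the explicit H-function formula at all: since $Z$ is by definition the fundamental solution of \eqref{E:PDE} with Dirac initial data, its spatial Fourier transform satisfies the Caputo-fractional ODE $\partial^\beta \cF Z(t,\cdot)(\xi)=-\tfrac{\nu}{2}|\xi|^\alpha \cF Z(t,\cdot)(\xi)$ with $\partial^m\cF Z|_{t=0}=1$, whose unique solution is $I_t^{m}E_\beta(-\nu|\xi|^\alpha t^\beta/2)$, i.e.\ \eqref{E:FZ}; no convergence or Fubini issues arise. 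What you compute instead is the Fourier transform of the explicit expression \eqref{E:Zab}, which in the paper's logical ordering is the content of the separate Lemma \ref{L:ZExp}; there the same radial-transform reduction is used, but the Weber--Schafheitlin/Mellin--Barnes manipulation is packaged into the Kilbas--Saigo convolution and Hankel-transform theorems (Theorems \ref{T:HConvH} and \ref{T:Hankel}), whose hypotheses ($a^*=2-\beta>0$, the conditions \eqref{E:HConvH}, and the constraints on the contour abscissa) are exactly the rigorous form of the ``sufficient decay'' you defer to at the end. Your Gamma-cancellation, the reassembly into $(\nu t^\beta|\xi|^\alpha/2)^s$, and the residue sum at $s=n$ giving $E_{\beta,\Ceil{\beta}}$ all check out (note the Bessel integral converges only conditionally, for $\mathrm{Re}(s)$ in a strip such as $(-\min(1,d/\alpha),0)$, so the Fubini step is the one place where a real argument is owed). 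The trade-off: the paper's ODE argument for Lemma \ref{L:FZ} is essentially free but presupposes $Z$ is the abstract fundamental solution; your direct verification proves more in one stroke (it yields Lemmas \ref{L:FZ} and \ref{L:ZExp} together, after identifying \eqref{E:Zab} with the fundamental solution by Fourier inversion), at the cost of the analytic justifications that the H-function machinery exists to supply.
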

\begin{proof}
The proof needs relation \eqref{E:ML-H} between the Mittag-Leffler function and the Fox H-function.
We first note some special cases. The case where $\beta\in(0,1]$, $\alpha=2$ and $d\in\NN$ can be found in \cite[Section 4]{Koc} or \cite{SchWyss89}.
For $\beta\in(0,1]$ and for general $\al$, one can simply replace $|\xi|^2$ by $|\xi|^\alpha$ in the argument  of
\cite[Section 4]{Koc} and then use \eqref{E:ML-H} to obtain \eqref{E:FZ}.
The case where $d=1$, $\beta\in (0,2)$, and $\alpha\in (0,2)$ is proved  by \cite{MLP}.
For the general  case, $Z_{\alpha,\beta,d}$ solves
\[
\begin{cases}
\displaystyle
\left(\partial^\beta  + \frac{\nu}{2} (-\Delta)^{\alpha/2} \right) u(t,x)=0, & t>0,\: x\in\R,\\
u(0,x)=\delta_0(x),&\text{if $m\in (0,1)$,}\\
u(0,x) = 0,\quad \left.\frac{\partial}{\partial t} u(t,x)\right|_{t=0}= \delta_0(x),&\text{if $m\in [1,2)$.}
\end{cases}
\]
Hence, the Fourier transform of $Z_{\alpha,\beta,d}$ satisfies
\[
\partial^\beta  \cF Z(t,\cdot)(\xi)= -\frac{\nu}{2} |\xi|^\alpha \cF Z(t,\cdot)(\xi),\quad
\left.\frac{\partial^m}{\partial t^m} \cF Z(t,\cdot)(\xi) \right|_{t=0}= 1.
\]
This equation can be solved explicitly (see, e.g., \cite[Theorem 7.2, on p. 135]{Die04}) as
\[
\cF Z(t,\cdot)(\xi) = I_t^{m} E_{\beta}(-\nu |\xi|^\alpha t^\beta/2 ),
\]
which gives immediately \eqref{E:FZ} when $m=0$. When $m=1$,
the integral can be evaluated by \cite[(1.99)]{Podlubny99FDE} to
give
\[
\cF Z(t,\cdot)(\xi) =t E_{\beta,2}(-\nu |\xi|^2 t^\beta/2).
\]
This completes the proof of Lemma \ref{L:FZ}.
\end{proof}

\begin{lemma}\label{L:ZExp}
 The function $Z_{\alpha,\beta,d}(t,x)$ can be expressed in \eqref{E:Zab}.
\end{lemma}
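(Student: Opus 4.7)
The plan is to prove \eqref{E:Zab} by inverting the Fourier transform \eqref{E:FZ} from Lemma \ref{L:FZ} via Fox H-function calculus. Since $\cF Z(t,\cdot)(\xi)=t^{\Ceil{\beta}-1}E_{\beta,\Ceil{\beta}}(-2^{-1}\nu t^\beta|\xi|^\alpha)$ is radial in $\xi$, the first step is to apply Bochner's formula for the inverse Fourier transform of a radial function, reducing $Z(t,x)$ to a one-dimensional Hankel integral against $J_{d/2-1}(r|x|)\,r^{d/2}\,dr$. This already isolates the spatial dependence as a function of $|x|$, matching the $|x|^{-d}$ prefactor anticipated in \eqref{E:Zab}.

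Next, I would substitute the Mellin-Barnes representation of the Mittag-Leffler function already used in the proof of Lemma \ref{L:FZ} (namely \eqref{E:ML-H}), that is, $E_{\beta,\Ceil{\beta}}(-z)=\frac{1}{2\pi i}\int_{\mathcal{C}}\frac{\Gamma(s)\Gamma(1-s)}{\Gamma(\Ceil{\beta}-\beta s)}z^{-s}\,ds$, with the vertical contour $\mathcal{C}$ chosen in a strip that gives absolute convergence of the resulting double integral. Interchanging the order of integration and evaluating the inner radial integral by the classical Weber-Schafheitlin/Mellin-Bessel formula produces a Gamma factor together with an explicit power $2^{-\alpha s}|x|^{\alpha s-d}$. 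After the change of variable $s\mapsto\alpha s$ on the Mellin contour, the remaining integrand is a ratio of four Gamma factors in the variable $|x|^\alpha/(2^{\alpha-1}\nu t^\beta)$, which matches the definition of the Fox H-function $H^{2,1}_{2,3}$ (see the appendix) with exactly the parameter list displayed in \eqref{E:Zab}.

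The main obstacle will be the bookkeeping of Gamma factors and the correct identification of the pairs $(d/2,\alpha/2)$, $(1,1)$, $(1,\alpha/2)$ in the bottom row of the H-function: in particular, the simultaneous appearance of $(d/2,\alpha/2)$ and $(1,\alpha/2)$, together with the precise power $2^{\alpha-1}$ in the denominator of the argument, will require the Gauss-Legendre duplication formula \eqref{E:GGPI} to rearrange the Gamma factor coming from the Bessel integral together with the $2^{-\alpha s}$ power. The extraction of the overall $\pi^{-d/2}$ prefactor arises from the same identity.

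An alternative, and possibly cleaner, route would avoid the Bessel calculation altogether: one writes $E_{\beta,\Ceil{\beta}}(-y)$ as a Wright-type subordination integral $\int_0^\infty e^{-yu}\phi(u)\,du$ for a suitable density $\phi$, and for each fixed $u$ uses Theorem \ref{T:Stable} to identify $(\cF)^{-1}[e^{-cu|\xi|^\alpha}](x)$ with a scaled spherically symmetric $\alpha$-stable density in explicit $H^{1,1}_{1,2}$ form; the Mellin convolution property of Fox H-functions then assembles \eqref{E:Zab} directly. In either route, once the duplication formula has been applied, the parameter identification is forced and unambiguous, and convergence of the Mellin integrals in the relevant strip is routine from the known asymptotics of $E_{\beta,\Ceil{\beta}}$.
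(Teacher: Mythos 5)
Your route runs in the opposite direction from the paper's. The paper does \emph{not} invert \eqref{E:FZ}; it remarks explicitly that this is harder, and instead computes the \emph{forward} Fourier transform of the candidate expression \eqref{E:Zab} — via a cosine/Fox H-function convolution (Theorem \ref{T:HConvH}) for $d=1$ and via the Hankel-transform formula for H-functions (Theorem \ref{T:Hankel}) for $d\ge 2$ — and checks that it reduces, through Properties \ref{Prop:Power} and \ref{Prop:Reduction}, to the H-function form \eqref{E:ML-H} of $t^{\Ceil{\beta}-1}E_{\beta,\Ceil{\beta}}(-2^{-1}\nu t^{\beta}|\xi|^{\alpha})$. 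Your direct Mellin--Barnes inversion is the classical derivation (it is essentially how Schneider--Wyss and Mainardi--Luchko--Pagnini obtain such formulas), and the Gamma bookkeeping does come out right: the Mellin transform of $J_{d/2-1}$ produces the factor $\Gamma(d/2-\alpha s/2)/\Gamma(\alpha s/2)$ directly, which together with $\Gamma(s)\Gamma(1-s)/\Gamma(\Ceil{\beta}-\beta s)$ is exactly the $\mathcal{H}^{2,1}_{2,3}$ kernel of \eqref{E:Zab}, and the prefactor $\pi^{-d/2}$ falls out of $(2\pi)^{-d/2}\cdot 2^{d/2}$ without needing the duplication formula \eqref{E:GGPI}.

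The genuine gap is your convergence claim. Since $E_{\beta,\Ceil{\beta}}(-2^{-1}\nu t^{\beta}r^{\alpha})$ decays only like $r^{-\alpha}$ as $r\to\infty$, the Bochner/Hankel integral you start from is not absolutely convergent once $d>\alpha$, and after inserting the Mellin--Barnes representation there is in general \emph{no} vertical strip giving absolute convergence of the double integral: absolute convergence of $\int_0^{\infty}r^{d/2-\alpha s}J_{d/2-1}(r|x|)\,\ud r$ at infinity forces $\Re(s)>(d+1)/(2\alpha)$, while the contour for $E_{\beta,\Ceil{\beta}}$ must lie in $0<\Re(s)<1$; these are incompatible already for $\alpha=2$, $d\ge 3$. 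So the interchange of integrals cannot be justified by Fubini as you propose; one needs conditional convergence plus Abel regularization, analytic continuation in an auxiliary parameter, or a distributional reading of the inverse transform. This is precisely the technicality the paper's reversed argument avoids: the H-function $Z(t,\cdot)$ decays rapidly, so Theorem \ref{T:Hankel} applies under the single condition $a^{*}=2-\beta>0$. Your plan is salvageable, but the "routine" convergence step is the part that actually requires work.
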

\begin{proof}
Following Lemma \ref{L:FZ}, we need to compute the inverse Fourier transform of
\eqref{E:FZ}.
Instead of finding the inverse Fourier transform, it turns out that it is easier to
verify  that the Fourier transform
of \eqref{E:Zab} is equal to the right hand side of \eqref{E:FZ}.  Let now $Z$ be defined by
\eqref{E:Zab}.

{\bigskip\noindent\bf Case I $d=1$.}\quad
Notice that $x\mapsto Z(t,x)$ is an even function. We have that
\[
\cF Z(t,\cdot)(\xi)=2 \pi^{-1/2}t^{\Ceil{\beta}-1} \int_0^\infty \ud x\:
x^{-1}
\FoxH{2,1}{2,3}{\frac{ x^\alpha}{2^{\alpha-1}\nu t^\beta}}{(1,1),\:(\Ceil{\beta},\beta)}{(1/2,\alpha/2),\:(1,1),\:(1,\alpha/2)}
\cos(x \xi).
\]
Write the $\cos(\cdot)$ function in the Fox H-function form by \eqref{E:cos} and then apply
Property \ref{Prop:Power} to both Fox H-functions and Property \ref{Prop:frac} to the Fox H-function coming from $\cos(\cdot)$:
\begin{align*}
\cF Z_{\alpha,\beta}(t,\cdot)(\xi)= \alpha^{-1} t^{\Ceil{\beta}-1}\int_0^\infty \frac{\ud x}{x}\: &
\FoxH{2,1}{2,3}{\frac{x}{2^{1-1/\alpha}\nu^{1/\alpha} t^{\beta/\alpha}}}
{(1,1/\alpha),\:(\Ceil{\beta},\beta/\alpha)}{(1/2,1/2),\:(1,1/\alpha),\:(1,1/2)}\\
&\times \FoxH{0,1}{2,0}{\frac{2}{x |\xi|}}{(1,1/2),\;(1/2,1/2)}{\midrule }.
\end{align*}
Now we will apply Theorem \ref{T:HConvH}.
Notice that both condition (1):
$a_1^*=(2-\beta)/\alpha>0$, $a_2^*=0$, $\Delta_2=-1\ne 0$,
and condition \eqref{E:HConvH}: 
\[
A_1=0, \quad A_2 =\infty,\quad B_1=\min(1,\alpha),\quad B_2=0,
\]
of Theorem \ref{T:HConvH} hold. 
Hence, Theorem \ref{T:HConvH} implies that
\[
\cF Z(t,\cdot)(\xi)=
\alpha^{-1}t^{\Ceil{\beta}-1}
\FoxH{2,2}{4,3}{\frac{1}{2^{-1/\alpha}\nu^{1/\alpha}t^{\beta/\alpha}}}{(1,1/\alpha),\:(1,1/2),\:(1/2,1/2),\:(\Ceil{\beta},\beta/\alpha)}{(1/2,1/2),\: (1,1/\alpha),\:(1,1/2)}.
\]
Then apply Properties \ref{Prop:Power} and \ref{Prop:Reduction} to simplify the above expression:
\begin{align*}
\cF Z(t,\cdot)(\xi)
&= t^{\Ceil{\beta}-1}
\FoxH{2,2}{4,3}{2\left(\nu t^{\beta} |\xi|^\alpha\right)^{-1}}{
(1,1),\:(1,\alpha/2),\:(1/2,\alpha/2),\:(\Ceil{\beta},\beta)
}{
(1/2,\alpha/2),\:(1,1),\:(1,\alpha/2)}\\
&=t^{\Ceil{\beta}-1}\FoxH{1,1}{1,2}{2^{-1}\nu t^\beta |\xi|^\alpha}{(0,1)}{(0,1),\:(1-\Ceil{\beta},\beta)}\,.
\end{align*}
% where the second equality  follows from Properties \ref{Prop:Reduction} and \ref{Prop:Power}.
This proves the lemma when $d=1$.

{\bigskip\noindent\bf Case II $d\ge 2$.}\quad
Because the function $x\mapsto Z_{\alpha,\beta,d}(t,x)$ is a radial function,
it is known that (see, e.g., \cite[Theorem 3.3 on p. 155]{SteinWeiss71}),
\[
\cF Z(t,\cdot)(\xi)=2^{d/2} t^{\Ceil{\beta}-1} |\xi|
\int_0^\infty \ud x \:
\FoxH{2,1}{2,3}{\frac{ x^\alpha}{2^{\alpha-1}\nu t^\beta}}{(1,1),\:(\Ceil{\beta},\beta)}{(1/2,\alpha/2),\:(1,1),\:(1,\alpha/2)}
J_{(d-2)/2}(x|\xi|) (|\xi| x)^{-d/2},
\]
where $J_\nu(x)$ is the Bessel function of the first kind.
Then we can apply Theorem \ref{T:Hankel}.
% Corollary 2.5.1 of \cite{KilbasSaigo04H}.
Similar to the previous case, all conditions are satisfied with
the condition $a^*=2-\beta>0$.
Hence,
\begin{align*}
\cF Z(t,\cdot)(\xi) &=
t^{\Ceil{\beta}-1}\FoxH{2,2}{4,3}{2\left(\nu t^{\beta} |\xi|^\alpha\right)^{-1}}{
(1,1),\:(1,\alpha/2),\:(d/2,\alpha/2),\:(\Ceil{\beta},\beta)
}{
(d/2,\alpha/2),\:(1,1),\:(1,\alpha/2)
}\\
&=t^{\Ceil{\beta}-1} \FoxH{1,1}{1,2}{2^{-1}\nu t^\beta |\xi|^\alpha}{(0,1)}{(0,1),\:(1-\Ceil{\beta},\beta)}\,,
\end{align*}
where the second equality is due to Property \ref{Prop:Reduction}.
This completes the proof of Lemma \ref{L:ZExp}.
\end{proof}

Note that the case $d\ge 2$ can be also proved by writing the Bessel function in the Fox H-function form
by \eqref{E:BesselJ} and then applying Theorem \ref{T:HConvH}. We leave the details for interested readers.

\begin{lemma}\label{L:YExp} The fundamental  solutions
$Y_{\alpha,\beta,d}(t,x)$ and $Z^*_{\alpha,\beta,d}(t,x)$ are given
by  \eqref{E:Yab} and \eqref{E:Z*ab}, respectively.
\end{lemma}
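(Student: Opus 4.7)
The strategy parallels that of Lemma \ref{L:ZExp}. Rather than attempting to invert the Fourier transforms \eqref{E:FY} and \eqref{E:FZ*} directly, I will verify that the Fox H-function expressions \eqref{E:Yab} and \eqref{E:Z*ab} have Fourier transforms equal to the right-hand sides of \eqref{E:FY} and \eqref{E:FZ*}; uniqueness of the Fourier transform then yields the identifications.

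For $Y_{\alpha,\beta,d}(t,x)$, I would split into the cases $d=1$ and $d\ge 2$ exactly as in the proof of Lemma \ref{L:ZExp}. In the one-dimensional case I write $\cos(x\xi)$ as a Fox H-function via \eqref{E:cos}, apply Property \ref{Prop:Power} and Property \ref{Prop:frac} to rewrite the two Fox H-functions with compatible parameters, and then appeal to Theorem \ref{T:HConvH}. In the $d\ge 2$ case I express the Fourier transform of the radial function $Y(t,\cdot)$ as a Hankel transform involving $J_{(d-2)/2}$ and apply Theorem \ref{T:Hankel}. The only change from Lemma \ref{L:ZExp} is that the numerator parameter pair $(\Ceil{\beta},\beta)$ is replaced by $(\beta,\beta)$, and the prefactor $t^{\Ceil{\beta}-1}$ becomes $t^{\beta-1}$; none of the applicability conditions ($a^*=2-\beta>0$ for Theorem \ref{T:Hankel}, or $a_1^*=(2-\beta)/\alpha>0$, $a_2^*=0$, $\Delta_2\ne 0$ for Theorem \ref{T:HConvH}) are affected. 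After simplification by Property \ref{Prop:Reduction}, the resulting Fox H-function reduces to
\[
\cF Y(t,\cdot)(\xi) = t^{\beta-1}\FoxH{1,1}{1,2}{2^{-1}\nu t^\beta |\xi|^\alpha}{(0,1)}{(0,1),\:(1-\beta,\beta)},
\]
which equals $t^{\beta-1}E_{\beta,\beta}(-2^{-1}\nu t^\beta|\xi|^\alpha)$ by the Mittag-Leffler representation \eqref{E:ML-H}, matching \eqref{E:FY}.

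For $Z^*_{\alpha,\beta,d}(t,x)$ with $\beta\in(1,2)$, there are two natural routes. The first is to repeat the above computation, now with the pair $(1,\beta)$ replacing $(\Ceil{\beta},\beta)$ and no $t$-prefactor ($t^{1-1}=1$), obtaining
\[
\cF Z^*(t,\cdot)(\xi) = \FoxH{1,1}{1,2}{2^{-1}\nu t^\beta |\xi|^\alpha}{(0,1)}{(0,1),\:(0,\beta)} = E_{\beta}(-2^{-1}\nu t^\beta |\xi|^\alpha),
\]
which is \eqref{E:FZ*}. The second route is to differentiate \eqref{E:Zab} in $t$ using the differentiation formula for Fox H-functions; this requires tracking how the $t^{\Ceil{\beta}-1}=t^1$ prefactor and the Mellin-Barnes argument $|x|^\alpha/(2^{\alpha-1}\nu t^\beta)$ combine under $d/dt$, and then verifying that the resulting Fox H-function simplifies to \eqref{E:Z*ab}.

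The main obstacle is bookkeeping: one must confirm that all of the admissibility hypotheses of Theorem \ref{T:HConvH} and Theorem \ref{T:Hankel} (the positivity of $a^*$, the finiteness of the various $A_i$, $B_i$, and the nonvanishing of the relevant $\Delta_i$) still hold after the parameter substitutions, and also that the final Fox H-function reduces, via Property \ref{Prop:Reduction}, to the advertised Mittag-Leffler expression. Since these quantities depend only on $\alpha$, $\beta$, and $d$, and not on the particular numerator pair being changed, the verifications go through by the same routine calculations used in Lemma \ref{L:ZExp}; I would therefore present only the changes and refer back to Lemma \ref{L:ZExp} for the common steps.
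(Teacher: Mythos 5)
Your computational strategy is sound but genuinely different from the paper's. The paper does not verify Fourier transforms here at all: it invokes the fact (from Section 2 of \cite{EK}) that $Y_{\alpha,\beta,d}$ is the Riemann--Liouville fractional derivative of $Z_{\alpha,\beta,d}$ in $t$ of order $\Ceil{\beta}-\beta$, rewrites \eqref{E:Zab} via Property \ref{Prop:frac} as an H-function of $t^{\beta}/|x|^{\alpha}$, applies the fractional-differentiation formula for H-functions (Theorem \ref{T:RL-D}, valid since $a^{*}=(2-\beta)+(2-\alpha)/2>0$), and simplifies with Properties \ref{Prop:Reduction} and \ref{Prop:Power}; the expression for $Z^{*}=\frac{\ud}{\ud t}Z$ is obtained the same way. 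Your route --- rerunning the Lemma \ref{L:ZExp} computation with $(\Ceil{\beta},\beta)$ replaced by $(\beta,\beta)$, respectively $(1,\beta)$ --- does work, and you are right that the admissibility constants $a^{*}$, $\Delta$, $A_{i}$, $B_{i}$ of Theorems \ref{T:HConvH} and \ref{T:Hankel} depend only on the second entries of the parameter pairs and are therefore unaffected by the substitution. What the paper's route buys is a single one-variable operation in $t$ with no case split on $d$; what yours buys is independence from the citation to \cite{EK} for the relation between $Y$ and $Z$.

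There is, however, one logical gap you must close. You take \eqref{E:FY} and \eqref{E:FZ*} as the \emph{known} Fourier transforms of $Y$ and $Z^{*}$ and propose to match the H-function expressions against them, concluding by uniqueness of the Fourier transform. But in the paper these identities are consequences of \eqref{E:Yab} and \eqref{E:Z*ab}: Lemma \ref{L:FY} is proved after, and from, Lemma \ref{L:YExp}. Citing them as given would be circular. To make your argument self-contained you must first derive \eqref{E:FY} and \eqref{E:FZ*} directly from the definition of $Y$ as the Duhamel kernel in \eqref{E:Duhamel}: taking Fourier transforms in $x$ of \eqref{E:PDE} reduces it to the fractional ODE $\partial^{\beta}v(t)=-\tfrac{\nu}{2}|\xi|^{\alpha}v(t)+g(t)$, whose variation-of-constants kernel is $t^{\beta-1}E_{\beta,\beta}(-2^{-1}\nu t^{\beta}|\xi|^{\alpha})$ --- the exact analogue of the argument used for $Z$ in Lemma \ref{L:FZ}. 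With that step made explicit, your proof is complete.
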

\begin{proof}
We first prove the expression for $Y_{\alpha,\beta,d}$.
By Section 2 of \cite{EK}, we know that $Y_{\alpha,\beta,d}(t,x)$ is
the Riemann-Liouville fractional derivative in $t$ of $Z_{\alpha,\beta,d}(t,x)$ of order $\Ceil{\beta}-\beta$.
Notice that $Z_{\alpha,\beta,d}(0,x)=0$ for $|x|\ne 0$.
Denote the {\it Riemann-Liouville derivative} of order $\beta\in (0,1)$ by $D_{0+}^\beta$, i.e.,
\begin{align}\label{E:RL-D}
 \left(D_{0+}^\beta f\right)(t)=\frac{\ud }{\ud t} \frac{1}{\Gamma(1-\beta)}\int_0^t\frac{f(x)}{(t-x)^\beta}\ud x,\quad \text{for $t>0$.}
\end{align}
By  Property \ref{Prop:frac},
\[
Z_{\alpha,\beta,d}(t,x)=
\pi^{-d/2} t^{\Ceil{\beta}-1}|x|^{-d}
\FoxH{1,2}{3,2}
{\frac{2^{\alpha-1}\nu t^\beta}{|x|^\alpha}}
{(1-d/2,\alpha/2),\:(0,1),\:(0,\alpha/2)}
{(0,1),\:(1-\Ceil{\beta},\beta)}\,.
\]
Because $a^* = (2-\beta) +(2-\alpha)/2>0$, we can apply Theorem \ref{T:RL-D},
\begin{align*}
D_{0+}^{\Ceil{\beta}-\beta} Z_{\alpha,\beta,d}(t,x)&=
\pi^{-d/2} |x|^{-d} t^{\beta-1}
\FoxH{1,3}{4,3}
{\frac{2^{\alpha-1}\nu t^\beta}{|x|^\alpha}}
{(1-\Ceil{\beta},\beta),\:(1-d/2,\alpha/2),\:(0,1),\:(0,\alpha/2)}
{(0,1),\:(1-\Ceil{\beta},\beta),\:(1-\beta,\beta)}.
\end{align*}
Then we  use Properties \ref{Prop:Reduction} and \ref{Prop:Power}  to simplify the above expression
to obtain \eqref{E:Yab}.
The expression for $Z_{\alpha,\beta,d}^*$ can be proved in  a similar  way.
\end{proof}

\begin{lemma}\label{L:FY}
The Fourier transforms of $Y_{\alpha,\beta,d}(t,x)$ and $Z^*_{\alpha,\beta,d}(t,x)$
are given by  \eqref{E:FY} and \eqref{E:FZ*}, respectively.
\end{lemma}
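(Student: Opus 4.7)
The plan is to reduce both Fourier-transform computations to a single well-known identity for the Riemann--Liouville operator acting on a Mittag-Leffler function. Concretely, since the spatial Fourier transform $\cF$ and the (time) Riemann--Liouville derivative $D_{0+}^{\Ceil{\beta}-\beta}$ commute (they act on disjoint variables and $Z(\cdot,x)$ is locally integrable in $t$), the defining relation $Y = D_{0+}^{\Ceil{\beta}-\beta} Z$ from the proof of Lemma \ref{L:YExp} yields
\[
\cF Y(t,\cdot)(\xi) \;=\; D_{0+}^{\Ceil{\beta}-\beta}\!\left[\cF Z(t,\cdot)(\xi)\right]
\;=\; D_{0+}^{\Ceil{\beta}-\beta}\!\left[\, t^{\Ceil{\beta}-1} E_{\beta,\Ceil{\beta}}\!\bigl(-2^{-1}\nu t^\beta |\xi|^\alpha\bigr)\right],
\]
where I have used Lemma \ref{L:FZ}.

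The key identity I would invoke (see, for instance, (4.3.1) in Kilbas--Srivastava--Trujillo or (1.99)--(1.100) in Podlubny) is
\[
D_{0+}^{\sigma}\!\left[\, t^{\mu-1} E_{\beta,\mu}(\lambda t^\beta)\right] \;=\; t^{\mu-\sigma-1}\, E_{\beta,\mu-\sigma}(\lambda t^\beta),
\]
valid for $0 \le \sigma < \mu$ and any $\lambda\in\mathbb{C}$. Applying this with $\sigma=\Ceil{\beta}-\beta$, $\mu=\Ceil{\beta}$ and $\lambda=-\nu|\xi|^\alpha/2$ gives exactly
\[
\cF Y(t,\cdot)(\xi) \;=\; t^{\beta-1}\, E_{\beta,\beta}\!\bigl(-2^{-1}\nu t^\beta |\xi|^\alpha\bigr),
\]
which is \eqref{E:FY}. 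One can also verify this directly from the series defining $E_{\beta,\mu}$ and the elementary rule $D_{0+}^\sigma t^{k+\mu-1}=\Gamma(k+\mu)/\Gamma(k+\mu-\sigma)\cdot t^{k+\mu-\sigma-1}$; termwise differentiation is justified because the series converges absolutely and uniformly on compact $t$-intervals for each fixed $\xi$.

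For $Z^*$, assume $\beta\in(1,2)$ so that $\Ceil{\beta}=2$ and $Z^*=\partial_t Z$. From \eqref{E:FZ} we have $\cF Z(t,\cdot)(\xi)=t\,E_{\beta,2}(-2^{-1}\nu t^\beta|\xi|^\alpha)$, so commuting $\partial_t$ with $\cF$ and applying the same identity with $\sigma=1$, $\mu=2$ gives
\[
\cF Z^*(t,\cdot)(\xi)\;=\;E_{\beta,1}\!\bigl(-2^{-1}\nu t^\beta |\xi|^\alpha\bigr)\;=\;E_\beta\!\bigl(-2^{-1}\nu t^\beta |\xi|^\alpha\bigr),
\]
which is \eqref{E:FZ*}. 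The only real issue to be careful about is the commutation of $\cF$ with $D_{0+}^{\Ceil{\beta}-\beta}$; this is not a routine ``smooth'' interchange because $Z(t,\cdot)$ is only a tempered distribution in $x$ in general. However, by Lemma \ref{L:FZ} the transformed object is a smooth function of $\xi$ of at most polynomial growth, and for each fixed $\xi$ the function $t\mapsto \cF Z(t,\cdot)(\xi)$ is $C^\infty$ with $\cF Z(0,\cdot)(\xi)=0$ when $\Ceil{\beta}=2$; this lets one interchange the fractional time derivative and the Fourier integral by Fubini together with a standard mollification argument. Once this mild technicality is disposed of, the proof is a one-line application of the Mittag-Leffler identity above.
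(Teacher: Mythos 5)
Your proof is correct, and at the strategic level it coincides with the paper's: both arguments reduce the claim to evaluating $D_{0+}^{\Ceil{\beta}-\beta}$ (resp.\ $\partial_t$) of $\cF Z(t,\cdot)(\xi)=t^{\Ceil{\beta}-1}E_{\beta,\Ceil{\beta}}(-2^{-1}\nu t^\beta|\xi|^\alpha)$, using $Y=D_{0+}^{\Ceil{\beta}-\beta}Z$ and the commutation of the spatial Fourier transform with the time-fractional derivative. Where you genuinely differ is in the key computation. The paper stays entirely inside the Fox H-function calculus: it rewrites $E_{\beta,\Ceil{\beta}}$ as an $H^{1,1}_{1,2}$ via \eqref{E:ML-H}, applies Theorem \ref{T:RL-D} (fractional differentiation of an H-function, checking $a^*=2-\beta>0$), and then reduces the resulting $H^{1,2}_{2,3}$ back to $E_{\beta,\beta}$ using Properties \ref{Prop:Reduction} and \ref{Prop:Power}. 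You instead invoke, and in effect re-derive termwise, the classical identity $D_{0+}^{\sigma}\bigl[t^{\mu-1}E_{\beta,\mu}(\lambda t^\beta)\bigr]=t^{\mu-\sigma-1}E_{\beta,\mu-\sigma}(\lambda t^\beta)$, whose verification via $D_{0+}^{\sigma}t^{\beta k+\mu-1}=\tfrac{\Gamma(\beta k+\mu)}{\Gamma(\beta k+\mu-\sigma)}t^{\beta k+\mu-\sigma-1}$ needs no H-function machinery and no contour or parameter conditions; the hypotheses $\sigma=\Ceil{\beta}-\beta<\mu=\Ceil{\beta}$ (and $\sigma=1$, $\mu=2$ for $Z^*$) are plainly satisfied. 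Your route buys simplicity and self-containedness; the paper's buys uniformity with the rest of Section 4, where every object is already an H-function and the same toolbox (Theorems \ref{T:RL-D} and \ref{T:HConvH}) is reused repeatedly. You are also more explicit than the paper about justifying the interchange of $\cF$ with $D_{0+}^{\Ceil{\beta}-\beta}$, a step the paper passes over silently, so no gap there either.
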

\begin{proof}
We first consider $Y_{\alpha,\beta,d}$.
From  Lemma \ref{L:FZ} and   the proof of Lemma \ref{L:YExp} it follows
\[
\mathcal{F}Y_{\alpha,\beta,d}(t,\cdot)(\xi) = D_{0+}^{\Ceil{\beta}-\beta}
t^{\Ceil{\beta}-1}\FoxH{1,1}{1,2}{2^{-1}\nu t^\beta |\xi|^\alpha}{(0,1)}{(0,1),\:(1-\Ceil{\beta},\beta)}\,.
\]
Because $a^*=2-\beta>0$, we can apply Theorem \ref{T:RL-D} to obtain that
\[
\mathcal{F}Y_{\alpha,\beta,d}(t,\cdot)(\xi) =
t^{\beta-1}\FoxH{1,2}{2,3}{2^{-1}\nu t^\beta |\xi|^\alpha}
{(1-\Ceil{\beta},\beta),\:(0,1)}{(0,1),\:(1-\Ceil{\beta},\beta),\:(1-\beta,\beta)}\,.
\]
This is simplified to \eqref{E:FY}  by Properties \ref{Prop:Reduction} and \ref{Prop:Power}.
The identity \eqref{E:FZ*}  can be obtained  in a similar  way.
\end{proof}

\begin{lemma}\label{L_:NonNeg}
For all $\mu>0$ and $0<\theta<\min(1,\mu)$, the following H-function is nonnegative:
\begin{align}\label{E_:H1}
% \RR\ni x \mapsto 
\FoxH{1,0}{1,1}{|x|}{(\mu,\theta)}{(1,1)}\ge 0\,,  \quad \text{for all $\ x\in \RR$}\,. 
\end{align}
\end{lemma}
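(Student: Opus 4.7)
My plan is to reduce the nonnegativity claim to a statement about a Wright-type function, which I will then prove via a Hankel-contour representation combined with an identification as a Laplace transform of a nonnegative density.

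First, I would apply Property \ref{Prop:Power} with $\sigma = -1$ to extract a factor of $|x|\ge 0$ and reduce to showing that $G(z) := H^{1,0}_{1,1}\!\bigl(z\,\bigl|\,(\mu-\theta,\theta);(0,1)\bigr)$ is nonnegative for $z > 0$. Summing the residues of the Mellin--Barnes integrand $\Gamma(-s)/\Gamma(\mu-\theta-\theta s)$ at the poles $s = 0, 1, 2,\ldots$ of $\Gamma(-s)$ identifies $G$ with the Wright-type series $G(z) = \sum_{n\ge 0}(-z)^n/[n!\,\Gamma(\mu-(n+1)\theta)]$. For $\mu = 1$ this is the classical Mainardi M-Wright function $M_\theta$, whose nonnegativity is well known; for general $\mu$ the alternation of signs among the gamma factors rules out a direct termwise argument.

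Next, I would insert the standard Hankel-contour representation $1/\Gamma(w) = (2\pi i)^{-1}\int_{\mathrm{Ha}} e^\zeta \zeta^{-w}\,d\zeta$ into the Mellin--Barnes integral defining $G$, exchange orders of integration (justified since $0<\theta<1$ yields the positive Fox characteristic $a^{\ast}=1-\theta>0$), and evaluate the resulting inner integral as $\exp(-z\zeta^\theta)$, yielding
\[
G(z) \;=\; \frac{1}{2\pi i}\int_{\mathrm{Ha}} e^{\zeta - z\zeta^\theta}\,\zeta^{\theta-\mu}\,d\zeta.
\]

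To deduce nonnegativity, I would identify $G$ with a Laplace transform of a nonnegative function by combining two classical identities: (i) $e^{-\lambda^\theta} = \int_0^\infty e^{-\lambda v}\, g_\theta(v)\, dv$ where $g_\theta\ge 0$ is the density of a standard one-sided $\theta$-stable subordinator (requiring $0<\theta<1$); and (ii) $\lambda^{-(\mu-\theta)} = \int_0^\infty e^{-\lambda u}\,u^{\mu-\theta-1}/\Gamma(\mu-\theta)\,du$ (requiring $\mu>\theta$, which is guaranteed). The Laplace convolution of these gives $e^{-\lambda^\theta}\lambda^{\theta-\mu} = \int_0^\infty e^{-\lambda y}\,\Phi(y)\,dy$ with $\Phi = g_\theta * (u^{\mu-\theta-1}/\Gamma(\mu-\theta))\ge 0$. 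A substitution $\zeta = z^{-1/\theta}\eta$ in the Hankel integral, followed by the inverse-Laplace-transform identification, expresses $G(z)$, up to a positive power of $z$, as a nonnegative integral of the type $\int_0^\infty e^{-z^{1/\theta}y}\Phi(y)\,dy$.

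The hard part is this last step. A naive collapse of the Hankel contour onto the branch cut only produces the oscillatory representation $G(z) = \pi^{-1}\int_0^\infty e^{-r - zr^\theta\cos(\pi\theta)}\,r^{\theta-\mu}\sin(\pi(\mu-\theta) + zr^\theta\sin(\pi\theta))\,dr$, whose integrand is not sign-definite, so the real content of the proof is to circumvent this oscillation through the Laplace-transform identification. Checking that $0<\theta<\min(1,\mu)$ is precisely what makes both pieces of the convolution available --- the stable density $g_\theta$ requires $\theta<1$, and the positive kernel $u^{\mu-\theta-1}/\Gamma(\mu-\theta)$ requires $\mu>\theta$ --- is the crux of the argument.
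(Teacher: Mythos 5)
Your argument is correct in substance but takes a genuinely different route from the paper's. The paper works in the forward direction: it computes the Laplace transform of $f(x)=|x|^{-1}\FoxH{1,0}{1,1}{|x|}{(\mu,\theta)}{(1,1)}$ via Theorem \ref{T:Laplace} and \eqref{E:ML-H}, recognizes it as $E_{\theta,\mu}(-z)$, cites Schneider's theorem \cite{Sch96CM} that $E_{\alpha,\beta}(-z)$ is completely monotone if and only if $0<\alpha\le\min(\beta,1)$, and concludes by Bernstein's theorem. You go in the inverse direction: you write the same quantity as a Hankel (inverse-Laplace) integral of $e^{-z\zeta^\theta}\zeta^{\theta-\mu}$ and observe that this symbol is itself the Laplace transform of the nonnegative convolution $\Phi=g_\theta*\bigl(u^{\mu-\theta-1}/\Gamma(\mu-\theta)\bigr)$, so its inverse transform is nonnegative. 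In effect you re-prove, self-containedly, exactly the complete-monotonicity input that the paper imports from \cite{Sch96CM}; the hypotheses $\theta<1$ and $\theta<\mu$ enter for the same reasons in both proofs (existence of the subordinator density, resp.\ of the positive power kernel), so your diagnosis of where the parameter restriction is used is accurate. What your route costs is length and a few routine justifications left implicit (deforming the Bromwich line to the Hankel contour for $0<\theta<1$, interchanging the contour integral with the Laplace integrals, and continuity of $\Phi$ so that the inverse transform can be evaluated pointwise). Two small corrections: the identity you use to pull out the factor $|x|$ is the translation property $z^{\sigma}H^{m,n}_{p,q}\bigl(z\mid(a_i,\alpha_i);(b_j,\beta_j)\bigr)=H^{m,n}_{p,q}\bigl(z\mid(a_i+\sigma\alpha_i,\alpha_i);(b_j+\sigma\beta_j,\beta_j)\bigr)$ (Property 2.5 of \cite{KilbasSaigo04H}), not Property \ref{Prop:Power} of the appendix; and the substitution $\zeta=z^{-1/\theta}\eta$ produces the pointwise evaluation $G(z)=z^{(\mu-\theta-1)/\theta}\,\Phi\bigl(z^{-1/\theta}\bigr)$ rather than the Laplace integral $\int_0^\infty e^{-z^{1/\theta}y}\Phi(y)\,\ud y$ you wrote --- either expression is nonnegative, so the conclusion stands, but the formula as stated is not what the computation yields.
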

\begin{proof}
We only need to prove that the following function is nonnegative
\[
f(x) = |x|^{-1} \FoxH{1,0}{1,1}{|x|}{(\mu,\theta)}{(1,1)}, \quad x\in\RR.
\]
By Theorem \ref{T:Laplace} and equation \eqref{E:ML-H}, the Laplace transform of $f$ is equal to
\[
\int_0^\infty \ud x \: e^{-xz} f(x) = E_{\theta,\mu}(-z).
\]
By \cite{Sch96CM}, we know that the above Mittag-Leffler function $E_{\alpha,\beta}(-z)$ is completely monotonic
if and only if $0<\alpha\le \min(\beta,1)$.
Then the Bernstein  theorem (see, e.g., \cite[Theorem 12a]{Widder41LT}) implies that the function $f(x)$ is nonnegative.
\end{proof}

\begin{lemma}\label{L:NonNeg}
The nonnegative statements in   Theorem \ref{T:PDE} hold  true.
%  Both $Z_{\alpha,\beta}$ and $Y_{\alpha,\beta}$ are nonnegative functions.
\end{lemma}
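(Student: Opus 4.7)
The plan is to reduce all positivity claims to the known positivity of $\rho_{\alpha,d}$ from Theorem \ref{T:Stable} via subordination arguments in the Fourier domain. The Fourier representations \eqref{E:FZ}--\eqref{E:FZ*} express $\mathcal{F}Z$, $\mathcal{F}Y$, $\mathcal{F}Z^*$ as Mittag-Leffler functions of $|\xi|^\alpha$; whenever such a function of $|\xi|^\alpha$ is the Laplace transform of a nonnegative measure, inverting the Fourier transform realizes the corresponding fundamental solution as a positive mixture of rescaled $\alpha$-stable densities, and nonnegativity follows at once.

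For case (a), $\beta\in(0,1]$, this strategy applies directly. Invoking Schneider's complete monotonicity criterion from \cite{Sch96CM} (as already used in the proof of Lemma \ref{L_:NonNeg}), both $E_\beta(-z)$ and $E_{\beta,\beta}(-z)$ are completely monotonic on $[0,\infty)$, so by Bernstein's theorem there exist nonnegative measures $\sigma^{Z}_t$ and $\sigma^{Y}_t$ on $[0,\infty)$ with
\[
E_\beta\bigl(-2^{-1}\nu t^\beta|\xi|^\alpha\bigr)=\int_0^\infty e^{-s|\xi|^\alpha}\sigma^{Z}_t(\ud s),\qquad
E_{\beta,\beta}\bigl(-2^{-1}\nu t^\beta|\xi|^\alpha\bigr)=\int_0^\infty e^{-s|\xi|^\alpha}\sigma^{Y}_t(\ud s).
\]
By Theorem \ref{T:Stable}, the inverse Fourier transform of $\xi\mapsto e^{-s|\xi|^\alpha}$ is the nonnegative density $s^{-d/\alpha}\rho_{\alpha,d}(s^{-1/\alpha}x)$, so Fubini gives
\[
Z(t,x)=\int_0^\infty s^{-d/\alpha}\rho_{\alpha,d}(s^{-1/\alpha}x)\,\sigma^{Z}_t(\ud s)\ge 0,
\]
and the analogous identity (multiplied by $t^{\beta-1}>0$) shows $Y(t,x)\ge 0$. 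The boundary case $\beta=1$ is a degenerate limit where $\sigma^{Z}_t$ and $\sigma^{Y}_t$ become point masses and $Z(t,x)=Y(t,x)$ is a rescaled stable density outright.

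For case (c), $\beta\in(1,2)$, $d=1$, $\alpha\in[\beta,2]$, I would invoke \cite{MLP} for the nonnegativity of $Z^*_{\alpha,\beta,1}$ (as recalled in the introduction and visible from Remark \ref{R:Stable}). Because $Z(0,x)=0$ for $x\ne 0$ when $\beta\in(1,2)$, the identity $Z(t,x)=\int_0^t Z^*(s,x)\ud s$ delivers $Z\ge 0$. For $Y$, the identity $Y=D_{0+}^{\Ceil{\beta}-\beta}Z$ from Lemma \ref{L:YExp} reduces, under $Z(0,x)=0$, to $Y=I_t^{\beta-1}Z^*$ (since the Riemann-Liouville and Caputo derivatives then coincide), and the fractional integral of a nonnegative function is nonnegative. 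For case (b), $\alpha=2$, $\beta\in(1,2)$, $d\in\{2,3\}$, I would cite Lemma 25 of \cite{Pskhu09} for $Y_{2,\beta,d}\ge 0$; inverting $Y=D_{0+}^{2-\beta}Z$ via the semigroup identity $Z=I_t^{2-\beta}Y$ (again using $Z(0,x)=0$) yields $Z\ge 0$.

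The main obstacle is case (b). Once $\beta$ crosses $1$, the Mittag-Leffler functions $E_\beta(-z)$ and $E_{\beta,\beta}(-z)$ lose complete monotonicity, so the Bernstein subordination of case (a) collapses and one cannot write $\mathcal{F}Y$ as a positive mixture of Gaussians. The restriction $d\le 3$ in \cite{Pskhu09} is therefore not a convenience but reflects the strong Huygens phenomenon at the formal wave-equation limit $\beta\to 2$; a self-contained argument would require a delicate sign analysis of the Wright $M$-function appearing in the subordination integral for $Y_{2,\beta,d}$, which is precisely what Pskhu carried out and which we import rather than redo.
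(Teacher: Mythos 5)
Your proof is correct and its skeleton coincides with the paper's: cases (b) and (c) are handled exactly as in the paper (import $Y_{2,\beta,d}\ge 0$ for $d\le 3$ from \cite{Pskhu09}, import $Z^*_{\alpha,\beta,1}\ge 0$ from \cite{MLP}, and propagate positivity through the fractional integrals $Z=I_t^{1}Z^*$, $Y=I_t^{\beta-1}Z^*$, $Z=I_t^{2-\beta}Y$ --- your exponents $\beta-1$ and $1$ are the correct ones; the paper's ``orders $1-\beta$ and $1$'' is a slip), and case (a) rests on the same two ingredients, namely Schneider's complete-monotonicity criterion \cite{Sch96CM} combined with Bernstein's theorem, and the nonnegativity of the stable density from Theorem \ref{T:Stable}. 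The one genuine difference is where the subordination is performed in case (a): you apply Bernstein directly to $E_{\beta,\beta}(-\,\cdot\,)$ in the Fourier variable and then invert, whereas the paper stays in physical space, writes $Y(t,\cdot)$ as a Mellin-type convolution of two Fox H-functions via Theorem \ref{T:HConvH}, and isolates the positivity of the mixing kernel as Lemma \ref{L_:NonNeg} (whose proof is precisely your Laplace-transform/complete-monotonicity argument). Your route is more elementary in that it bypasses the H-function convolution machinery, but it buys this at the cost of a step you gloss over: since $E_{\beta,\Ceil{\beta}}(-2^{-1}\nu t^\beta|\xi|^\alpha)\sim C|\xi|^{-\alpha}$ at infinity, the Fourier transforms of $Z$ and $Y$ are not in $L^1(\RR^d)$ when $d\ge\alpha$, so ``inverting the Fourier transform'' and applying Fubini is not a pointwise operation. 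The fix is standard --- establish the identity $Z(t,\cdot)=\int_0^\infty s^{-d/\alpha}\rho_{\alpha,d}(s^{-1/\alpha}\cdot)\,\sigma^Z_t(\ud s)$ in $\mathcal{S}'$, pair with nonnegative test functions and use Tonelli on the nonnegative integrand to get $Z\ge 0$ almost everywhere, then upgrade by continuity for $x\ne 0$ --- but it should be said; the paper's verification of the convergence conditions $A_i+B_j>0$ in Theorem \ref{T:HConvH} is doing exactly this work.
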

\begin{proof}
We first prove the case (a).  In this case, $\beta\in (0,1]$.
When $\beta=1$, $Z$ and $Y$ coincide and they are alpha stable densities.
Hence, we need only consider the case $\beta\in (0,1)$.
Because $\lim_{t\rightarrow 0}Z_{\alpha,\beta,d}(t,x)=0$ for all $|x|\ne 0$,
and noticing that $I_t^{1-\beta}\partial ^{1-\beta}=\mathop{Id}$ (see, e.g., Theorem 3.8 of \cite{Die04}), we see that
\[
Z_{\alpha,\beta,d}(t,x)= I_t^{1-\beta}\partial ^{1-\beta} Z_{\alpha,\beta,d}(t,x)
=I_t^{1-\beta}Y_{\alpha,\beta,d}(t,x).
\]
Hence, it suffices  to show  the nonnegativity of $Y_{\alpha,\beta,d}(t,x)$.
Notice that by Property \ref{Prop:Power},
\begin{align*}
J:=& \int_0^\infty\frac{\ud s}{s}\:
\FoxH{1,1}{1,2}{\frac{|x|^\alpha}{2^{\alpha-1}\nu}s^\beta}{(1,1)}{(d/2,\alpha/2),\:(1,\alpha/2)}
\FoxH{1,0}{1,1}{(ts)^{-\beta}}{(\beta,\beta)}{(1,1)}\\
=& \beta^{-2}
\int_0^\infty\frac{\ud s}{s}\:
\FoxH{1,1}{1,2}{\frac{|x|^{\alpha/\beta}}{2^{(\alpha-1)/\beta}\nu^{1/\beta}}s}{(1,1/\beta)}{(d/2,\alpha/2\beta),\:(1,\alpha/2\beta)}
\FoxH{1,0}{1,1}{\frac{1}{ts}}{(\beta,1)}{(1,1/\beta)}.
\end{align*}
Now we check conditions in Theorem \ref{T:HConvH}.
Condition (1) of Theorem \ref{T:HConvH} is satisfied because $a_1^*=1/\beta>0$ and $a_2^*=(1-\beta)/\beta>0$.
Condition \eqref{E:HConvH} of Theorem \ref{T:HConvH} holds because
\[
A_1=0,\quad
A_2=\beta,\quad
B_1= d\beta/\alpha,\quad
B_2 = \infty.
\]
Hence, Theorem \ref{T:HConvH} implies that
\[
J=\beta^{-2}
\FoxH{2,1}{2,3}{\frac{|x|^{\alpha/\beta}}{t 2^{(\alpha-1)/\beta}\nu^{1/\beta}}}{(1,1/\beta),\:(\beta,1)}{(d/2,\alpha/2\beta),\:(1,1/\beta),\:(1,\alpha/2\beta)}.
\]
Then by Property \ref{Prop:Power}, we see that 
% Applying  Theorem 2.9 of \cite{KilbasSaigo04H} with $\eta=0$, $\sigma=\beta$ and $z=|x|^\alpha/(2^\alpha\nu)$
% to the expression of $Y(t,x)$ (it is easy to verify that all conditions are satisfied) yields
\begin{align*}
Y(t,x)
&=\beta \pi^{-d/2}t^{\beta-1}|x|^{-d}\int_0^\infty\ud s
\: s^{-1}
\FoxH{1,1}{1,2}{\frac{|x|^\alpha}{2^{\alpha-1}\nu}s^\beta}{(1,1)}{(d/2,\alpha/2),\:(1,\alpha/2)}
\FoxH{1,0}{1,1}{(ts)^{-\beta}}{(\beta,\beta)}{(1,1)}.
\end{align*}
By Lemma \ref{L_:NonNeg}, the second H-function in the above equation is nonnegative. On the other hand,
Theorem \ref{T:Stable} tells us that  the first H-function is nonnegative.
Thus, $Y(t,x)$ is nonnegative.

As for the case (b),     it is known from \cite{Pskhu09} that $Y_{2,\beta,d}$ is nonnegative for $d\le 3$.
By the same argument as in the proof of (a), $Z_{2,\beta,d}$ is also nonnegative.

Finally, for the case (c), it is proved in \cite{MLP} that   $Z^*_{\alpha,\beta,1}(t,x)$ is nonnegative.
By the same reason as in the proof of (a), $Y_{\alpha,\beta,1}$ and $Z_{\alpha,\beta,1}$ are
fractional integrals of $Z^*_{\alpha,\beta,1}$ of   orders  $1-\beta$ and $1$, respectively. Therefore,
both $Y_{\alpha,\beta,1}$ and $Z_{\alpha,\beta,1}$ are nonnegative as well.
The proof of Lemma \ref{L:NonNeg} is now complete.
\end{proof}

\begin{proof}[Proof of Theorem \ref{T:PDE}] The  Theorem \ref{T:PDE} follows from the above lemmas.
%Most parts of the theorem are proved by the proceeding lemmas.
%The Duhamel principle \eqref{E:Duhamel} follows from \cite{EK}; see also \cite{US}.
\end{proof}

\subsection{Proof of Theorem \ref{T:Stable}}\label{Sec:Stable}
\begin{proof}[Proof of Theorem \ref{T:Stable}]
The characteristic function \eqref{E_:Stable} of $X$ is proved in \cite[(7.5.3) on p. 211]{UchaikinZolotarev99}.
For the density $\rho_{\alpha,d}$, we need to compute the inverse Fourier transform.
From   \cite[(7.5.5)]{UchaikinZolotarev99} this inverse transform is
\[
\rho_{\alpha,d}(r) = (2\pi)^{-d/2}r^{1-d/2}\int_0^\infty e^{-t^\alpha} J_{(d-2)/2}(r t) t^{d/2}\ud t\,.
\]
By \eqref{E:BesselJ} and \eqref{E:zExp}, we have that
\[
t^{(d+2)/2}J_{(d-2)/2}(r t) =
(2/r)^{(d+2)/2}
\FoxH{1,0}{0,2}{\frac{r^2 t^2}{4}}{\midrule}{(d/2,1),\:(1,1)}
\]
and
\[
e^{-t^\alpha}= \frac{1}{\alpha}
\FoxH{1,0}{0,1}{t}{\midrule}{(0,1/\alpha)}.
\]
Hence,
\[
\rho_{\alpha,d}(r) = \pi^{-d/2} r^{-d} \int_0^\infty
t^{-1}
\FoxH{1,0}{0,2}{\left(\frac{rt}{2}\right)^{\alpha}}{\midrule}{(d/2,\alpha/2),\:(1,\alpha/2)}
\FoxH{1,0}{0,1}{t}{\midrule}{(0,1/\alpha)} \ud t.
\]
Application of Theorem \ref{T:HConvH} to evaluate the above integral yields the theorem.
\end{proof}

\section{Proof of Theorem \ref{T:ExUni}}\label{Sec:ExUni}

\begin{proof}[Proof of Theorem \ref{T:ExUni}]
Recall that $J_0(t,x)$ defined by  \eqref{E:J0} is the solution to the homogeneous equation.
% By our assumption on the initial condition, we know that $J_0(t,x)$ is bounded uniformly in $x$.
Using an iteration procedure as in \cite{HHNT},  we have
\begin{align*}
 f_n(s_1, x_1,\cdots, s_n, x_n, t, x)=g_n(s_1, x_1,\cdots, s_n, x_n, t, x)J_0(s_{\sigma(1)}, x_{\sigma(1)})
\end{align*}
where
\[
g_n(s_1, x_1,\cdots, s_n, x_n, t, x)=\frac{1}{n!}Y(t-s_{\sigma(n)}, x-x_{\sigma(n)} )\cdots Y(s_{\sigma(2)}-s_{\sigma(1)}, x_{\sigma(2)}-x_{\sigma(1)}) \,,
\]
and $\sigma$ denotes a permutation of  $ \{1,2,\cdots, n\}$ such that
$0<s_{\sigma(1)}<\cdots<s_{\sigma(n)}<t$.
Fix $t>0$ and $x\in \mathbb{R}^d$, set $ f_n(s, y, t, x)=f_n(s_1, y_1,\cdots, s_n, y_n, t, x).$
Then we have that
\begin{multline}\label{E:fnNorm}
n! \| f_n(\cdot,\cdot,t,x)\|^2_{\mathcal{H}^{\otimes n}}\\
= n!\int_{[0,t]^{2n}} \ud s\ud r\int_{\mathbb{R}^{2nd}}\ud y\ud z\: f_n(s, y, t, x)f_n(r, z, t, x)\prod_{i=1}^n\Lambda(y_i-z_i)\prod_{i=1}^n\gamma(s_i-r_i).
\end{multline}
where $\ud y=\ud y_1 \cdots \ud y_n$, the differentials $\ud z$, $\ud s$ and $\ud r$ are defined similarly.
Set $\mu(\ud\xi) : = \prod_{i=1}^{n} \mu(\ud\xi_i)$.
Using the Fourier transform and Cauchy-Schwartz inequality together with \eref{Davar id}, we obtain that
\begin{align}
\label{E:n!fn}
n!\| f_n(\cdot,\cdot, t, x)\|^2_{\mathcal{H}^{\otimes n}}
\leq& \frac{\widehat{C}_t^2 \: n!}{(2\pi)^{nd}}  \int_{[0,t]^{2n}}\int_{\mathbb{R}^{nd}}\mathcal{F}g_n(s, \cdot , t, x)(\xi) \overline{\mathcal{F}g_n(r, \cdot, t, x)(\xi)}\mu(\ud\xi)\prod_{i=1}^{n}\gamma(s_i -r_i)\ud s\ud r\\
\notag
\leq&  \frac{\widehat{C}_t^2 \: n!}{(2\pi)^{nd}}  \int_{[0,t]^{2n}}\left(\int_{\mathbb{R}^{nd}} \big(\mathcal{F}g_n(s, \cdot, t, x)(\xi)\big)^2\mu(\ud\xi)\right)^{1/2} \\
\notag
&\times \left(\int_{\RR^{nd}}\big(\mathcal{F}g_n(r, \cdot, t, x)(\xi)\big)^2 \mu(\ud\xi)\right)^{1/2}
\prod_{i=1}^{n}\gamma(s_i -r_i)\ud s\ud r \,,
\end{align}
where the constant $\widehat{C}_t$ is defined in \eqref{E:HatCt}.
Thus, thanks to the basic inequality $ab \leq 2^{-1}(a^2+ b^2)$ and the fact that $\gamma$ is locally integrable, we obtain
\begin{align*}
n!\| f_n(\cdot,\cdot,t, x)\|^2_{\mathcal{H}^{\otimes n}}
&\leq  \frac{\widehat{C}_t^2\:  n!}{(2\pi)^{nd}} \int_{[0,t]^{2n}} \int_{\mathbb{R}^{nd}}|\mathcal{F}g_n(s, \cdot, t, x)(\xi)|^2\mu(\ud\xi)\prod_{i=1}^{n}\gamma(s_i -r_i) \ud s \ud r\\
&\leq  \frac{\widehat{C}_t^2\:  C_t^n n!}{(2\pi)^{nd}} \int_{[0,t]^n}\ud s\int_{\mathbb{R}^{nd}}|\mathcal{F}g_n(s, \cdot, t, x)(\xi)|^2\mu(\ud\xi)\,,
\end{align*}
where the constant $C_t$ is defined in \eqref{E:Ct}. Furthermore, from the Fourier transform of $Y(t,\cdot)$ we can check that
\begin{multline*}
% \label{E:Fgn}
|\mathcal{F}g_n(r, \cdot, t, x)(\xi)|^2\\
=\frac{1}{(n!)^2}\prod_{i=1}^{n}
\bigg[ (s_{\sigma(i+1)}-s_{\sigma(i)})^{\beta-1}E_{\beta, \beta}\big(-2^{-1}\nu(s_{\sigma(i+1)}-s_{\sigma(i)})^{\beta}|\xi_{\sigma(i)}+\cdots +\xi_{\sigma(1)}|^\alpha\big)
\bigg]^2\,,
\end{multline*}
where we have set $ s_{\sigma(n+1)}=t $. As a consequence,
\begin{align}\notag
&\int_{\mathbb{R}^{nd}}|\mathcal{F}g_n(s, \cdot, t, x)(\xi)|^2\mu(\ud\xi)\\ \notag
&\leq \frac{1}{(n!)^2}\prod_{i=1}^{n} \sup_\eta\left|\int_{\mathbb{R}^{d}}(Y(s_{\sigma(i+1)}-s_{\sigma(i)},\cdot)*Y(s_{\sigma(i+1)}-s_{\sigma(i)},\cdot))
(x_{\sigma(i)})e^{i\eta\cdot x_{\sigma(i)}}\Lambda(x_{\sigma(i)})\ud x_{\sigma(i)}\right|\\
&\leq \frac{1}{(n!)^2}\prod_{i=1}^{n} \left|\int_{\mathbb{R}^{d}}(Y(s_{\sigma(i+1)}-s_{\sigma(i)},\cdot)*Y(s_{\sigma(i+1)}-s_{\sigma(i)},\cdot))
(x_{\sigma(i)}) \Lambda(x_{\sigma(i)})\ud x_{\sigma(i)}\right|\notag \\
&\leq \frac{1}{(n!)^2}\prod_{i=1}^{n} \int_{\mathbb{R}^d}\big[(s_{\sigma(i+1)}-s_{\sigma(i)})^{\beta-1}
E_{\beta,\beta}\big(-2^{-1}\nu(s_{\sigma(i+1)}-s_{\sigma(i)})^{\beta}|\xi_{\sigma(i)}|^\alpha\big)\big]^2 \mu(\ud\xi_{\sigma(i)}),
\label{E_:YNonNeg}
\end{align}
where we have used the fact that $ |e^{ix_{\sigma(i)}\cdot{\eta}}|=1$
and  that $Y$  and $\La$ are nonnegative to get rid of the supremum  in $\eta$.
Therefore, using Fourier transform again we have
\begin{equation}
\label{E:Fgn}
\begin{aligned}
n! \| f_n(\cdot,\cdot,t,x)\|^2_{\mathcal{H}^{\otimes n}}
\leq \frac{\widehat{C}_t^2 C_t^n}{(2\pi)^{nd}} & \int_{\mathbb{R}^{nd}}\mu(\ud\xi) \int_{T_n(t)}\ud s  \\
&\times \prod_{i=1}^{n}
\big(s_{i+1}-s_{i}\big)^{2\beta-2}
E_{\beta,\beta}^2\big(-2^{-1}\nu(s_{i+1}-s_{ i})^{\beta}|\xi_{i}|^\alpha\big)\,,
\end{aligned}
\end{equation}
where $T_n(t)$ denotes the simplex
\begin{align}\label{E:Tn}
  {
T_n(t):=\{s=(s_1,\cdots,s_n):\:0<s_1<\cdots<s_n<t\}.
}
\end{align}
% Without loss of generality, we set $\nu=2$.
% Let us now estimate the right hand side of \eqref{E_:Fgn}.
By the change of variables $s_{i+1}-s_i=w_i$ for $1 \leq i \leq n-1$ and $t-s_n=w_n$,
we see that
\begin{align*}
n!\| f_n(\cdot,\cdot,t,x)\|^2_{\mathcal{H}^{\otimes n}}
\leq \frac{\widehat{C}_t^2 C_t^n}{(2\pi)^{nd}}\int_{\mathbb{R}^{nd}}\int_{S_{t,n}}\prod_{i=1}^{n} w_i^{2\beta-2}
E_{\beta,\beta}^2\big(-2^{-1}\nu w_i^{\beta}|\xi_{i}|^\alpha\big)\ud w_i \mu(\ud\xi_i),
\end{align*}
where
\[
S_{t,n}=\{(w_1,\cdots,w_n)\in[0, \infty)^n: \: w_1+\cdots+w_n\leq t \}.
\]
We take $N\geq 1 $ which will be chosen later, and let
\begin{align}\label{E:CnDn}
C_N=\int_{|\xi|\geq N}\frac{\mu(d\xi)}{|\xi|^{2\alpha-\alpha/\beta}} \quad \text {and} \quad  D_N=\mu \{\xi\in {\mathbb{R}}^d: |\xi|\leq N \}.
\end{align}
Let $I$ be a subset of $ \{1, 2, \cdots, n\}$ and $I^c=\{1, 2, \cdots, n\} \backslash I$. Then we have
\begin{align*}
\int_{\mathbb{R}^{nd}}\int_{S_{t,n}}&\prod_{i=1}^{n} w_i^{2\beta-2}
E_{\beta,\beta}^2\big(-2^{-1}\nu w_i^{\beta}|\xi_{i}|^\alpha\big)\ud w_i\mu(\ud\xi_i)\\
=&\int_{\mathbb{R}^{nd}}\int_{S_{t,n}}\prod_{i=1}^{n} w_i^{2\beta-2}
E_{\beta,\beta}^2\big(-2^{-1}\nu w_i^{\beta}|\xi_{i}|^\alpha\big)\big(\Indt{|\xi_i|\leq N} +{\bf 1}_{\{|\xi_i|> N\}}\big) \ud w_i\mu(\ud\xi_i)\\
=&\sum_{I \subset \{1, 2, \cdots, n \} }\int_{\mathbb{R}^{nd}}\ud w\int_{S_{t,n}}\mu(\ud\xi)\prod_{i\in I}E_{\beta,\beta}^2\big(-2^{-1}\nu w_i^{\beta}|\xi_{i}|^\alpha\big) w_i^{2\alpha-2}\Indt{|\xi_i|\leq N} \\
& \times \prod_{j\in I^c}E_{\beta,\beta}^2\big(-2^{-1}\nu w_j^{\beta}|\xi_{j}|^\alpha\big)  w_j^{2\beta-2} {\bf 1}_{\{|\xi_j|> N\}}\,.
\end{align*}
where  $\ud w=\ud w_1\cdots \ud w_n$.
For the indices $i$ in the set $I$, for some constant $C_\beta\ge 1$ (one may choose $C_\beta=\Gamma(\beta)^{-2}$)
\begin{align}\label{E:Cbeta}
E_{\beta,\beta}^2\big(-2^{-1}\nu w_i^{\beta}|\xi_{i}|^\alpha\big)\le C_\beta.
\end{align}
Now  using the inclusion $S_{t,n} \subset S_{t}^I \times S_{t}^{I^c}$ with
\begin{equation*}
 S_{t}^I=\bigg\{(w_i,i\in I): w_i\geq 0, \: \sum_{i\in I}w_i\leq t\bigg\}\quad \text{and} \quad
 S_{t}^{I^c}=\bigg\{(w_i,i\in I^c): w_i\geq 0,\: \sum_{i\in I^c}w_i\leq t \bigg \}\,,
\end{equation*}
we obtain that
\begin{align*}
\int_{\mathbb{R}^{nd}}\int_{S_{t,n}} &\prod_{i=1}^{n} w_i^{2\beta-2}
E_{\beta,\beta}^2\big(-2^{-1}\nu w_i^{\beta}|\xi_{i}|^\alpha\big)\ud w_i \mu(\ud\xi_i)\\
\le C_\beta^{|I|} &\sum_{I \subset \{1, 2, \cdots, n \} }\int_{\mathbb{R}^{nd}}\mu(\ud\xi)\int_{S^I_{t} \times S^{I^c}_{t}} \ud w \: \\
&\times \prod_{i\in I}w_i^{2\beta-2} \Indt{|\xi_i|\leq N}
\prod_{j\in I^c}w_j^{2\beta-2 } \Indt{|\xi_j|> N} E_{\beta,\beta}^2\big(-2^{-1}\nu w_j^{\beta}|\xi_{j}|^\alpha\big)\,.
\end{align*}
Furthermore, one can bound the integral over $S^{I^c}_{t}$ in the following way
\begin{eqnarray*}
\int_{  S^{I^c}_{t}}\prod_{j\in I^c}w_j^{2\beta-2 }
E_{\beta,\beta}^2\big(-2^{-1}\nu w_j^{\beta}|\xi_{j}|^\alpha\big)\ud w_j
\leq \int_{ \R_+^{|I^c|}}\prod_{j\in I^c}w_j^{2\beta-2 } E_{\beta,\beta}^2\big(-2^{-1}\nu w_j^{\beta}|\xi_{j}|^\alpha\big)\ud w_j\,.
\end{eqnarray*}
Then make the change of variables  $w_j^{\beta}|\xi_{j}|^\alpha \rightarrow v_j$ to obtain
\begin{align*}
\int_{[0, \infty)^{|I^c|}}\prod_{j\in I^c}w_j^{2\beta-2 } E_{\beta,\beta}^2\big(-2^{-1}\nu w_j^{\beta}|\xi_{j}|^\alpha\big)\ud w_j
&\leq\prod_{j\in I^c} \frac{1}{|\xi_i|^{2\alpha-\alpha/\beta}}\int_0^\infty
\frac{1}{\beta} v_j^{1-1/\beta}E_{\beta,\beta}^2(- 2^{-1}\nu v_j)\ud v_j \\
&\le C_{\nu,\beta}^{|I^c|}\prod_{j\in I^c} \frac{1}{|\xi_i|^{2\alpha-\alpha/\beta}}\,,
\end{align*}
where
\[
C_{\nu,\beta}=\int_0^\infty \frac{1}{\beta}v^{1-1/\beta} E_{\beta,\beta}^2(-2^{-1}\nu v)\ud v.
\]
Note that the integrability of the above quantity at zero and at infinity
    implies that $\beta>1/2$ and $\beta>0$, respectively.
Thus we have the following bound.
\begin{align*}
\int_{\mathbb{R}^{nd}}&\int_{S_{t,n}} \prod_{i=1}^{n} w_i^{2\beta-2}
E_{\beta,\beta}^2\big(-2^{-1}\nu w_i^{\beta}|\xi_{i}|^\alpha\big)\ud w_i\mu(\ud\xi_i)
\\
&\leq\sum_{I \subset \{1, 2, \cdots, n \} }
C_\beta^{|I|}\int_{S_{t}^I}\prod_{i\in I} w_i^{2\beta-2} \ud w_i \cdot
\left(\mu \{\xi\in {\mathbb{R}}^d: |\xi|\leq N \}\right)^{|I|}
C^{|I^c|}_{\nu,\beta}\int_{|\xi_j|>N, \forall j\in I^c}\prod_{j\in I^c}\frac{\mu(\ud\xi_j)}{|\xi_j|^{2\alpha-\alpha/\beta}}\\
&\leq \sum_{I \subset \{1, 2, \cdots, n \} }\frac{C_\beta^{|I|}t^{(2\beta -1)|I|}C_{\nu,\beta}^{|I^c|}}{\Gamma((2\beta -1)|I|+1)} D_N^{|I|} C_N^{n-|I|} \\
&\leq C_*^n \sum_{k=0}^{n}  {n \choose k}  \frac{t^{(2\beta -1)k} }{\Gamma((2\beta -1)k+1)}D_N^k C_N^{n-k}\,.
\end{align*}
where $C_*=\max(C_\beta,C_{\nu,\beta})$, and $C_N$ and $D_N$ are defined in \eqref{E:CnDn}.
Observing the trivial inequality  $ {n \choose k}\le 2^n$, we have
\begin{align*}
\sum_{n=0}^{\infty}n!\| f_n(\cdot,\cdot,t,x)\|^2_{\mathcal{H}^{\otimes n}}
&\leq  \frac{\widehat{C}_t^2}{(2\pi)^{nd}} \sum_{k=0}^{\infty}\sum_{n=k}^{\infty}
{n \choose k} (C_*C_t)^n \frac{t^{(2\beta -1)k }} {\Gamma((2\beta -1)k+1)}D_N^k C_N^{n-k}\\
&\le \frac{\widehat{C}_t^2}{(2\pi)^{nd}} \sum_{k=0}^{\infty}\sum_{n=k}^{\infty}   \frac{t^{(2\beta -1)k} }{\Gamma((2\beta -1)k+1)}D_N^k C_N^{-k}(2C_*C_tC_N)^n\,.
\end{align*}
Choosing  $N$  sufficiently large  so  that $2C_*C_tC_N<1$ yields
\begin{eqnarray*}
\sum_{n=0}^{\infty}n!\| f_n(\cdot,\cdot,t,x)\|^2_{\mathcal{H}^{\otimes n}}
\leq \frac{\widehat{C}_t^2}{(2\pi)^{nd}} \sum_{k=0}^{\infty}  \frac{t^{(2\beta -1)k} }{\Gamma((2\beta -1)k+1)}D_N^k C_N^{-k}\frac{(2C_*C_tC_N)^k}{1-2C_*C_tC_N}<\infty\,.
\end{eqnarray*}
This proves \eref{eq: L2 chaos}, and thus the existence and uniqueness of the solution.
\end{proof}

\section{Proof of Theorem \ref{T:Rietz}} \label{Sec:Rietz}

\begin{lemma}\label{L:J0}
Suppose that the initial conditions $u_k(x) \equiv u_k$ are constant. Then
under the three cases of \eqref{E:Cases}, we have that
 \begin{align}\label{E:J0Const}
 J_0(t,x)=
 \begin{cases}
  u_0 & \text{if $\beta\in (0,1]$},\\
  u_0 +  t^{\beta-1} u_1  & \text{if $\beta\in (1,2)$}.
 \end{cases}
 \end{align}
\end{lemma}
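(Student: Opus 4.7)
The strategy is to exploit the fact that when $u_k$ is a constant, translation invariance of $Z$ renders $J_0(t,x)$ independent of $x$, and each term in the sum \eqref{E:J0} collapses to the constant $u_{\Ceil{\beta}-1-k}$ multiplied by $\int_{\RR^d} \partial^k Z(t,y)\,\ud y$. Commuting the $t$-derivative past the spatial integral, which is legitimate in the regimes of \eqref{E:Cases} thanks to the nonnegativity of $Z$ from Theorem~\ref{T:PDE} and the Fox H-function decay in \eqref{E:Zab}, this reduces to $\partial_t^k \int_{\RR^d} Z(t,y)\,\ud y$.

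The crux is then evaluating $\int_{\RR^d} Z(t,y)\,\ud y$, which is simply $\cF Z(t,\cdot)(0)$. Plugging $\xi = 0$ into the Fourier-side formula \eqref{E:FZ} and using the series definition $E_{\alpha,\beta}(0) = 1/\Gamma(\beta)$ of the two-parameter Mittag-Leffler function, one gets
\[
\int_{\RR^d} Z(t,y)\,\ud y \;=\; \frac{t^{\Ceil{\beta}-1}}{\Gamma(\Ceil{\beta})}.
\]
All that remains is to differentiate this power of $t$ the required number of times and substitute back into \eqref{E:J0}.

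For $\beta\in(0,1]$ only the $k=0$ contribution survives, since $\Ceil{\beta}=1$, and one immediately recovers $J_0(t,x)=u_0$. For $\beta\in(1,2)$ both the $k=0$ and $k=1$ terms are present, and combining them with the reindexing $u_{\Ceil{\beta}-1-k}$ produces the stated form of the lemma.

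The only subtle step is justifying the commutation of $\partial^k$ in $t$ with the spatial integral; for this one uses nonnegativity of $Z$ (together with $\partial_t Z$ in the case $\beta \in (1,2)$) in the regimes of \eqref{E:Cases}, plus the integrability and decay of the Fox H-function representation from the appendix. Beyond that, the argument is just elementary calculus on $t^{\Ceil{\beta}-1}/\Gamma(\Ceil{\beta})$, so no new analytic input beyond \eqref{E:FZ} is required.
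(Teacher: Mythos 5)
Your treatment of the first two cases of \eqref{E:Cases}, and of the $Z$-term in the third case, is essentially the paper's argument: constancy of $u_k$ reduces each summand of \eqref{E:J0} to $u_{\Ceil{\beta}-1-k}\int_{\RR^d}\partial^k Z(t,y)\,\ud y$, and the nonnegativity of $Z$ lets one identify the spatial integral with the Fourier transform at $\xi=0$, evaluated from \eqref{E:FZ} via $E_{\beta,\Ceil{\beta}}(0)=1/\Gamma(\Ceil{\beta})$.

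The genuine gap is in the third case of \eqref{E:Cases} ($\alpha=2$, $\beta\in(1,2)$, $d\in\{2,3\}$). There the sum \eqref{E:J0} contains the term $\int_{\RR^d}\partial_t Z(t,x-y)\,\ud y=\int_{\RR^d}Z^*(t,x-y)\,\ud y$, and you justify passing to the Fourier side by invoking ``nonnegativity of $Z$ (together with $\partial_t Z$)''. But Theorem \ref{T:PDE} part (b) asserts nonnegativity only of $Z$ and $Y$ when $d=2,3$; the nonnegativity of $Z^*=\partial_t Z$ is \emph{not} known in this regime (it is established only for $d=1$ in part (c)), and the paper explicitly flags this as the obstruction: without a sign, the identity $\int_{\RR^d}Z^*(t,y)\,\ud y=\cF Z^*(t,\cdot)(0)$ requires a separate proof of absolute integrability, which your proposal does not supply. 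The paper circumvents this by computing $\int_{\RR^d}Z^*_{2,\beta,d}(t,x)\,\ud x$ directly from the Fox H-function representation \eqref{E:Z*ab}: it passes to polar coordinates, takes the Laplace transform of the resulting radial H-function (Theorem \ref{T:Laplace}), and evaluates that transform at the origin via the asymptotic expansion of Theorem \ref{T:AsypZero}, obtaining $\Gamma(d/2)/2$ and hence $\int_{\RR^d}Z^*\,\ud x=1$. Some argument of this kind (or another proof that $Z^*\in L^1$ with integral $1$) is needed to close your proof.

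A smaller but real point: carried out literally, your computation gives $\partial_t^{k}\bigl[t^{\Ceil{\beta}-1}/\Gamma(\Ceil{\beta})\bigr]=t$ for $k=0$ and $=1$ for $k=1$ when $\beta\in(1,2)$, so with the indexing of \eqref{E:J0} the sum evaluates to $u_0+t\,u_1$, not $u_0+t^{\beta-1}u_1$. Your assertion that ``elementary calculus'' on $t^{\Ceil{\beta}-1}/\Gamma(\Ceil{\beta})$ produces the stated exponent $\beta-1$ is therefore not a consequence of the formula you derived, and you should not claim it as such.
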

\begin{proof}
By Theorem \ref{T:PDE}, we know that under the   {first two cases} of \eqref{E:Cases},
the fundamental solutions are nonnegative and hence,
\begin{align*}
J_0(t,x)= &\sum_{k=0}^{\Ceil{\beta}-1} u_{  {\Ceil{\beta}-1-k}} \int_{\RR^d} \partial ^k Z(t,x-y) \ud y=
\sum_{k=0}^{\Ceil{\beta}-1}u_{  {\Ceil{\beta}-1-k}} \: \cF \left[\partial ^k Z(t,\cdot)\right](0),
% =&
% \begin{cases}
%  u_0 & \text{if $\beta\in (0,1]$,}\\
%  u_0 + t^{\beta-1} u_1 & \text{if $\beta\in (1,2)$.}
% \end{cases}
\end{align*}
which is equal to the right hand side of \eqref{E:J0Const}.
As for   {the last case} in \eqref{E:Cases},
because $Z$ is still nonnegative, the contribution by $u_0$ can be computed  in the same way.
However, we do not know whether $Z^*$ is nonnegative,
and thus we cannot use the Fourier transform arguments to compute  the contribution by $u_1$.
Instead, we compute  it directly:
\[
\int_{\RR^{  {d}}} Z_{2,\beta,  {d}}^*(t,x)\ud x = S_{  {d-1}} \pi^{-  {d}/2} \int_0^\infty x^{-1}
\FoxH{2,0}{1,2}{\frac{x^\alpha}{2\nu t^\beta}}{(1,\beta)}{(d/2,1),\: (1,1)}\ud x,
\]
where
\begin{align}\label{E:Sn}
S_{  {d-1}} =\frac{2 \pi^{  {d/2}}}{\Gamma(  {d/2})}.
\end{align}
Then by Theorem \ref{T:Laplace}, we have the following Laplace transform:
\[
g(z):=\int_0^\infty e^{-zx}x^{-1}
\FoxH{2,0}{1,2}{\frac{x^\alpha}{2\nu t^\beta}}{(1,\beta)}{(d/2,1),\: (1,1)}\ud x
=
\FoxH{1,2}{2,2}{2\nu t^\beta z^2}{(1-d/2,1),\:(0,1)}{(0,\alpha),\:(0,\beta)}.
\]
Then by Theorem \ref{T:AsypZero},
\[
g(0) = h_{10}^* = \frac{\Gamma(d/2)}{2}.
\]
Putting these identities  together, we have  that
\[
\int_{\RR^{  {d}}} Z_{2,\beta,  {d}}^*(t,x)\ud x=1.
\]
This completes the proof of Lemma \ref{L:J0}.
\end{proof}

\begin{proof}[Proof of Theorem \ref{T:Rietz}]
Since $\Lambda(x)=|x|^{-\kappa}$, we have $\mu(d\xi)= C_{\kappa} |\xi|^{\kappa -d}$, for some coefficient $C_{\kappa}$; see, e.g., \cite{Ste}.
We begin with the upper bound.
By the hypercontractivity property of the $n$-th chaos, i.e.
\begin{equation}
\|I_n(f_n(\cdot,\cdot,t,x))\|_{L^p(\Omega)} \leq (p-1)^{\frac{n}{2}} \|I_n(f_n(\cdot,\cdot,t,x))\|_{L^2(\Omega)}\,.
\end{equation}
On the other hand,  from the proof of Theorem \ref{T:ExUni} (see \eqref{E:Fgn}) it follows
\begin{align*}
&\hspace{-2em}\|I_n(f_n(\cdot,\cdot,t,x))\|^2_{L^2(\Omega)} =n!\Norm{f_n(\cdot,\cdot,t,x)}_{\cH^{\otimes n}}^2
\\
&\le \frac{\widehat{C}_t^2 C_\kappa^n  C_t^n}{(2\pi)^{nd}} \int_{T_n(t)}  \int_{\RR^{nd}}\prod_{i=1}^n (s_{i+1}-s_i)^{2\beta-2} E_{\beta,\beta}^2 (-2^{-1}\nu (s_{i+1}-s_i)^{\beta}|\xi_i|^{\alpha}) |\xi_i|^{\kappa-d}\ud \xi_i \ud s_i\\
&=  \frac{\widehat{C}_t^2 C_\kappa^n C_t^n (2/\nu)^{\kappa n/\alpha}}{(2\pi)^{nd}}  \int_{T_n(t)}  \int_{\RR^{nd}}\prod_{i=1}^n (s_{i+1}-s_i)^{2\beta-2-\frac{\beta \kappa}{\alpha}} E_{\beta,\beta}^2 (-|\eta_i|^{\alpha}) |\eta_i|^{\kappa-d} \ud\eta_i \ud s_i\\
&= \frac{\widehat{C}_t^2 C_\kappa^n C_t^n (2/\nu)^{\kappa n/\alpha} \widetilde{C}^n}{(2\pi)^{nd}}  \int_{T_n(t)}  \prod_{i=1}^n (s_{i+1}-s_i)^{2\beta-2-\frac{\beta \kappa}{\alpha}}  \ud s_i,
\end{align*}
where   {$\widehat{C}_t$ is defined in \eqref{E:HatCt}},
\[
\widetilde{C} := \int_{\R^d} E_{\beta,\beta}^2(-|\eta|^\alpha) |\eta|^{\kappa-d}\ud \eta
=S_{d-1}\int_0^\infty
E_{\beta,\beta}^2(-t^\alpha) t^{\kappa-1} \ud t,
\]
and $S_{d-1}$ is defined in \eqref{E:Sn}.
According to the property of the Mittag-Leffler function at zero and infinity,
if $0<\kappa< 2\alpha$, then the above constant $\widetilde{C}$ is finite.
Then, under the condition that $\kappa<\alpha(2-1/\beta)$ (this condition implies
$0<\kappa< 2\alpha$), the integration over $\ud s$   can be evaluated
explicitly; see \cite[Lemma 4.5]{HHNT}. Hence,
\[
 \|I_n(f_n(\cdot,\cdot,t,x))\|^2_{L^2(\Omega)} \le \frac{1}{(2\pi)^{nd}}
(C_\kappa C_* C_t\widetilde{C})^n \frac{t^{(2\beta -1 -\frac{\beta \kappa }{\alpha})n} \:(2/\nu)^{\kappa n/\alpha} } { \Gamma((2\beta -1 -\frac{\beta \kappa}{\alpha})n+1)}\,,
\]
where  $
C_*:=\Gamma(2\beta-1-\beta\kappa/\alpha)$.\
Denote
\[
\Theta_t := \frac{1}{(2\pi)^d} C_\kappa C_* C_t \widetilde{C}\: (2/\nu)^{\kappa/\alpha}.
\]
Thus we obtain
\begin{equation}
\|I_n(f_n(\cdot,\cdot,t,x))\|_{L^2(\Omega)} \leq  {\widehat{C}_t} \frac{\Theta_t^{n/2} t^{(\beta -\frac{1}{2} -\frac{\beta \kappa }{2\alpha})n} }{ \Gamma((2\beta -1 -\frac{\beta \kappa}{\alpha})n+1)^{\frac{1}{2}}}\,.
\end{equation}
This bound together with the hypercontractivity implies that
\begin{equation}
\|I_n(f_n(\cdot,\cdot,t,x))\|_{L^p(\Omega)} \leq { \widehat{C}_t }\frac{\Theta_t^{n/2} t^{(\beta -\frac{1}{2} -\frac{\beta \kappa }{2\alpha})n}{(p-1)^{\frac{n}{2}} }}{ \Gamma((2\beta -1 -\frac{\beta \kappa}{\alpha})n+1)^{\frac{1}{2}}}\,.
\end{equation}
Therefore,
\begin{align*}
\|u(t,x)\|_{L^p(\Omega)} &
\leq \sum_{n=0}^{\infty} \|I_n(f_n(\cdot,\cdot,t,x))\|_{L^p(\Omega)}
\leq { \widehat{C}_t }\sum_{n=0}^{\infty}   \frac{\Theta_t^{n/2} t^{\theta n} p^{\frac{n}{2}}}{ \Gamma(2\theta n+1)^{\frac{1}{2}}}.
\end{align*}
where
\begin{align}\label{E:theta}
\theta:=\beta-1/2-\beta\kappa/(2\alpha).
\end{align}
Then by the fact that $\Gamma(1+2x)\ge \Gamma(1+x)^2$ for $x>-1$,
\begin{align*}
\|u(t,x)\|_{L^p(\Omega)}
&\leq  \widehat{C}_t \sum_{n=0}^{\infty}  \frac{\Theta_t^{n/2}  t^{\theta n} p^{\frac{n}{2}}}{ \Gamma(\theta n+1)}
= \widehat{C}_t E_{\theta}\left(\Theta_t^{1/2} t^\theta p^{1/2}\right)\\
&\leq C \widehat{C}_t \exp \left(t (C_{\kappa} C_t \widetilde{C} C_* ({2}/{\nu})^{\kappa/\alpha} (2\pi)^{-d})^{\frac{\alpha}{2\alpha \beta- \alpha -\beta \kappa}} p ^{\frac{\alpha}{2\alpha \beta -\alpha -\beta \kappa}}  \right)\,,
\end{align*}
for some positive constant $C=C(\alpha,\beta,\kappa)$,
where in the last step, we have used the asymptotic property of the Mittag-Leffler function (see, e.g., \cite[Theorem 1.3]{Podlubny99FDE}).

\bigskip
Now we consider the special case when $\gamma$ is the Dirac delta function.
  {
By   Lemma \ref{L:J0} and the assumptions on the initial conditions we have
\[
J_0(t,x)= u_0 + t^{\beta-1} u_1 \Indt{\beta >1} = u_0.
\]
}
From the proof of Theorem \ref{T:ExUni} (see \eqref{E:n!fn} and \eqref{E:Fgn}), we see that
\begin{align*}
\|I_n(f_n(\cdot,\cdot,t,x))\|^2_{L^2(\Omega)} = &\frac{1}{n!} \frac{  {u_0^2} C_{\kappa}^n}{(2\pi)^{nd}} \int_{[0,t]^n}\ud s \int_{\RR^{nd}} \ud \xi \: \prod_{i=1}^n (s_{\sigma(i+1)}-s_{\sigma(i)})^{2\beta-2}\\
  & \times  E_{\beta,\beta}^2 \left(-2^{-1}\nu (s_{\sigma(i+1)}-s_{\sigma(i)})^{\beta} |\xi_{\sigma(i)} + \cdots + \xi_{\sigma(1)} |^{\alpha} \right)  |\xi_i|^{\kappa - d}  \\
 =&  \frac{  {u_0^2} C_{\kappa}^n}{(2\pi)^{nd}} \int_{T_n(t)} \ud s\int_{\RR^{nd}}\ud \xi \: \prod_{i=1}^n (s_{i+1}-s_{i})^{2\beta-2}\\
  & \times  E_{\beta,\beta}^2 \left(-2^{-1}\nu (s_{i+1}-s_{i})^{\beta} |\xi_{i} + \cdots + \xi_{1} |^{\alpha} \right) |\xi_i|^{\kappa - d} \,.
\end{align*}
Then by the change of variable $\xi_i + \cdots \xi_1 = \eta_i$ and replacing $\RR^{nd}$ by $\RR_+^{nd}$, we obtain that
\begin{align*}
\|I_n(f_n(\cdot,\cdot,t,x))\|^2_{L^2(\Omega)} = \frac{  {u_0^2} C_{\kappa}^n}{(2\pi)^{nd}} &\int_{T_n(t)} \int_{\RR^{nd}} \prod_{i=1}^n (s_{i+1}-s_{i})^{2\beta-2}\\
  & \times  E_{\beta,\beta}^2 \left(- 2^{-1}\nu (s_{i+1}-s_{i})^{\beta} |\eta_{i} |^{\alpha} \right) |\eta_i-\eta_{i-1}|^{\kappa - d} \ud \xi_i \ud s_i\\
  \geq   \frac{  {u_0^2} C_{\kappa}^n}{(2\pi)^{nd}} & \int_{T_n(t)} \int_{\RR_+^{nd}} \prod_{i=1}^n (s_{i+1}-s_{i})^{2\beta-2}\\
  & \times  E_{\beta,\beta}^2 \left(- 2^{-1}\nu (s_{i+1}-s_{i})^{\beta} |\eta_{i} |^{\alpha} \right) |\eta_i-\eta_{i-1}|^{\kappa - d} \ud\xi_i \ud s_i \\
  \geq   \frac{  {u_0^2} C_{\kappa}^n}{(2\pi)^{nd}}& \int_{T_n(t)} \int_{\RR_+^{nd}} \prod_{i=1}^n (s_{i+1}-s_{i})^{2\beta-2}\\
  & \times  E_{\beta,\beta}^2 \left(- 2^{-1}\nu (s_{i+1}-s_{i})^{\beta} |\eta_{i} |^{\alpha} \right)  |\eta_i|^{\kappa - d} \ud\xi_i \ud s_i\,,
\end{align*}
where $\eta_0 = 0$.
Then with another change of variable $(\nu/2)^{1/\alpha}(s_{i+1}-s_i)^{\beta/\alpha} \eta_i \rightarrow \eta_i$,
and by the same reasoning as before, we obtain that
\begin{align*}
\|I_n(f_n(\cdot,\cdot,t,x))\|^2_{L^2(\Omega)}
\geq & \frac{  {u_0^2} C_{\kappa}^n}{(2\pi)^{nd}}  \int_{T_n(t)} \int_{\RR_+^{nd}} \prod_{i=1}^n (s_{i+1}-s_{i})^{2\beta-2 - \frac{\beta \kappa}{\alpha}} E_{\beta,\beta}^2 \left(- |\eta_{i} |^{\alpha} \right) |\eta_i|^{\kappa - d} \ud\xi_i \ud s_i \\
= &  \frac{  {u_0^2} C_{\kappa}^n}{(2\pi)^{nd}} \left( \frac{\widetilde{C}}{2^d} \right)^n (2/\nu)^{\frac{\kappa n }{\alpha}}\int_{T_n(t)}  \prod_{i=1}^n (s_{i+1}-s_{i})^{2\beta-2 - \frac{\beta \kappa}{\alpha}}  \ud s_i \\
=& \frac{ t^{n(2\beta -1 -\frac{\beta \kappa}{\alpha})} (2/\nu)^{\kappa n/\alpha}   {u_0^2}  C_{\kappa}^n \widetilde{C}^n (4\pi)^{-nd} C_*^n}{\Gamma(n(2\beta -1 -\frac{\beta \kappa}{\alpha})+1)}\,.
\end{align*}
Therefore, by the asymptotic property of the Mittag-Leffler function,
\begin{align*}
\EE\left[ u(t,x)^2\right] \geq &\sum_{n=0}^{\infty} \frac{   {u_0^2}  \left(C_{\kappa} \widetilde{C} (4\pi)^{-d} C_* \right)^n t^{n(2\beta -1 -\frac{\beta \kappa}{\alpha})} (2/\nu)^{\frac{\kappa n }{\alpha}}}{\Gamma(n(2\beta -1 -\frac{\beta \kappa}{\alpha})+1)} \\
\geq& c\:   {u_0^2} \exp \left ((C_{\kappa} \widetilde{C} (4\pi)^{-d} C_* (2/\nu)^{\kappa/\alpha} )^{\frac{1}{2\beta -1 -\beta \kappa/\alpha} } t \right)\,,
\end{align*}
for some positive constant $c=c(\alpha,\beta,\kappa)$.
This completes the proof of Theorem \ref{T:Rietz}.
\end{proof}

\section{Proof of Theorem \ref{beta.2}}\label{Sec:1beta2}

In this section, $C=C_{\alpha, \beta, \cdots} $ denotes  a positive constant, possibly dependent on $\alpha, \beta, d, \nu, \cdots$.
%It might differ from line to line.
\begin{lemma}\label{est.H} Assume that $\beta\in (0,2)$, $\alpha>0$ and $d\in\NN$.
  {Then there is a nonnegative constant $C_{\alpha,\beta,d}$ such that for all $0<\zeta<\min(d/\alpha, 2)$,
\[
\left|
\FoxH{2,1}{2,3}{z}
 {(1,1),\:(\beta,\beta)}{(d/2,\alpha/2),\:(1,1),\:(1,\alpha/2)}
\right|
 \leq C_{\alpha, \beta, d} \: \frac{z^\zeta}{z^{\zeta+1}+1},\qquad\text{for all $z \ge 0$.}
\]
}
% \item{ii)}\   If $d=\alpha$, then there is a  nonnegative constant $C_{\alpha,\beta}$
% such that
% \[
% \left|
% \FoxH{2,1}{2,3}{z}
%  {(1,1),\:(\beta,\beta)}{(d/2,d/2),\:(1,1),\:(1,d/2)}
% \right|
%  \leq C_{\alpha, \beta} \: \frac{z}{z^{2}+1},\qquad\text{for all $z \ge 0$.}
% \]
% \end{description}
\end{lemma}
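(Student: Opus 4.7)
The plan is to derive the leading asymptotic behavior of the H-function at $z\to 0^+$ and at $z\to+\infty$ from its Mellin--Barnes representation, and then to patch these bounds together using continuity on compact subintervals of $(0,\infty)$. Specifically, I would write
\[
H(z):=\FoxH{2,1}{2,3}{z}{(1,1),\:(\beta,\beta)}{(d/2,\alpha/2),\:(1,1),\:(1,\alpha/2)}
=\frac{1}{2\pi i}\int_L\frac{\Gamma(d/2+\alpha s/2)\,\Gamma(1+s)\,\Gamma(-s)}{\Gamma(\beta+\beta s)\,\Gamma(-\alpha s/2)}\,z^{-s}\,\ud s,
\]
where the vertical contour $L$ separates the poles of $\Gamma(-s)$ on the right from those of $\Gamma(1+s)$ and $\Gamma(d/2+\alpha s/2)$ on the left. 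Absolute convergence of the integral and the permissibility of the contour shifts below follow from the hypothesis $\beta<2$ (which guarantees $a^{*}=2-\beta>0$) together with Stirling's asymptotics for the quotient of Gammas.

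To analyze $z\to 0^+$, I would shift $L$ to the left. The first candidate pole is $s=-1$ coming from $\Gamma(1+s)$, but the factor $\Gamma(\beta+\beta s)=\Gamma(\beta(1+s))$ in the denominator also has a simple pole at $s=-1$, and the two cancel so that the residue vanishes. The next candidates are at $s=-d/\alpha$ (from $\Gamma(d/2+\alpha s/2)$) and at $s=-2$ (from $\Gamma(1+s)$), whose residues are generically nonzero; the resulting leading term is of order $z^{\min(d/\alpha,2)}$. Hence for any fixed $\zeta\in(0,\min(d/\alpha,2))$ one obtains $|H(z)|\le C\,z^{\zeta}$ uniformly on $(0,1]$.

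To analyze $z\to+\infty$, I would shift $L$ to the right and collect residues at $s=0,1,2,\ldots$ (the poles of $\Gamma(-s)$). At $s=0$ the simple pole of $\Gamma(-s)$ is exactly cancelled by the simple pole of $\Gamma(-\alpha s/2)$ in the denominator, their ratio tending to $\alpha/2$, so the would-be $z^{0}$ term does not appear. The first nonvanishing residue is at $s=1$ and yields a term of order $z^{-1}$ (in the exceptional cases $\alpha\in\{2,4,\ldots\}$ the factor $\Gamma(-\alpha/2)$ in the denominator also diverges, making even the $s=1$ residue vanish and giving strictly faster decay, so the bound is unaffected). This produces $|H(z)|\le C\,z^{-1}$ on $[1,\infty)$.

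Finally, $H$ is continuous on $(0,\infty)$ and therefore bounded on $[1/2,2]$. Combining the three estimates gives $|H(z)|\le C\min(z^{\zeta},z^{-1})$ on $(0,\infty)$, which is equivalent, up to a constant depending only on $\zeta$, to the claimed bound $Cz^{\zeta}/(z^{\zeta+1}+1)$. The principal technical obstacle is the rigorous justification of the two contour shifts, namely showing that the integrals along the horizontal segments closing the contours vanish in the limit; this is a routine Stirling-type estimate enabled by $\beta<2$. A secondary nuisance is the treatment of non-generic parameter values (for instance $d/\alpha\in\{1,2\}$ or $\beta=1$) where poles may coincide and produce logarithmic corrections, but in every such case the asymptotic estimate only improves, so the uniform bound stated in the lemma remains valid.
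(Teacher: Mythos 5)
Your proposal is correct and follows essentially the same route as the paper's proof: the paper likewise gets $O(1/z)$ at infinity from the asymptotic expansion with $a^{*}=2-\beta>0$ (the constant term vanishing for exactly the cancellation you describe at $s=0$), analyzes $z\to0$ by locating the first non-cancelled pole of the Mellin--Barnes integrand --- splitting into the generic case and the coincident cases $d=\alpha$ and $d=2\alpha$ --- and uses continuity of the H-function for intermediate $z$. The one imprecision is your claim that coincident poles make the asymptotic estimate ``only improve'': in the case $d=2\alpha$ the paper in fact finds the \emph{worse} leading term $O(z^{2}\log z)$, but this is still $O(z^{\zeta})$ for every $\zeta<2$, so your conclusion survives precisely because $\zeta$ is taken strictly below $\min(d/\alpha,2)$.
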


\begin{proof}
% Because $a^*=2-\beta>0$,   with further conditions  we can applies asymptotic expansions in equations (1.5.1), (1.8.1) or (1.8.2) of \cite{KilbasSaigo04H} to the above H-function.
We first note that condition \eqref{pole.con1} is satisfied.
Because $a^*=2-\beta>0$, we can apply Theorem \ref{T:AsyInfty} to obtain that
\[
\FoxH{2,1}{2,3}{z}
 {(1,1),\:(\beta,\beta)}{(d/2,\alpha/2),\:(1,1),\:(1,\alpha/2)} {  =}  O(1/z) ; \qquad z \rightarrow \infty.
 \]
As   for small $z$,  note that the poles of $\Gamma(1+s)$ are
\[
A:=\{-(1+k): k=0,1,2,\cdots\};
\]
and those of $\Gamma(d/2+\alpha s/2)$ are
\[
B:=\left\{-\frac{2l+d}{\alpha}: l=0,1,2,\cdots\right\}.
\]
To find the leading term when  $z\rightarrow 0$,   we need to find the first nonvanishing residue of $\cH^{2,1}_{2,3}(s) z^{-s}$ at poles $A\cup B$, where
\begin{align}\label{E:Hs}
\cH^{2,1}_{2,3}(s) = \frac{\Gamma(d/2+\alpha s/2)\Gamma(1+s)\Gamma(-s)}{\Gamma(\beta+\beta s)\Gamma(-\alpha s/2)}.
\end{align}

{\bigskip\bf\noindent Case I.~~}
When $d\ne \alpha$ and $d\ne 2\alpha$, then the leading pole ($l=0$) in $B$ does not coincide with the first two poles ($k=0,1$) of $A$. Hence,
the asymptotic expansion in Theorem \ref{T:AsypZero} (1) implies that
\[
 \FoxH{2,1}{2,3}{z}
 {(1,1),\:(\beta,\beta)}{(d/2,\alpha/2),\:(1,1),\:(1,\alpha/2)}
 {  =}  h_{10}^* z^{d/\alpha} +  h_{20}^* z + h_{11}^* z^2 + O(z^2) +O(z^{d/\alpha}) ;\qquad z \rightarrow 0.
\]
Here $h_{ij}^*$ are defined in \eqref{E:h*jl}.
Notice that $h_{20}^*=0$ due to the presence of the parameter $(\beta,\beta)$.
Hence,
\[
\FoxH{2,1}{2,3}{z}
 {(1,1),\:(\beta,\beta)}{(d/2,\alpha/2),\:(1,1),\:(1,\alpha/2)}
 {  =}  O(z^2)+O(z^{d/\alpha}) ;\qquad z \rightarrow 0.
\]

{\bigskip\bf\noindent Case II.~~}
Now we consider the case when $d=\alpha$.
The first pole in $A\cup B$ is $s=-1$, which is of order $2$.
%
% $A\cap B\ne \phi$, i.e.,
% \begin{align}\label{E:PolesCoincide}
% (1+k)\alpha  =  d+2l,\quad \text{ for some $l, k=0, 1, 2, \dots$}.
% % \end{align}
% To find the expansion at origin, we need to compare the smallest pole few poles of $\Gamma((1+s)d/2)\Gamma(1+s)$ to the left of the origin.
% The poles of $\Gamma(1+s)$ are $-1, -2,\cdots$, while
% the poles of $\Gamma((1+s)d/2)$ are $-1,-1$
% Hence,
% we only need to consider the case where $l=0$, which corresponds to the first pole of $\Gamma((1+s)d/2)\Gamma(1+s)$.
% If $k=0$, that is $d=\alpha$, then $s=-1$ is a pole (to the left of the origin) of order $2$ for  $\Gamma((1+s)d/2)\Gamma(1+s)$.
Now we compute  the mentioned residue:
\begin{align*}
\mathop{\text{Res}}_{s=-1}\left[\cH_{2,3}^{2,1}(s)z^s \right]
&=\lim_{s\rightarrow -1}\left[(s+1)^2 \cH_{2,3}^{2,1}(s) z^{-s}\right]'\\
&=\lim_{s\rightarrow -1}\left[(s+1)^2 \cH_1^*(s) \cH_2^*(s) z^{-s}\right]'\\
&=
\lim_{s\rightarrow -1}
 z^{-s} \left[\cH_1^*(s)'\cH_2^*(s)+\cH_1^*(s)\cH_2^*(s)' - \cH_1^*(s)\cH_2^*(s) \log z \right],
\end{align*}
where $\cH_{2,3}^{2,1}(s)$ is defined in \eqref{E:Hs} and
\[
\cH_1^*(s)=(s+1)^2\Gamma((1+s)d/2)\Gamma(1+s)
\quad
\text{and}
\quad
\cH_2^*(s) = \frac{\Gamma(-s)}{\Gamma(\beta+\beta s)\Gamma(-ds/2)}.
\]
Now simple calculations show that
\begin{align*}
\cH_1^*(-1) &= \lim_{s\rightarrow -1} \cH_1^*(s)=\frac{2}{d}= \lim_{s\rightarrow-1}\frac{(1+s)^2}{((1+s)d/2)(1+s)}=\frac{2}{d},\\
\cH_2^*(-1) &=\lim_{s\rightarrow -1} \cH_2^*(s) = 0,\\
\left.\frac{\ud }{\ud s}\cH_2^*(s)\right|_{s=-1} &= \lim_{s\rightarrow -1} \frac{\Gamma(-s)}{\Gamma(-ds/2)} \left(\frac{1}{\Gamma(\beta(1+ s))}\right)'\\
&=\frac{\Gamma(1)}{\Gamma(d/2)}\lim_{s\rightarrow -1}- \frac{\psi(\beta(1+ s))}{\Gamma(\beta(1+ s))}\\
&=\frac{\beta}{\Gamma(d/2)}\,,
\end{align*}
where $\psi(z)$ is the digamma function and the last limit is due to (5.7.6) and (5.7.1) of \cite{NIST2010}.
Thus,
\begin{align*}
\cH_1^*(-1)\cH_2^*(-1) =
\cH_1^*(-1)' \cH_2^*(-1)=0\quad\text{and}\quad
\cH_1^*(-1) \cH_2^*(-1)' = \frac{2\beta}{d\Gamma(d/2)} =\frac{\beta}{\Gamma(1+d/2)}.
\end{align*}
Hence,
\[
\mathop{\text{Res}}_{s=-1}\left[\cH_{2,3}^{2,1}(s)z^s \right]=\frac{\beta\: z}{\Gamma(1+d/2)}.
\]
Therefore, by the definition of the Fox H-function,
\[
\FoxH{2,1}{2,3}{z}
 {(1,1),\:(\beta,\beta)}{(d/2,d/2),\:(1,1),\:(1,d/2)}
 {  =}  O(z);\quad z \rightarrow 0.
\]

{\bigskip\bf\noindent Case III.~~}
As for the case $d=2\alpha$, the first pole in $A\cup B$ is $s=-1$, which is a simple pole.
As calculated before, the residue at this pole is vanishing, $h_{20}^* z\equiv 0$.
Hence, we need to consider the next pole at $s=-2$,
which is a pole of order $2$.
Use the asymptotic expansion \eqref{E:HZeroLog} to obtain that
\begin{align*}
% \label{E_:z^2}
\FoxH{2,1}{2,3}{z}
 {(1,1),\:(\beta,\beta)}{(d/2,\alpha/2),\:(1,1),\:(1,\alpha/2)} {  =}   O(z^2\log z) ;\qquad z \rightarrow 0.
\end{align*}
Finally, because $a^*>0$, by Theorem \ref{T:1.2(3)}, our H-function is a continuous function for $z>0$.
With this, we complete the proof of Lemma \ref{est.H}.
\end{proof}

\begin{lemma}\label{basic.ineq-1}
  {For all $\al\in (0, 2]$, $d\in\NN$ and $\kappa < \min\{2\alpha, d\}$,
one can find $\zeta<\min(d/\alpha,2)$ and a nonnegative constant $C$ 
(independent of $a$)  such that
\[
\int_{\RR^{ d}} |x -a|^{-\kappa}   \Theta(x) dx\le C<\infty\,\qquad\text{for all $a\in\R^d$,}
\]
where
\[
\Theta(x)=\frac{1}{ |x |^{\alpha+d}+ |x |^{d-\zeta \alpha}}\,.
\]
}
% .  Denote
% Then there is a constant $C$ independent of $a\in \RR$ such that
\end{lemma}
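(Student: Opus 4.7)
The whole argument hinges on a careful choice of the parameter $\zeta$. Since the hypothesis $\kappa<\min(2\alpha,d)$ gives $\kappa/\alpha<\min(2,d/\alpha)$, I would pick $\zeta$ with $\kappa/\alpha\le\zeta<\min(2,d/\alpha)$. For such $\zeta$ both exponents $\alpha+d$ and $d-\zeta\alpha$ in the denominator of $\Theta$ are positive, so the radial function $\Theta$ is strictly decreasing in $|x|$ and enjoys the two-sided control
\[
\Theta(x)\;\asymp\;\min\bigl(|x|^{\zeta\alpha-d},\,|x|^{-\alpha-d}\bigr),
\]
the first regime being dominant for $|x|\le 1$ and the second for $|x|\ge 1$. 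In particular $\Theta\in L^1(\mathbb{R}^d)$ and is locally integrable at the origin with integral of order $r^{\zeta\alpha}$ on $\{|x|\le r\}$ for small $r$.

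The integral to be controlled is the convolution $(|\cdot|^{-\kappa}*\Theta)(a)$, which I would estimate by the classical three-region splitting around the two singularities $x=0$ and $x=a$. The case $a=0$ reduces directly to $\int |x|^{-\kappa}\Theta(x)\,dx<\infty$ by the asymptotics above together with $\kappa<\zeta\alpha$ and $\kappa<d$, so assume $r:=|a|>0$ and decompose $\mathbb{R}^d=A_1\cup A_2\cup A_3$, with $A_1=\{|x-a|\le r/2\}$, $A_2=\{|x|\le r/2\}$, and $A_3$ the complement. On $A_2$ I use $|x-a|^{-\kappa}\le(r/2)^{-\kappa}$ and integrate $\Theta$ on $\{|x|\le r/2\}$, which by the power-law bounds yields a contribution of size $\min(r^{\zeta\alpha-\kappa},r^{-\kappa})$; this is uniformly bounded in $r$ thanks to $\zeta\alpha\ge\kappa$ for small $r$ and the trivial bound $r^{-\kappa}\to 0$ for large $r$. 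On $A_1$ I would pull the radially decreasing $\Theta$ out at its largest value $\Theta(r/2)$ and integrate $|x-a|^{-\kappa}$ over the ball of radius $r/2$, giving a contribution of size $r^{d-\kappa}\Theta(r/2)$, which under the two-sided asymptotics of $\Theta$ becomes $O(r^{\zeta\alpha-\kappa})$ for small $r$ and $O(r^{-\alpha-\kappa})$ for large $r$.

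Finally, on $A_3$ I would split once more according to whether $|x|\le 2r$ or $|x|>2r$. The first sub-region is handled exactly as $A_1$ using $\Theta(x)\le\Theta(r/2)$ and a crude volume bound, whereas on the second sub-region $|x-a|\ge|x|/2$, reducing the task to bounding $\int_{|x|>2r}|x|^{-\kappa}\Theta(x)\,dx$; this is finite uniformly in $r$ because $\zeta\alpha-\kappa>0$ makes the integrand integrable at the origin and the $|x|^{-\alpha-d}$ decay makes it integrable at infinity. Collecting the four pieces yields the desired $a$-uniform bound. The only substantive point in the argument is arranging $\zeta\alpha\ge\kappa$ together with $\zeta<\min(d/\alpha,2)$; this is precisely what the hypothesis $\kappa<\min(2\alpha,d)$ makes possible, and it is the sole source of the upper bound on $\kappa$.
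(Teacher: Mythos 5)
Your argument is correct in substance but follows a genuinely different route from the paper. You use the classical three-region decomposition adapted to the scale $r=|a|$ (balls of radius $r/2$ around each singularity plus the complement, with a further split at $|x|=2r$), together with the two-sided asymptotics $\Theta(x)\asymp\min(|x|^{\zeta\alpha-d},|x|^{-\alpha-d})$; this tracks the precise power of $r$ in each piece and as a by-product shows the convolution actually decays as $|a|\to\infty$. The paper instead splits at the fixed radius $|x|=1$, bounds $\Theta$ by the dominant power on each half, and then disposes of the two singularities by the elementary symmetrization trick (on the set $|x|\le|x-a|$ replace $|x-a|^{-\kappa}$ by $|x|^{-\kappa}$, and on the complementary set replace $|x|^{-(d-\zeta\alpha)}$ by $|x-a|^{-(d-\zeta\alpha)}$), reducing everything to two or three translation-invariant integrals. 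The paper's version is shorter and avoids any case analysis in $r$, while yours is more quantitative; both rest on exactly the same arithmetic, namely choosing $\zeta$ so that $\kappa<\zeta\alpha$ and $\zeta<\min(d/\alpha,2)$, which the hypothesis $\kappa<\min(2\alpha,d)$ permits.

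One small inconsistency to fix: you choose $\zeta$ with $\kappa/\alpha\le\zeta$, but your treatment of the region $\{|x|>2r\}\cap A_3$ explicitly requires $\zeta\alpha-\kappa>0$ for integrability of $|x|^{-\kappa}\Theta(x)$ near the origin (with equality the integral over $\{2r<|x|<1\}$ behaves like $\log(1/r)$ and the bound is not uniform in $a$). Since $\kappa/\alpha<\min(2,d/\alpha)$ is strict, you should simply take $\zeta\in(\kappa/\alpha,\min(2,d/\alpha))$; with that strict choice every step you outline goes through.
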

\begin{proof} We divide the integral domain into $\{|x|\leq 1\}$ and $\{|x| >1\}$.  Over  the domain
 $\{|x|\leq 1\}$, we have
 \begin{align*}
\int_{|x|\leq 1} &|x-a|^{-\kappa}\Theta(x)\ud x\leq  \int_{|x |\leq 1} |x-a|^{-\kappa}\frac{1}{  |x |^{d-\zeta \alpha}}\ud x \\
&=\int_{|x |\leq 1,\: |x |\leq |x-a|} |x-a|^{-\kappa}\frac{1}{  |x |^{d-\zeta \alpha}}\ud x
 +\int_{|x-a|<|x |\leq 1} |x-a|^{-\kappa}\frac{1}{  |x |^{d-\zeta \alpha}}\ud x \\
&\le \int_{|x |\leq 1,\: |x |\leq |x-a|} |x |^{-\kappa}\frac{1}{  |x |^{d-\zeta \alpha}}\ud x
 +\int_{|x-a|<|x |\leq 1} |x-a|^{-\kappa}\frac{1}{  |x-a |^{d-\zeta \alpha}}\ud x \\
&\leq  \:2 \int_{|z|\leq 1} \frac{1}{  |z|^{\kappa+d-\zeta \alpha}}\ud z
\leq  C.
\end{align*}
The last inequality is valid since  we can choose  $\zeta$ sufficiently close  to $\min(d/\alpha, 2)$ so that  $\kappa+d-\zeta \alpha<d$.
On the other hand, over  the domain
 $\{|x|> 1\}$, we have
\begin{align*}
\int_{|x |>1} &|x-a|^{-\kappa}\Theta(x  )\ud x  \leq  \int_{|x |> 1} |x-a|^{-\kappa}\frac{1}{ |x |^{\alpha+d}}\ud x \\
\leq &    \int_{ |x-a|\geq|x |> 1} |x-a|^{-\kappa}\frac{1}{ |x |^{\alpha+d}}\ud x  + \int_{ |x |>|x-a|> 1} |x-a|^{-\kappa}\frac{1}{ |x |^{\alpha+d}}\ud x \\
 & +  \int_{ |x |> 1\geq |x-a|} |x-a|^{-\kappa}\frac{1}{ |x |^{\alpha+d}}\ud x \\
 \leq&    \: 2 \int_{|z|> 1} \frac{1}{ |z|^{\alpha+d}}\ud z+\int_{ |z|\leq 1} |z|^{-\kappa}\ud z
 \leq  C.
\end{align*}
Note that the above   constant  $C$    {does not depend on $a$}.
\end{proof}

\begin{lemma}\label{basic.ineq}
Assume $\kappa < \min\{2\alpha, d\}$. Then for all $s, r>0$ and $  x_2\,,  y_2\in\RR^d$, we have that
\[
\int_{\RR^{2d}} \left|Y(s, x_1-x_2) Y(r, y_1-y_2)\right|\: |x_1-y_1|^{-\kappa} \ud x_1 \ud y_1
\leq C_{\alpha, \beta, d, \nu, \kappa} \:(s\: r)^\theta,
\]
where $C$   {does not depend on $  x_2$ and $y_2\in\RR^d$}, and
\[
\theta:=\beta-1-\frac{\beta}{2\alpha}\kappa.
\]
\end{lemma}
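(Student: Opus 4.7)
My plan is to convert the Fox H-function in the explicit formula \eqref{E:Yab} for $Y$ into a scale-invariant pointwise envelope via Lemma \ref{est.H}, rescale the two spatial integration variables so that the $s$- and $r$-dependence is absorbed into a clean multiplicative factor $(sr)^{\beta-1}$, and then use Lemma \ref{basic.ineq-1} to handle the Riesz factor uniformly in the shift $w=x_2-y_2$. A symmetrization step at the end produces the balanced exponent $\theta=\beta-1-\beta\kappa/(2\alpha)$.

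Concretely, first I would choose $\zeta$ satisfying $\kappa/\alpha<\zeta<\min(d/\alpha,2)$, which is possible since $\kappa<\min(2\alpha,d)$, and apply Lemma \ref{est.H} with this $\zeta$ to the H-function in \eqref{E:Yab}. After pulling powers of $|x|$ and $t^\beta$ through, the bound takes the clean scaling form
\[
  |Y(t,x)|\le C\,t^{\beta-1-d\beta/\alpha}\,\Theta\!\bigl(x/(c t^\beta)^{1/\alpha}\bigr),\qquad c:=2^{\alpha-1}\nu,
\]
with $\Theta$ exactly the function appearing in Lemma \ref{basic.ineq-1}. Substituting $\tilde x=(x_1-x_2)/(cs^\beta)^{1/\alpha}$, $\tilde y=(y_1-y_2)/(cr^\beta)^{1/\alpha}$ in the double integral, the Jacobians $s^{d\beta/\alpha}r^{d\beta/\alpha}$ exactly cancel the $(sr)^{-d\beta/\alpha}$ prefactors from the bound on $|Y|$, so that the left-hand side becomes $(sr)^{\beta-1}$ times
\[
  J:=\int_{\R^{2d}}\Theta(\tilde x)\,\Theta(\tilde y)\,\bigl|c^{1/\alpha}s^{\beta/\alpha}\tilde x-c^{1/\alpha}r^{\beta/\alpha}\tilde y+w\bigr|^{-\kappa}\,d\tilde x\,d\tilde y.
\]

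To control $J$ I would fix $\tilde y$ and integrate in $\tilde x$ first. Pulling $s^{-\beta\kappa/\alpha}$ out of the Riesz factor reduces the inner integral to $\int\Theta(\tilde x)|\tilde x-a|^{-\kappa}d\tilde x$ for some shift $a=a(\tilde y,w)$, which is bounded by a constant independent of $a$ thanks to Lemma \ref{basic.ineq-1}. The outer integral $\int\Theta(\tilde y)\,d\tilde y$ is finite from the behavior of $\Theta$ at $0$ and $\infty$ (controlled by $\zeta>0$ and $\alpha>0$, respectively). This yields the asymmetric bound $J\le C\,s^{-\beta\kappa/\alpha}$; performing the two integrations in the opposite order gives $J\le C\,r^{-\beta\kappa/\alpha}$. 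Combining via $\min(a,b)\le\sqrt{ab}$ gives $J\le C(sr)^{-\beta\kappa/(2\alpha)}$, and multiplication by $(sr)^{\beta-1}$ delivers the claimed $C(sr)^{\theta}$, with a constant independent of $x_2,y_2$.

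The main technical point, which I expect to be the principal obstacle rather than any single computation, is the parameter bookkeeping: one must verify that the range $\kappa<\min(2\alpha,d)$ is exactly what is needed to pick a single $\zeta$ simultaneously satisfying the hypothesis of Lemma \ref{est.H} and the inequality $\kappa+d-\zeta\alpha<d$ used inside the proof of Lemma \ref{basic.ineq-1}, and that the Jacobians and prefactors really combine to give $(sr)^{\beta-1}$ with no residual $x_2$- or $y_2$-dependence. Once these parameter conditions are arranged and the uniformity in the shift from Lemma \ref{basic.ineq-1} is invoked, the symmetrization trick upgrades the one-sided bounds to the symmetric $(sr)^{\theta}$ claimed in the statement.
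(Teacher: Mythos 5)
Your proposal is correct and follows essentially the same route as the paper: envelope $|Y|$ via Lemma \ref{est.H} in the scale-invariant form $Ct^{\beta-1-d\beta/\alpha}\Theta(x/t^{\beta/\alpha})$, rescale to extract $(sr)^{\beta-1}$, bound the inner integral uniformly in the shift by Lemma \ref{basic.ineq-1} to get the asymmetric bounds $C\,r^{\beta-1}s^{\beta-1-\kappa\beta/\alpha}$ and its mirror image, and symmetrize with $\min(a,b)\le\sqrt{ab}$. Your explicit requirement $\kappa/\alpha<\zeta<\min(d/\alpha,2)$ is exactly the parameter choice implicit in the paper's proof of Lemma \ref{basic.ineq-1}, so no gap remains.
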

\begin{proof} We use the notation $\Theta(x)$ in the previous lemma.
By Lemma \ref{est.H} and the expression of $Y$ through Fox H-function \eqref{E:Yab},
  {we see that
for any $\zeta<\min( d/\al, 2)$,}
there  is a constant $C_{\alpha, \beta, d, \nu, \zeta}$
such that
\begin{eqnarray*}
\left|Y(t,x)\right|\
&\le& C_{\alpha, \beta, d, \nu, \zeta} |x|^{-d}t^{\be-1} \frac{|\frac{x}{t^{\be/\al}}|^{\al \zeta}}{
|\frac{x}{t^{\be/\al}}|^{\al \zeta+\al}+1}\\
&=& C_{\alpha, \beta, d, \nu, \zeta}  t^{\be-1-\frac{\be d}{\al}} \Theta\left(\frac{x}{t^{\be/\al}}\right)\,.
\end{eqnarray*}
Therefore,   {by Lemma \ref{basic.ineq}}, we have
\begin{eqnarray*}
&&\int_{\RR^{2d}} \left|Y(s, x_1-x_2) Y(r, y_1-y_2)\right|\:|x_1-y_1|^{-\kappa}\ud x_1 \ud y_1  \\
&&\qquad \le C_{\alpha, \beta, d, \nu, \zeta}   (sr) ^{\be-1-\frac{\be d}{\al}}
\int_{\RR^{2d}} \Theta\left(\frac{x_1-x_2}{s^{\be/\al}}\right)
 \Theta\left(\frac{y_1-y_2}{r^{\be/\al}}\right) |x_1-y_1|^{-\kappa}\ud x_1 \ud y_1  \\
&&\qquad \le C_{\alpha, \beta, d, \nu, \zeta}   r ^{\be-1-\frac{\be d}{\al}}s^{\be-1-\kappa \be/\al}
 \int_{\RR^d} \left(\int_{\RR^d} \left|z_1-\frac{y_1-x_2}{s^{\be/\al}}
 \right|^{-\kappa}\Theta(z_1  )\ud z_1\right) \Theta\left(\frac{y_1-y_2}{r^{\be/\al}}\right)\ud y_1\\
 &&\qquad \le C_{\alpha, \beta, d, \nu, \zeta}   r ^{\be-1-\frac{\be d}{\al}}s^{\be-1-\kappa \be/\al}
 \int_{\RR^d}   \Theta\left(\frac{y_1-y_2}{r^{\be/\al}}\right)\ud y_1\\
 &&\qquad \le C_{\alpha, \beta, d, \nu, \zeta}   r ^{\be-1 }s^{\be-1-\kappa \be/\al} \,.
\end{eqnarray*}
By symmetry, we also have
 \[
\iint_{\RR^{2d}} \left|Y(s, x_1-x_2) Y(r, y_1-y_2)\right|\: |x_1-y_1|^{-\kappa}\ud x_1 \ud y_1  \leq
C_{\alpha, \beta, d, \nu, \zeta}   s ^{\be-1 } r^{\be-1-\kappa \be/\al} \,.
\]
Now from the fact that   $c\le a$ and $c\le b$ implies $c\le \sqrt{ab}$,
the lemma follows.
\end{proof}

The following lemma is from \cite[Theorem 3.5]{BC3}.
\begin{lemma}
\label{stint}
  {Let $T_n$ be the simplex defined in \eqref{E:Tn}. Then for all $h>-1$, it holds that}
% Let $h>-1$  and
% let $T_n(t)=\{s=(s_1, \ldots,s_n):\: 0<s_1<s_2 < \ldots <s_n<t\}$. Then
\[
 \int_{T_n(t)} [(t-s_n)(s_n-s_{n-1}) \ldots
(s_2-s_1)]^{h} \ud  s =\frac{\Gamma(1+h)^{n}}{\Gamma(n(1+h)+1)}
t^{n(1+h)}\,.
\]
\end{lemma}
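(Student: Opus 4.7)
The plan is to prove the identity by induction on $n$, with the Beta function supplying the inductive step. Denote the left-hand side by $I_n(t)$, with the convention $s_{n+1}:=t$, so that the integrand is $\prod_{i=1}^{n}(s_{i+1}-s_i)^h$. For $h>-1$ every integral below converges.

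For the base case $n=1$, $T_1(t)=(0,t)$ and
\[
I_1(t)=\int_0^t (t-s_1)^h\ud s_1=\frac{t^{h+1}}{h+1}=\frac{\Gamma(1+h)}{\Gamma(h+2)}\,t^{h+1},
\]
which matches the stated formula.

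For the inductive step, I would integrate out the largest coordinate $s_n$ last. Since the factor $(t-s_n)^h$ depends only on $s_n$, and $(s_1,\dots,s_{n-1})\in T_{n-1}(s_n)$ with the remaining part of the integrand being exactly $\prod_{i=1}^{n-1}(s_{i+1}-s_i)^h$, Fubini gives
\[
I_n(t)=\int_0^t (t-s_n)^h\,I_{n-1}(s_n)\,\ud s_n.
\]
Applying the induction hypothesis $I_{n-1}(s_n)=\Gamma(1+h)^{n-1}\,s_n^{(n-1)(1+h)}/\Gamma((n-1)(1+h)+1)$ and the classical Beta integral
\[
\int_0^t (t-s)^h\,s^{(n-1)(1+h)}\,\ud s
=t^{n(1+h)}\,\frac{\Gamma(h+1)\,\Gamma((n-1)(1+h)+1)}{\Gamma(n(1+h)+1)},
\]
the factors $\Gamma((n-1)(1+h)+1)$ cancel, leaving exactly $\Gamma(1+h)^n\,t^{n(1+h)}/\Gamma(n(1+h)+1)$, which closes the induction.

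There is no genuine obstacle: the identity is classical, and the only thing to verify is that the decomposition $T_n(t)=\{0<s_n<t\}\times T_{n-1}(s_n)$ is measure-theoretically correct (it is, up to a null set, since the boundary $\{s_{n-1}=s_n\}$ has measure zero) and that the convergence of the Beta integral requires both exponents to exceed $-1$; here $h>-1$ by hypothesis and $(n-1)(1+h)\ge 0$, so no additional constraint is imposed. As an alternative route, one may instead perform the change of variables $w_i=s_{i+1}-s_i$ ($i=1,\dots,n$), transforming $I_n(t)$ into the Dirichlet-type integral $\int_{w_i>0,\,\sum w_i<t}\prod_i w_i^h\,\ud w$, which evaluates to the same ratio of Gamma functions by the Dirichlet formula; the inductive proof above is simply the unfolded form of this.
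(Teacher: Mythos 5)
Your proof is correct. Note that the paper itself does not prove this lemma at all: it simply cites \cite[Theorem 3.5]{BC3} (Balan--Tudor), so there is no in-paper argument to compare against. Your induction on $n$, peeling off the outermost variable $s_n$ and reducing the step to the Beta integral $\int_0^t (t-s)^h s^{(n-1)(1+h)}\,\ud s = t^{n(1+h)}\,\Gamma(h+1)\Gamma((n-1)(1+h)+1)/\Gamma(n(1+h)+1)$, is the standard self-contained verification; the exponents line up exactly ($h+1+(n-1)(1+h)=n(1+h)$), the base case checks out, and the convergence discussion is adequate since $(n-1)(1+h)\ge 0>-1$. Your closing remark that the substitution $w_i=s_{i+1}-s_i$ turns the integral into a Dirichlet-type integral over the simplex $\{w_i>0,\ \sum_i w_i<t\}$ is also apt --- indeed this is precisely the form in which the quantity appears in the proof of Theorem \ref{beta.2}, where the lemma is invoked. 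Either route is fine; yours has the advantage of making the paper self-contained at this point.
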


\bigskip
\begin{proof}[Proof of Theorem \ref{beta.2}]
Following  the same notation and arguments as the proof of Theorem \ref{T:ExUni} until \eqref{E:fnNorm}, we have 
% As in section \ref{Sec:ExUni} the proof of Theorem \ref{T:ExUni},  $J_0(t,x)$ is defined in \eqref{E:J0} is the solution to the homogeneous equation.
% % By our assumption on the initial condition, we know that $J_0(t,x)$ is bounded uniformly in $x$.
% \begin{align*}
%  f_n(s_1, x_1,\cdots, s_n, x_n, t, x)=g_n(s_1, x_1,\cdots, s_n, x_n, t, x)J_0(s_{\sigma(1)}, x_{\sigma(1)}),
% \end{align*}
% where
% \[
% g_n(s_1, x_1,\cdots, s_n, x_n, t, x)=\frac{1}{n!}Y(t-s_{\sigma(n)}, x-x_{\sigma(n)} )\cdots Y(s_{\sigma(2)}-s_{\sigma(1)}, x_{\sigma(2)}-x_{\sigma(1)}) \,,
% \]
% and $\sigma$ denotes a permutation of  $ \{1,2,\cdots, n\}$ such that
% $0<s_{\sigma(1)}<\cdots<s_{\sigma(n)}<t$.
% Fix $t>0$ and $x\in \mathbb{R}^d$, set $ f_n(s, y, t, x)=f_n(s_1, y_1,\cdots, s_n, y_n, t, x).$
% Then we have that
% \begin{multline*}
% n! \| f_n(\cdot,\cdot,t,x)\|^2_{\mathcal{H}^{\otimes n}}\\
% = n!\int_{[0,t]^{2n}} \ud s \ud r\int_{\mathbb{R}^{2nd}}\ud y\ud z\: f_n(s, y, t, x)f_n(r, z, t, x)\prod_{i=1}^n\Lambda(y_i-z_i)\prod_{i=1}^n\gamma(s_i-r_i),
% \end{multline*}
% where $\ud y=\ud y_1 \cdots \ud y_n$, the differentials $\ud z$, $\ud s$ and $\ud r$ are defined similarly.
%
{  
\begin{multline*}
n! \| f_n(\cdot,\cdot,t,x)\|^2_{\mathcal{H}^{\otimes n}}\\
\leq C\frac{1}{n!}\int_{[0,t]^{2n}} \ud s\ud r\int_{\mathbb{R}^{2nd}}\ud y\ud z\: g_n(s, y, t, x)g_n(r, z, t, x)\prod_{i=1}^n\Lambda(y_i-z_i)\prod_{i=1}^n\gamma(s_i-r_i).
\end{multline*}
Furthermore,  by Cauchy-Schwarz inequality, we obtain 
\begin{align*}
\int_{\mathbb{R}^{2nd}}\ud y\ud z\: &g_n(s, y, t, x)g_n(r, z, t, x)\prod_{i=1}^n\Lambda(y_i-z_i)\\
\leq&\quad \left\{\int_{\mathbb{R}^{2nd}}\ud y\ud z\: g_n(s, y, t, x)g_n(s, z, t, x)\prod_{i=1}^n\Lambda(y_i-z_i)\right\}^{1/2}\\
&\times\left\{\int_{\mathbb{R}^{2nd}}\ud y\ud z\: g_n(r, y, t, x)g_n(r, z, t, x)\prod_{i=1}^n\Lambda(y_i-z_i)\right\}^{1/2}
\end{align*}
Applying Lemma \ref{basic.ineq} to the above two integrals, we have

\[
\int_{\mathbb{R}^{2nd}}\ud y\ud z\: g_n(s, y, t, x)g_n(r, z, t, x)\prod_{i=1}^n\Lambda(y_i-z_i)
\leq C^n_{\alpha, \beta, d, v, \kappa }(\phi(s)\phi(r))^\theta,
\]}
where
\[
 \phi(s) :=\prod_{i=1}^{n}(s_{\sigma(i+1)}- s_{\sigma(i)})  {\qquad\text{and}\qquad} \phi(r): = \prod_{i=1}^{n} (r_{\rho(i+1)}- r_{\rho(i)}),
\]
with
\[
0<s_{\sigma(1)}<s_{\sigma(2)}<
\ldots < s_{\sigma(n)} \quad \text{and} \quad 0<r_{\rho(1)}<r_{\rho(2)}< \ldots <
r_{\rho(n)} .
\]
Hence,
\begin{align*}
n! \| f_n(\cdot,\cdot,t,x)\|^2_{\mathcal{H}^{\otimes n}}
&\leq  \frac{C^n_{\alpha, \beta, d, v, \kappa }}{n!}\int_{[0,t]^{2n}} \prod_{i=1}^n \gamma(s_i-r_i) (\phi(s)\phi(r))^\theta    \ud s \ud r\\
&\leq  \frac{C^n_{\alpha, \beta, d, \nu, \kappa}}{n!} \frac{1}{2} \int_{[0,t]^{2n}} \prod_{i=1}^n \gamma(s_i-r_i) \left(\phi(s) ^{2\theta} + \phi(r)^{2\theta} \right)  \ud s \ud r\\
&=  \frac{C^n_{\alpha, \beta, d, \nu, \kappa}}{n!} \int_{[0,t]^{2n}} \prod_{i=1}^n \gamma(s_i-r_i) \phi(s) ^{2\theta} \ud s \ud r\\
&\leq \frac{C^n_{\alpha, \beta, d, \nu, \kappa} C_t^n}{n!} \int_{[0,t]^n} \phi(s)^{2\theta} \ud s\\
&=  C^n_{\alpha, \beta, d, \nu, \kappa} C_t^n  \int_{T_n(t)} \phi(s)^{2\theta} \ud s\\
&=  \frac{C^n_{\alpha, \beta, d, \nu, \kappa} C_t^n \Gamma(2\theta +1)^n t^{(2\theta+1)n}}{\Gamma((2\theta+1)n+1)}\,,
\end{align*}
where $C_t$ is defined in \eqref{E:Ct}.
Therefore,
\[
n! \| f_n(\cdot,\cdot,t,x)\|^2_{\mathcal{H}^{\otimes n}}\leq  \frac {C^n_{\alpha, \beta, d, \nu, \kappa} C_t^n}{\Gamma((2\theta+1)n+1)}\:,
\]
and
$\sum_{n\ge 0} n! \| f_n(\cdot,\cdot,t,x)\|^2_{\mathcal{H}^{\otimes n}}$ converges if $\theta>-1/2$.
Finally, the condition $\theta>-1/2$,
which is equivalent to $\kappa<2\alpha-\alpha/\beta$,
guarantees both condition $\theta> -1$  in Lemma \ref{stint} and the assumption $\kappa <2\alpha$ used in Lemma \ref{basic.ineq}.
This completes the proof of Theorem \ref{beta.2}.
\end{proof}

\appendix
\section{Fox H-function} \label{Sec:H}

\begin{definition}\label{D:H}
Let  $m, n, p, q$ be integers such that  $0\leq m\leq q,  0\leq n\leq p$.  Let
  $a_i,  b_i\in \mathbb{C}$ be complex numbers and
  let $\alpha_j, \beta_j$ be positive numbers,  $i=1, 2, \dots,  p$ and $j=1, 2, \dots, q$.
  Let the   set of poles of the gamma functions $\Gamma(b_j+\beta_js)$ doesn't intersect with that of the gamma functions $\Gamma(1-a_i-\alpha_is)$,
namely,
\begin{align}\label{E:poles}
\bigg\{b_{jl}=\frac{-b_j-l}{\beta_j},  l =0, 1, \cdots\bigg\}\bigcap \bigg\{a_{ik}=\frac{1-a_i+k}{\alpha_i},  k=0, 1, \cdots\bigg\}=\emptyset,
\end{align}
for all $i=1, 2, \dots,  p$ and $ j=1, 2,\dots,  q$.
Denote
\[
\mathcal{H}^{m,n}_{p,q}(s):=\frac{\prod_{j=1}^m \Gamma(b_j+\alpha_js)\prod_{i=1}^n\Gamma(1-a_i-\alpha_is)}{\prod_{i=n+1}^p\Gamma(a_j+\alpha_is)\prod_{j=m+1}^q \Gamma(1- b_j-\alpha_js)}\:.
\]
The {\em Fox H-function}
\begin{align}\label{E:FoxH}
H^{m,n}_{p,q}(z)\equiv H^{m,n}_{p,q}\bigg[z \bigg|\begin{array}{ccc}
(a_1, \alpha_1) & \cdots & (a_p, \alpha_p)\\
(b_1, \beta_1) & \cdots & (b_q, \beta_q)
\end{array} \bigg]
\end{align}
is defined by   the following integral
\begin{equation}\label{2.1}
% %H^{m,n}_{p,q}(z)=\frac{1}{2\pi i}\int_L\frac{\prod_{j=1}^m \Gamma(b_j+\beta_js)\prod_{i=1}^n\Gamma(1-a_i-\alpha_is)}{\prod_{i=n+1}^p\Gamma(a_j+\alpha_is)\prod_{j=m+1}^q \Gamma(1- b_j-\beta_js)}z^{-s} ds\,, \ \ z\in \mathbb{C}\,,
H^{m,n}_{p,q}(z)=\frac{1}{2\pi i}\int_L \mathcal{H}^{m,n}_{p,q}(s) z^{-s} \ud s\,, \ \ z\in \mathbb{C}\,,
\end{equation}
where an empty product in \eqref{2.1}  means  $1$  and
$L$ in \eqref{2.1} is the infinite contour which separates all the points  $b_{jl}$ to the left and all the points
 $a_{ik}$ to the right of $L$.  Moreover, $L$  has one of the following forms:
\begin{enumerate}[(1)]
\item $L=L_{-\infty}$ is a left loop situated in a horizontal strip starting at point $-\infty+i\phi_1$ and terminating at point $-\infty+i\phi_2$ for some   $-\infty<\phi_1< \phi_2<\infty$;
\item $L=L_{+\infty}$ is a right loop situated in a horizontal strip starting at point $+\infty+i\phi_1$ and terminating at point $\infty+i\phi_2$ for some  $-\infty<\phi_1< \phi_2<\infty$;
\item $L=L_{i\gamma\infty}$ is a contour starting at point $\gamma-i\infty$ and terminating at point $\gamma+i\infty$ for some  $\gamma\in(-\infty,  \infty)$.
\end{enumerate}
\end{definition}

\begin{figure}[htbp]
\centering
% \begin{tabular}{ccc}
\includegraphics[width=2in]{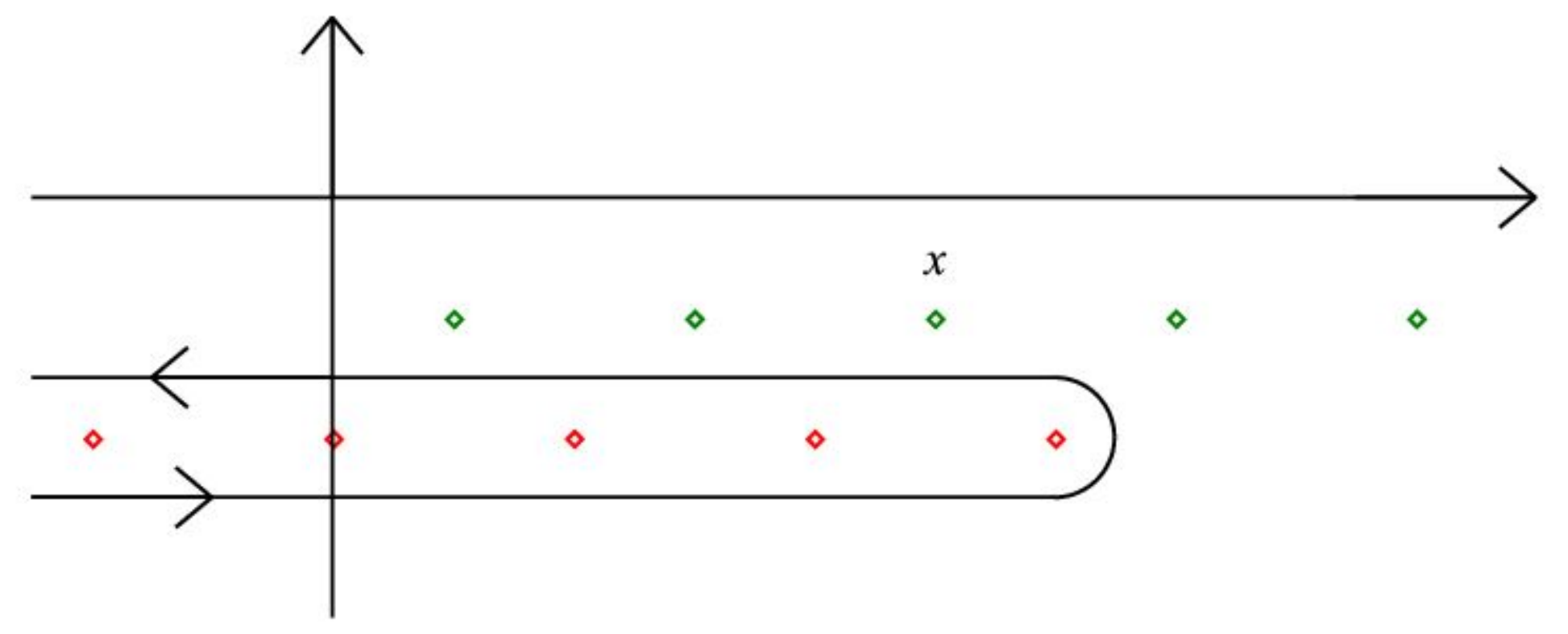}\quad
\includegraphics[width=2.in]{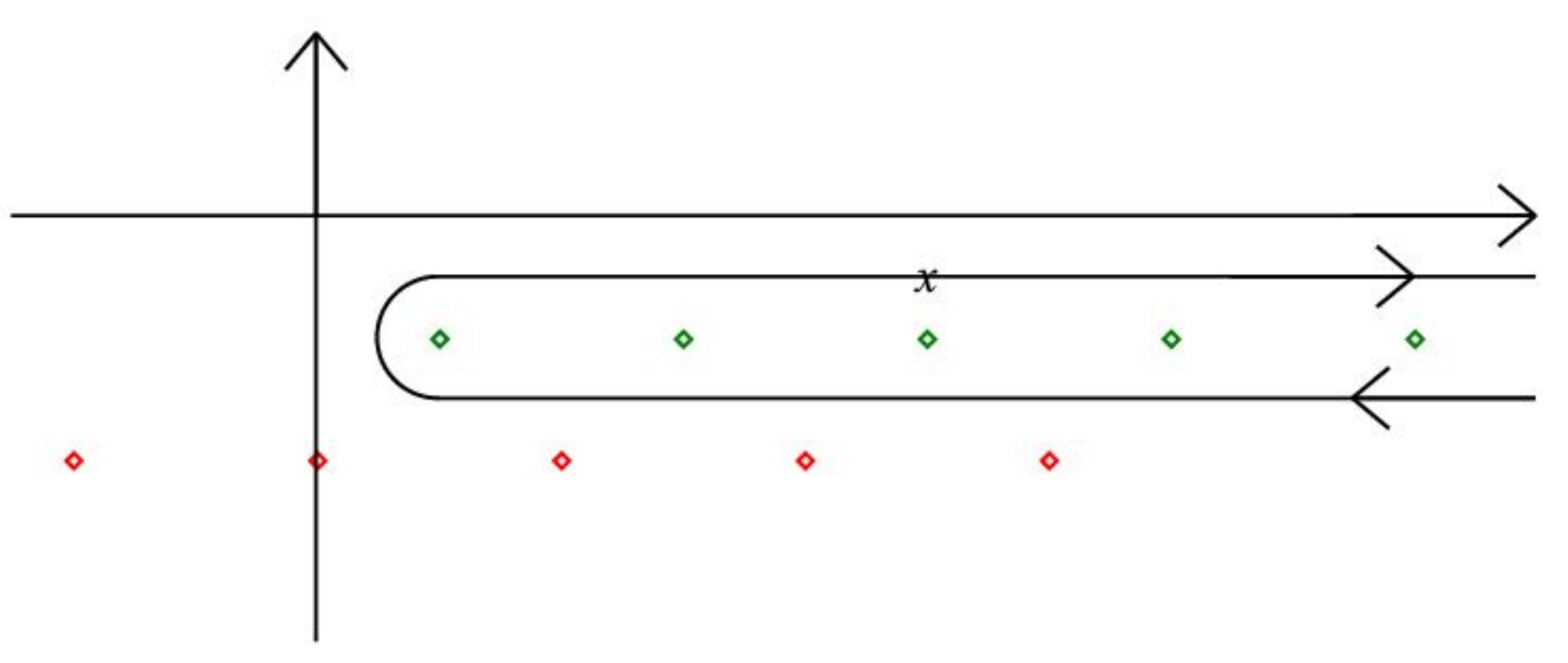}\quad
\includegraphics[width=2.in]{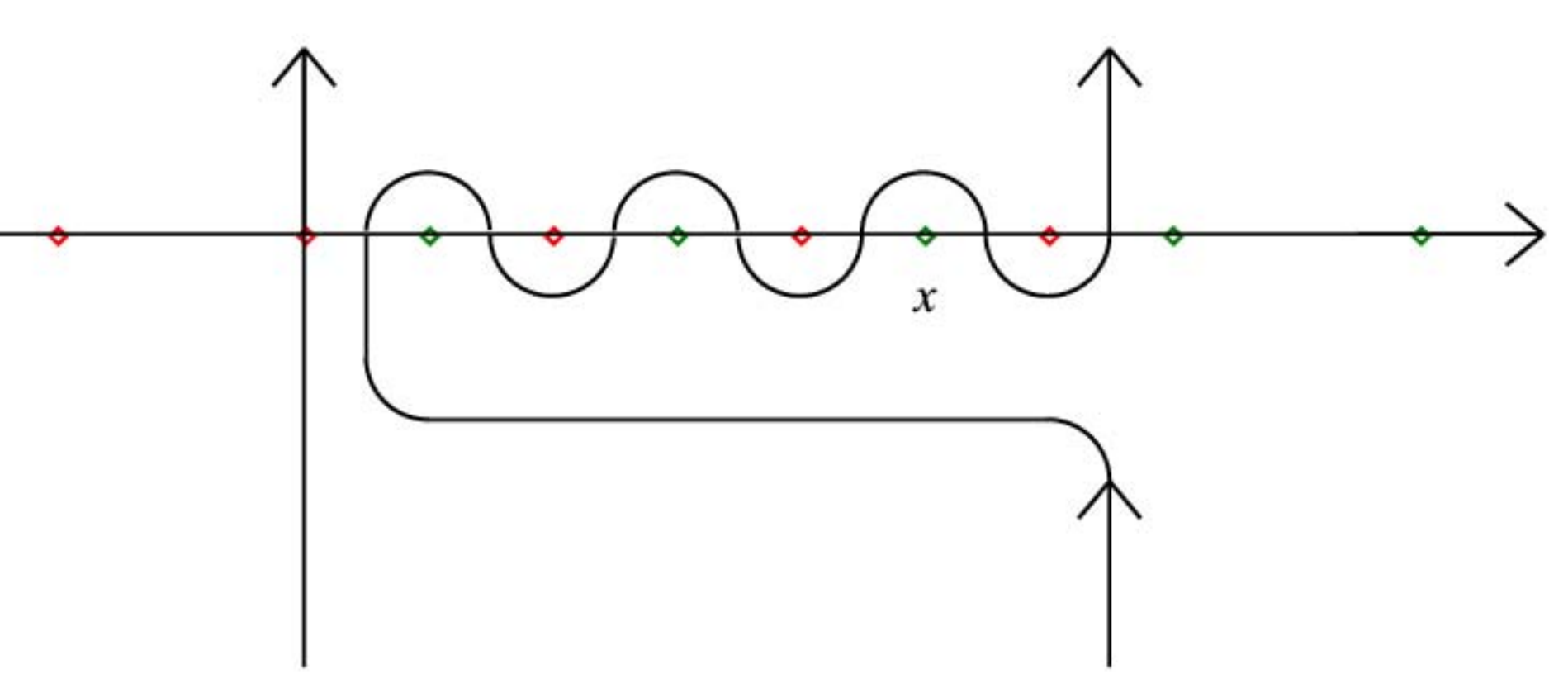}\\
% Case 1&Case 2&Case 3\\
% \end{tabular}
\caption{Some illustrations of the path $L$.}
\label{F:L}
\end{figure}
See Figure \ref{F:L} for some illustrating paths.
According to \cite[Theorem 1.1]{KilbasSaigo04H}, the integral \eqref{2.1} exists, for example,
when
\begin{align}\label{E:Delta}
\Delta:=\sum_{j=1}^q\beta_j-\sum_{i=1}^p\alpha_i\geq0 \quad\text{and}\quad L=L_{-\infty},
\end{align}
or when
\begin{align}\label{E:a*}
a^*:=\sum_{i=1}^n \alpha_i -\sum_{i=n+1}^p\alpha_i+\sum_{j=1}^m\beta_j-\sum_{j=m+1}^{q}\beta_j\geq 0
\quad\text{and}\quad L=L_{i\gamma \infty}.
\end{align}

We call the poles $a_{ik}$ (see \eqref{E:poles}) of $\Gamma(1-a_i+\alpha_i s)$, $i=1,\dots, n$, {\it simple}, if 
\begin{equation}\label{pole.con1}
\forall i,j,\quad \alpha_j(1-a_i+k)\neq\alpha_i(1-a_j+l),\qquad i \neq j; \quad i, j=1, 2,\dots, n; \quad k,l=0, 1,\dots.
\end{equation}
Similarly, the poles $b_{jl}$ (see \eqref{E:poles}) of $\Gamma(b_j+\beta_j s)$, $j=1,\dots, m$, {\it simple}, if
\begin{equation}\label{pole.con2}
\forall i,j,\quad \beta_j(b_i+k)\neq\beta_i(b_j+l),\qquad i \neq j; \quad  i, j=1, 2,\dots, m; \quad k,l=0, 1,\dots.
\end{equation}

\begin{theorem}[Part (iii) of Theorem 1.2 in \cite{KilbasSaigo04H}]\label{T:1.2(3)}
\label{conti.H}
Let $L=L_{i\gamma \infty},  a^*>0$,  $z \neq 0,$ then $H^{m,n}_{p,q}(z)$ is analytic on $\{z: |arg z|<a^*\pi/2\}$.
\end{theorem}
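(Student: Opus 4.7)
The plan is to establish absolute and locally uniform convergence of the Mellin--Barnes integral
\[
H^{m,n}_{p,q}(z)=\frac{1}{2\pi i}\int_{\gamma-i\infty}^{\gamma+i\infty}\mathcal{H}^{m,n}_{p,q}(s)\,z^{-s}\,ds
\]
on every compact subset of the sector $\Sigma_{a^*}:=\{z:|\arg z|<a^*\pi/2\}$, and then to deduce analyticity by a standard Morera/Fubini argument. The single substantive step is an asymptotic estimate of the integrand on the vertical line $\mathrm{Re}(s)=\gamma$.

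First I would apply the classical Stirling asymptotic
\[
|\Gamma(\sigma+i\tau)|=\sqrt{2\pi}\,|\tau|^{\sigma-1/2}\,e^{-\pi|\tau|/2}\bigl(1+O(1/|\tau|)\bigr),\qquad |\tau|\to\infty,
\]
uniformly for $\sigma$ in compact subsets of $\mathbb{R}$. Writing $s=\gamma+i\tau$ and noting that every Gamma factor in the numerator of $\mathcal{H}^{m,n}_{p,q}(s)$ contributes a decay factor $e^{-\pi|\tau|/2}$ (with the coefficient of $|\tau|$ in the argument being $\alpha_i$ or $\beta_j$), while every Gamma factor in the denominator contributes a growth factor $e^{+\pi|\tau|/2}$ (again weighted by the corresponding $\alpha_i$ or $\beta_j$), the exponential prefactors combine to
\[
\exp\!\Bigl(-\tfrac{\pi}{2}|\tau|\bigl[\textstyle\sum_{i=1}^{n}\alpha_i+\sum_{j=1}^{m}\beta_j-\sum_{i=n+1}^{p}\alpha_i-\sum_{j=m+1}^{q}\beta_j\bigr]\Bigr)=\exp\!\Bigl(-\tfrac{a^*\pi}{2}|\tau|\Bigr).
\]
The remaining polynomial factors assemble into $|\tau|^{R(\gamma)}$ for a real exponent $R(\gamma)$ depending only on the parameters and on $\gamma$. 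Hence there is a constant $C=C(\gamma)>0$ with
\[
\bigl|\mathcal{H}^{m,n}_{p,q}(\gamma+i\tau)\bigr|\le C\,|\tau|^{R(\gamma)}\,e^{-a^*\pi|\tau|/2}\qquad\text{for } |\tau|\ge 1.
\]

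Next I combine this with $|z^{-s}|=|z|^{-\gamma}e^{\tau\arg z}$ to bound the integrand by
\[
C\,|z|^{-\gamma}\,|\tau|^{R(\gamma)}\,\exp\!\bigl((\arg z)\tau-\tfrac{a^*\pi}{2}|\tau|\bigr)=C\,|z|^{-\gamma}\,|\tau|^{R(\gamma)}\,\exp\!\bigl(-(\tfrac{a^*\pi}{2}-|\arg z|)|\tau|\bigr)+\text{(sym.)}
\]
for $|\tau|\ge 1$. On any compact set $K\subset\Sigma_{a^*}$ one has a uniform lower bound $\tfrac{a^*\pi}{2}-|\arg z|\ge\delta>0$ for $z\in K$, so the last display decays exponentially in $|\tau|$ uniformly in $z\in K$. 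Therefore the integral defining $H^{m,n}_{p,q}(z)$ converges absolutely and uniformly on $K$.

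Finally, to promote uniform convergence to analyticity, I would apply Morera's theorem: for any closed triangle $T\subset\Sigma_{a^*}$, Fubini's theorem together with the uniform bound just obtained allows interchange of integrals,
\[
\oint_{\partial T}H^{m,n}_{p,q}(z)\,dz=\frac{1}{2\pi i}\int_{\gamma-i\infty}^{\gamma+i\infty}\mathcal{H}^{m,n}_{p,q}(s)\!\left(\oint_{\partial T}z^{-s}\,dz\right)ds=0,
\]
since $z\mapsto z^{-s}$ is holomorphic on $\Sigma_{a^*}\supset T$ for each fixed $s$ (any branch being consistent on the simply connected sector). This yields analyticity on $\Sigma_{a^*}$. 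The main obstacle is simply the bookkeeping of the Stirling exponents: one must verify that the signs attached to each $\alpha_i,\beta_j$ agree with those appearing in the definition of $a^*$, which is immediate from the fact that numerator Gammas (indices $1\le j\le m$ and $1\le i\le n$) contribute a $-\pi|\tau|/2$ term while denominator Gammas contribute $+\pi|\tau|/2$.
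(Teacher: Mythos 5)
Your proof is correct, and since the paper itself gives no argument for this statement (it is quoted verbatim from Theorem 1.2(iii) of Kilbas--Saigo), your Stirling-based estimate $|\mathcal{H}^{m,n}_{p,q}(\gamma+i\tau)|\le C|\tau|^{R(\gamma)}e^{-a^*\pi|\tau|/2}$, combined with $|z^{-s}|=|z|^{-\gamma}e^{\tau\arg z}$ and a Morera/Weierstrass argument, is exactly the standard proof of the cited result. The only cosmetic caveat is that $L_{i\gamma\infty}$ need not be the straight line $\mathrm{Re}(s)=\gamma$ (it may be indented to separate poles), but since it is asymptotically vertical your estimates for large $|\tau|$ apply unchanged.
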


\subsection{Some elementary properties}

\begin{property}[Property 2.2 of \cite{KilbasSaigo04H}]\label{Prop:Reduction}
If $n\ge 1$ and $q>m$, then
\[
\FoxH{m,n}{p,q}{z}{(a_i,\alpha_i)_{1,p}}{(b_j,\beta_j)_{1,q-1},\:(a_1,\alpha_1)}  =
\FoxH{m,n-1}{p-1,q-1}{z}{(a_i,\alpha_i)_{2,p}}{(b_j,\beta_j)_{1,q-1}} .
\]
And similarly, if $m\ge 1$ and $p>n$, then
\[
\FoxH{m,n}{p,q}{z}{(a_i,\alpha_i)_{1,p-1},\:(b_1,\beta_1)}{(b_j,\beta_j)_{1,q}}  =
\FoxH{m-1,n}{p-1,q-1}{z}{(a_i,\alpha_i)_{1,p-1}}{(b_j,\beta_j)_{2,q}} .
\]
\end{property}

\begin{property}[Property 2.3 of \cite{KilbasSaigo04H}]
\label{Prop:frac}
It holds that
\[
\FoxH{m,n}{p,q}{\frac{1}{z}}{(a_i,\alpha_i)_{1,p}}{(b_j,\beta_j)_{1,q}}
=
\FoxH{n,m}{q,p}{z}{(1-b_j,\beta_j)_{1,q}}{(1-a_i,\alpha_i)_{1,p}}.
\]
\end{property}

\begin{property}[Property 2.4 of \cite{KilbasSaigo04H}]
\label{Prop:Power}
For $k>0$, it holds that
\[
\FoxH{m,n}{p,q}{z}{(a_i,\alpha_i)_{1,p}}{(b_j,\beta_j)_{1,q}}  =
k \:
\FoxH{m,n}{p,q}{z^k}{(a_i,k\alpha_i)_{1,p}}{(b_j,k\beta_j)_{1,q}}.
\]
\end{property}

\subsection{Asymptotics at zero and infinity}

The following two theorems are some basic results on the asymptotic expansions of the Fox H-functions at zero and infinity.
% For more results we refer to Kilbas's book \cite{KilbasSaigo04H}.

\begin{theorem}\label{T:AsyInfty}
Suppose  $H^{m,n}_{p,q}(z)$ satisfies either $\Delta<0, a^*>0 $ or $\Delta \geq 0$.
If the poles of $\Gamma(1-a_i+\alpha_is)$ do not coincide, i.e., \eqref{pole.con1} holds,
then when $z\rightarrow \infty$, we have
\begin{equation} \label{0.infty}
 H^{m,n}_{p,q}(z)\sim\sum^n_{i=1}\sum^\infty_{k=0}h_{ik}z^{\frac{a_i-1-k}{\alpha_i}},
 \end{equation}
 where
\begin{equation}\label{infty.co}
h_{ik}=\frac{(-1)^k}{k!\alpha_i}\frac{\prod^m_{j=1}\Gamma\left(b_j+[1-a_i+k]\frac{\beta_j}{\alpha_i}\right) \prod^n_{j=1, i\neq j}\Gamma\left(1-a_j-[1-a_i+k]\frac{\alpha_j}{\alpha_i}\right) }{\prod^p_{j=n+1}\Gamma\left(a_j+[1-a_i+k]\frac{\alpha_j}{\alpha_i}\right) \prod^q_{j=m+1}\Gamma\left(1-b_j-[1-a_i+k]\frac{\beta_j}{\alpha_i}\right) }.
\end{equation}
\end{theorem}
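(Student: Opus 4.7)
}

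The plan is to start from the Mellin--Barnes representation
\[
H^{m,n}_{p,q}(z) \;=\; \frac{1}{2\pi i}\int_L \mathcal{H}^{m,n}_{p,q}(s)\, z^{-s}\, ds,
\]
and deform $L$ to the right to capture, via Cauchy's residue theorem, the poles of the factor $\prod_{i=1}^n \Gamma(1-a_i-\alpha_i s)$. Under the hypothesis \eqref{pole.con1}, these poles are simple and are precisely the points $s_{ik}=(1-a_i+k)/\alpha_i$ for $1\le i\le n$, $k\ge 0$. Since these lie to the right of $L$, for each $R>0$ large enough we would replace $L$ by a rectangular contour whose right vertical side lies at $\Re s = R$, writing
\[
\frac{1}{2\pi i}\int_L \mathcal{H}^{m,n}_{p,q}(s)\,z^{-s}\,ds \;=\; \sum_{s_{ik}:\,\Re s_{ik}<R}\!\!\res_{s=s_{ik}}\!\!\bigl[\mathcal{H}^{m,n}_{p,q}(s)\,z^{-s}\bigr] \;+\; \frac{1}{2\pi i}\int_{L_R}\mathcal{H}^{m,n}_{p,q}(s)\,z^{-s}\,ds,
\]
where $L_R$ is the shifted vertical line $\Re s = R$.

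The residue calculation is the routine step. At the simple pole $s=s_{ik}$, one has $\res_{s=s_{ik}}\Gamma(1-a_i-\alpha_i s) = -(-1)^k/(k!\,\alpha_i)$, and all other gamma factors appearing in $\mathcal{H}^{m,n}_{p,q}$ are evaluated at $s=s_{ik}$; the overall sign works out together with $z^{-s_{ik}} = z^{(a_i-1-k)/\alpha_i}$ to produce exactly the coefficient $h_{ik}$ displayed in \eqref{infty.co}. Rearranging the double sum over $(i,k)$ yields the asymptotic series in \eqref{0.infty}.

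The main obstacle, and the step requiring the full force of the hypotheses $\Delta<0$ with $a^*>0$, or $\Delta\ge 0$, is controlling the remainder $\frac{1}{2\pi i}\int_{L_R}\mathcal{H}^{m,n}_{p,q}(s)\,z^{-s}\,ds$ and showing that, as $|z|\to\infty$ with $R$ chosen to grow suitably, this contributes only to the error term implicit in the $\sim$ notation. This requires Stirling's asymptotic expansion $\log\Gamma(s) = (s-\tfrac12)\log s - s + O(1)$ to estimate $|\mathcal{H}^{m,n}_{p,q}(\sigma+i\tau)|$ on vertical lines and on horizontal returns; the coefficient $\Delta = \sum_j\beta_j - \sum_i\alpha_i$ governs the power of $|s|$ in the estimate, and $a^*$ governs the exponential factor $\exp(-a^*\pi|\tau|/2)$. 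Under the stated conditions one obtains a bound on $|\mathcal{H}^{m,n}_{p,q}(\sigma+i\tau)\,z^{-\sigma-i\tau}|$ that, after integrating, is of smaller order than $z^{(a_i-1-N)/\alpha_i}$ for any finite truncation at $k=N$, which is precisely the content of the asymptotic statement \eqref{0.infty}. One should finally verify that the horizontal segments closing the rectangle contribute negligibly, by the same Stirling bound combined with the horizontal-strip condition governing $L$.
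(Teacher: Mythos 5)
The paper offers no proof of this theorem: it simply points to Section 1.5 of Kilbas and Saigo, where \eqref{0.infty} appears as asymptotic expansion (1.5.1), so your sketch is in effect reconstructing the proof of the cited source, and it follows exactly the standard Mellin--Barnes route used there. The residue bookkeeping is right: under \eqref{pole.con1} the poles $s_{ik}=(1-a_i+k)/\alpha_i$ of $\prod_{i=1}^{n}\Gamma(1-a_i-\alpha_i s)$ are simple, and $\res_{w=-k}\Gamma(w)=(-1)^k/k!$ together with the factor $-1/\alpha_i$ from the substitution $w=1-a_i-\alpha_i s$ and the clockwise orientation of the closing rectangle produces exactly $h_{ik}z^{(a_i-1-k)/\alpha_i}$ (your displayed identity writes $+\sum\res$, so the orientation sign is silently absorbed there --- that is the only place your ``the sign works out'' remark is doing real work). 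The one substantive step you assert rather than prove is the remainder bound on $\Re s=R$, which is indeed where the hypotheses enter: Stirling gives $|\mathcal{H}^{m,n}_{p,q}(\sigma+i\tau)|=O(|\tau|^{\Delta\sigma+\Re\mu}e^{-a^{*}\pi|\tau|/2})$ as $|\tau|\to\infty$, so $a^{*}>0$ (or, for $\Delta\ge 0$, the contour $L_{-\infty}$ with the rectangle adapted to a left loop) is what makes the shifted integral converge and be $O(z^{-R})$; and since $\mathcal{H}(s)$ grows super-exponentially as $\Re s\to+\infty$ when $\Delta>0$, the series is only asymptotic, so your framing in terms of a fixed finite truncation at $k=N$ is the correct one.
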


\begin{proof}
See section 1.5 of  \cite{KilbasSaigo04H} . \eqref{0.infty} is asymptotic expansion 1.5.1 of \cite{KilbasSaigo04H}.
\end{proof}

\begin{theorem}\label{T:AsypZero}
Suppose  $H^{m,n}_{p,q}(z)$ satisfies either $\Delta<0$, $a^*>0$ or $\Delta \geq 0$. When $z\rightarrow0$, we have
the following two cases:
\begin{enumerate}[(1)]
\item If the poles $b_{jl}$ are simple (see \eqref{pole.con2}), then 
\begin{equation}\label{E:HZeroSimple}
H^{m,n}_{p,q}(z)\sim\sum^m_{j=1}\sum^\infty_{l=0}h^*_{jl}z^{\frac{b_j+l}{\beta_j}};
\end{equation}
\item If the poles $b_{jl}$ (see \eqref{E:poles}) of $\Gamma(b_j+\beta_js)$, $j=1,\dots, m$, coincide:
\[
\exists i,j, \qquad  \beta_j(b_i+k)=\beta_i(b_j+l),\qquad i \neq j; \quad i, j=1, 2,\dots, m; \quad k,l=0, 1, \dots,
\]
then 
\begin{equation}\label{E:HZeroLog}
H^{m,n}_{p,q}(z)\sim \sum\nolimits'_{j, l} h^*_{jl}z^{\frac{b_j+l}{\beta_j}}+\sum\nolimits''_{j, l}\sum^{N^*_{jl}-1}_{i=0}H^*_{jli}z^{\frac{b_j+l}{\beta_j}}[\log z]^i,
\end{equation}
\end{enumerate}
Here
\begin{equation}\label{E:h*jl}
h^*_{jl}=\frac{(-1)^l}{l!\beta_j}\frac{\prod^m_{i=1,i\neq j}\Gamma\left(b_i-[b_j+l]\frac{\beta_i}{\beta_j}\right) \prod^n_{i=1}\Gamma\left(1-a_i+[b_j+l]\frac{\alpha_i}{\beta_j}\right) }{\prod^p_{i=n+1}\Gamma\left(a_i-[b_j+l]\frac{\alpha_i}{\beta_j}\right) \prod^q_{i=m+1}\Gamma\left(1-b_i+[b_j+l]\frac{\beta_i}{\beta_j}\right) };
\end{equation}
$\sum_{j, l}  ' $ is summation over $j, l$ such that the $b_{jl}$
do not coincide; $\sum_{j, l} '' $
is the summation over $j, l$ such that $b_{jl}$ coincide with order $N^*_{jl}$;
\begin{equation}\label{com.co}
H^*_{jli}=\frac{1}{(N^*_{jl}-1)!}\sum_{n=i}^{N^*_{jl}-1}(-1)^i{N^*_{jl}-1 \choose n}{n \choose i}[\mathcal{H}^*_1(b_{jl})]^{(N^*_{jl}-1-n)}[\mathcal{H}^*_2(b_{jl})]^{(n-i)},
\end{equation}
where $N^*_{jl}-1-n$ and $n-i$ are orders of derivative;
\begin{equation}
\mathcal{H}^*_1(s)=(s-b)^{N^*}\prod_{j=j_1}^{j_{N^*}}\Gamma(b_j+\beta_js), \qquad \mathcal{H}^*_2(s)=(s-b)^{N^*}\prod_{j=j_1}^{j_{N^*}}\Gamma(b_j+\beta_js)\mathcal{H}^{m,n}_{p,q}(s),
\end{equation}
where $b$ is the pole with order $N^*.$
\end{theorem}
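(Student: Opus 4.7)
The strategy is the classical Mellin--Barnes residue calculus. Starting from the defining integral
\[
H^{m,n}_{p,q}(z) = \frac{1}{2\pi i}\int_L \mathcal{H}^{m,n}_{p,q}(s)\, z^{-s}\,\ud s,
\]
the plan is to deform the contour $L$ to the left across all poles $b_{jl}=-(b_j+l)/\beta_j$ of $\prod_{j=1}^{m}\Gamma(b_j+\beta_j s)$, which are the only singularities of $\mathcal{H}^{m,n}_{p,q}(s)$ lying to the left of $L$. Under either hypothesis ($\Delta\ge 0$, or $\Delta<0$ together with $a^*>0$), Stirling's asymptotic for ratios of gamma functions provides the decay of $|\mathcal{H}^{m,n}_{p,q}(s)|$ along vertical segments $\mathrm{Re}(s)=-R$ as $R\to\infty$ needed to push the contour to $-\infty$ and to obtain an asymptotic series valid as $z\to 0$. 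By the residue theorem one then gets
\[
H^{m,n}_{p,q}(z) \sim \sum_{j=1}^{m}\sum_{l=0}^{\infty}\mathop{\res}_{s=b_{jl}}\bigl[\mathcal{H}^{m,n}_{p,q}(s)\,z^{-s}\bigr],
\]
and collecting residues in order of increasing $\mathrm{Re}(b_{jl})$ yields the stated expansions.

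For Case (1), the simplicity condition \eqref{pole.con2} forces each $b_{jl}$ to be a simple pole of $\mathcal{H}^{m,n}_{p,q}(s)$, arising solely from the single factor $\Gamma(b_j+\beta_j s)$. Using $\mathop{\res}_{s=b_{jl}}\Gamma(b_j+\beta_j s)=(-1)^l/(l!\beta_j)$ and substituting $s=-(b_j+l)/\beta_j$ into the remaining $\Gamma$-quotient (which is analytic and nonzero there) directly produces the coefficient $h^*_{jl}$ of \eqref{E:h*jl} multiplied by $z^{-s}=z^{(b_j+l)/\beta_j}$, giving \eqref{E:HZeroSimple}.

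For Case (2), when several $b_{jl}$ coincide at some point $s=b$ the pole has order $N^*=N^*_{jl}$, and with the factorization $\mathcal{H}^{m,n}_{p,q}(s)=\mathcal{H}^*_2(s)/\mathcal{H}^*_1(s)$ from the statement one has $\mathcal{H}^*_1,\mathcal{H}^*_2$ holomorphic and nonvanishing near $s=b$. The higher-order residue is
\[
\mathop{\res}_{s=b}\bigl[\mathcal{H}^{m,n}_{p,q}(s)\,z^{-s}\bigr] = \frac{1}{(N^*-1)!}\left.\frac{d^{N^*-1}}{ds^{N^*-1}}\!\left[\frac{\mathcal{H}^*_2(s)}{\mathcal{H}^*_1(s)}\,(s-b)^{N^*}\,z^{-s}\right]\right|_{s=b}.
\]
Writing $z^{-s}=z^{-b}e^{-(s-b)\log z}$, so that $(d/ds)^iz^{-s}=(-\log z)^i z^{-s}$, and applying Leibniz's rule twice (first to split off $z^{-s}$, then to split $\mathcal{H}^*_1^{-1}\mathcal{H}^*_2$ into its two analytic factors whose Taylor derivatives at $s=b$ appear in the statement), one regroups by powers $[\log z]^i$ for $i=0,\ldots,N^*-1$. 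This reproduces exactly the double binomial coefficients $\binom{N^*-1}{n}\binom{n}{i}$ with derivatives $[\mathcal{H}^*_1(b)]^{(N^*-1-n)}$ and $[\mathcal{H}^*_2(b)]^{(n-i)}$ appearing in formula \eqref{com.co}, yielding \eqref{E:HZeroLog}.

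The main obstacle is the justification of the contour shift: one must bound the tail integrals on vertical segments $\mathrm{Re}(s)=-R$ uniformly in $R$ and produce a remainder that is $O(z^M)$ for arbitrarily large $M$ in the asymptotic sense, which requires a careful Stirling estimate showing $|\mathcal{H}^{m,n}_{p,q}(\sigma+i\tau)|\to 0$ fast enough on these vertical lines; the conditions on $\Delta$ and $a^*$ are designed precisely for this. The combinatorial identification of the Leibniz coefficients in Case (2) is routine once $\mathcal{H}^*_1,\mathcal{H}^*_2$ are properly identified, but care is needed to match sign conventions and the index ranges in \eqref{com.co}.
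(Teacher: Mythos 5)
The paper does not actually prove this theorem: its ``proof'' is a citation to Section 1.8 of \cite{KilbasSaigo04H}, where \eqref{E:HZeroSimple} and \eqref{E:HZeroLog} appear as expansions 1.8.1 and 1.8.2. Your Mellin--Barnes residue argument is precisely the standard derivation behind that citation, so in substance you are reconstructing the reference's proof rather than taking a different route. The skeleton is correct: shift $L$ to the left past the poles $b_{jl}$ of $\prod_{j\le m}\Gamma(b_j+\beta_j s)$; at a simple pole the identity $\res_{s=b_{jl}}\Gamma(b_j+\beta_j s)=(-1)^l/(l!\beta_j)$ together with evaluation of the remaining gamma quotient at $s=-(b_j+l)/\beta_j$ gives exactly $h^*_{jl}z^{(b_j+l)/\beta_j}$; at a pole of order $N^*$ the order-$(N^*-1)$ derivative in the residue formula, applied to $z^{-s}=z^{-b}e^{-(s-b)\log z}$, produces the powers $[\log z]^i$, $0\le i\le N^*-1$.

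Two points deserve attention. First, your stated factorization $\mathcal{H}^{m,n}_{p,q}=\mathcal{H}^*_2/\mathcal{H}^*_1$ is inconsistent with the Leibniz computation you then perform: to land on the coefficients \eqref{com.co}, with derivatives $[\mathcal{H}^*_1(b_{jl})]^{(N^*-1-n)}$ and $[\mathcal{H}^*_2(b_{jl})]^{(n-i)}$ of two functions \emph{analytic} at the pole, you need the product relation $(s-b)^{N^*}\mathcal{H}^{m,n}_{p,q}(s)=\mathcal{H}^*_1(s)\mathcal{H}^*_2(s)$, where $\mathcal{H}^*_1$ is $(s-b)^{N^*}$ times the colliding gamma factors and $\mathcal{H}^*_2$ is $\mathcal{H}^{m,n}_{p,q}$ with those factors removed. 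This is exactly how the paper uses $\mathcal{H}^*_1,\mathcal{H}^*_2$ in the proof of Lemma \ref{est.H} (the displayed definition of $\mathcal{H}^*_2$ in the theorem statement is itself garbled, so the confusion is understandable, but your quotient convention would not reproduce \eqref{com.co}). Second, the contour-shift justification that you yourself flag as the main obstacle is the only genuinely analytic content of the theorem: one must verify, via Stirling, that under $\Delta\ge 0$, or $\Delta<0$ with $a^*>0$, the integrand decays on the displaced contours and the remainder after truncating at $\Re(s)=-R$ is $o$ of the last retained power of $z$. As written you assert this rather than prove it; that estimate is the content of \cite[Sections 1.2 and 1.8]{KilbasSaigo04H}, so your argument remains an outline at the same level of rigor as the paper's citation.
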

\begin{proof}
See section 1.8 of  \cite{KilbasSaigo04H} . \eqref{E:HZeroSimple} is asymptotic expansion 1.8.1 of \cite{KilbasSaigo04H}.
\eqref{E:HZeroLog} is asymptotic expansion 1.8.2  of \cite{KilbasSaigo04H}.
\end{proof}
%For a detailed account of the Fox H-function, we refer to \cite{KilbasSaigo04H}.

\subsection{A convolution property}

The following theorem is a simplified version of Theorems 2.9 and 2.10 in \cite{KilbasEtc06}, which is
sufficient for our use. We need another parameter for the Fox H-function \eqref{E:FoxH}:
\begin{align}\label{E:mu}
\mu=\sum_{j=1}^q b_j -\sum_{i=1}^p a_i + \frac{p-q}{2}.
\end{align}
%
% \begin{theorem}\label{T:HConvH}
% Let $a_1^*$ and $a_2^*$ be the constant $a^*$ in \eqref{E:a*} for the following two Fox H-functions:
% \[
%  \FoxH{m,n}{p,q}{x}{(a_i,\alpha_i)_{1,p}}{(b_j,\beta_j)_{1,q}}
%  \quad\text{and}\quad
%  \FoxH{M,N}{P,Q}{x}{(e_i,\theta_i)_{1,P}}{(\epsilon_j,\phi_j)_{1,Q}}.
% \]
% Denote
% \begin{align*}
% &A_1 = \min_{1\le i\le n} \frac{1-\Re(a_i)}{\alpha_i},\quad
% B_1 = \min_{1\le j\le m} \frac{\Re(b_j)}{\beta_j},\\
% &A_2 = \min_{1\le j\le M} \frac{1-\Re(\epsilon_j)}{\phi_i},\quad
% B_2 = \min_{1\le i\le N} \frac{\Re(e_i)}{\theta_i},
% \end{align*}
% with the convention that $\min(\phi)=+\infty$.
% Under the following conditions
% \[
% a_1^*>0,\quad a_2^*>0,\quad A_1+B_1>0,\quad A_2+B_2>0,\quad A_1+A_2>0,\quad B_1+B_2>0,
% \]
% we have that for all $z>0$, $x\in\R$,
% \begin{align*}
% &\FoxH{m+M,n+N}{p+P,q+Q}{z x}{(a_i,\alpha_i)_{1,n},\: (e_i,\theta_i)_{1,P},\: (a_i,\alpha_i)_{n+1,p}}
%  {(b_j,\beta_j)_{1,m},\: (\epsilon_j,\phi_j)_{1,Q},\: (b_j,\beta_j)_{m+1,q}}
%  \\
% &\hspace{3em} =\int_0^\infty
%  \FoxH{m,n}{p,q}{z t}{(a_i,\alpha_i)_{1,p}}{(b_j,\beta_j)_{1,q}}
%  \FoxH{M,N}{P,Q}{\frac{x}{t}}{(e_j,\theta_j)_{1,P}}{(\epsilon_j,\phi_j)_{1,Q}}\frac{\ud t}{t}.
% \end{align*}
% \end{theorem}
% \begin{proof}
% By Property 2.3 of \cite{KilbasEtc06},
% \[
%  \FoxH{M,N}{P,Q}{\frac{x}{t}}{(e_i,\theta_i)_{1,P}}{(\epsilon_j,\phi_j)_{1,Q}}
%  =
%  \FoxH{N,M}{Q,P}{\frac{t}{x}}{(1-\epsilon_j,\phi_j)_{1,Q}}{(1-e_i,\theta_i)_{1,P}}.
% \]
% Then apply Theorem 2.9 of \cite{KilbasEtc06} with $\eta=0$, $\sigma=1$, and $w=1/x$.
% \end{proof}

\begin{theorem}\label{T:HConvH}
Let $(a_1^*,\Delta_1,\mu_1)$ and $(a_2^*,\Delta_2,\mu_2)$ be the constants $(a^*,\Delta,\mu)$ defined in
\eqref{E:a*}, \eqref{E:Delta} and \eqref{E:mu} for the following two Fox H-functions:
\[
 \FoxH{m,n}{p,q}{x}{(a_i,\alpha_i)_{1,p}}{(b_j,\beta_j)_{1,q}}
 \quad\text{and}\quad
 \FoxH{M,N}{P,Q}{x}{(d_i,\delta_i)_{1,P}}{(c_j,\gamma_j)_{1,Q}},
\]
respectively. Denote
\begin{align*}
&A_1 = \min_{1\le i\le n} \frac{1-\Re(a_i)}{\alpha_i},\quad
B_1 = \min_{1\le j\le m} \frac{\Re(b_j)}{\beta_j},\\
&A_2 = \min_{1\le j\le M} \frac{\Re(c_j)}{\gamma_j},\quad
B_2 = \min_{1\le i\le N} \frac{1-\Re(d_i)}{\delta_i},
\end{align*}
with the convention that $\min(\phi)=+\infty$.
If either of the following four conditions holds
% \begin{align}
\begin{enumerate}[(1)]
 \item $a_1^*> 0$ and $ a_2^*\ge 0$ and $\Delta_2\ne 0$;
 \item $a_1^*\ge 0$ and $ a_2^*> 0$ and $\Delta_1\ne 0$;
 \item $a_1^*=\Delta_1=0$, $\Re(\mu_1)<-1$ and $a_2^*>0$;
 \item $a_2^*=\Delta_2=0$, $\Re(\mu_2)<-1$ and $a_1^*>0$;
 \item $a_1^*=\Delta_1=0$, $\Re(\mu_1)<-1$ and $a_2^*=\Delta_2=0$, $\Re(\mu_2)<-1$,
\end{enumerate}
and if
\begin{align}\label{E:HConvH}
A_1+B_1>0,\quad A_2+B_2>0,\quad A_1+A_2>0,\quad B_1+B_2>0,
\end{align}
then, for all $z>0$, $x\in\R$,
\begin{align*}
&\FoxH{m+M,n+N}{p+P,q+Q}{z x}{(a_i,\alpha_i)_{1,n},\: (d_i,\delta_i)_{1,P},\: (a_i,\alpha_i)_{n+1,p}}
 {(b_j,\beta_j)_{1,m},\: (c_j,\gamma_j)_{1,Q},\: (b_j,\beta_j)_{m+1,q}}
 \\
&\hspace{4em} =\int_0^\infty
 \FoxH{m,n}{p,q}{z t}{(a_i,\alpha_i)_{1,p}}{(b_j,\beta_j)_{1,q}}
 \FoxH{M,N}{P,Q}{\frac{x}{t}}{(d_i,\delta_i)_{1,P}}{(c_j,\gamma_j)_{1,Q}}\frac{\ud t}{t}.
\end{align*}
\end{theorem}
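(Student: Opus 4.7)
The plan is to prove the identity by the Mellin transform / Mellin convolution method: the Fox H-function is by definition the inverse Mellin transform of its gamma-function kernel $\mathcal{H}^{m,n}_{p,q}(s)$, and a multiplicative convolution in $t$ corresponds, under the Mellin transform, to a product of the Mellin transforms. The whole proof therefore reduces to one algebraic factorization and one Fubini-plus-Mellin-inversion argument.

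The algebraic step is to recognise that, by the way the parameter lists are spliced in the statement of the theorem, the gamma-function kernel of the big H-function on the left factors as
\[
\mathcal{H}^{m+M,n+N}_{p+P,q+Q}(s)\;=\;\mathcal{H}^{m,n}_{p,q}(s)\,\mathcal{H}^{M,N}_{P,Q}(s).
\]
Indeed the first $m+M$ lower parameters on the left are $(b_j,\beta_j)_{1,m}$ followed by $(c_j,\gamma_j)_{1,M}$, producing numerator factors $\prod_{j=1}^{m}\Gamma(b_j+\beta_j s)\prod_{j=1}^{M}\Gamma(c_j+\gamma_j s)$; the remaining lower parameters yield the matching denominator factors, and identical bookkeeping works for the upper parameters and the $n+N$ slot. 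Consequently the left-hand side has the Mellin--Barnes representation
\[
\text{LHS}(zx)\;=\;\frac{1}{2\pi i}\int_L \mathcal{H}^{m,n}_{p,q}(s)\,\mathcal{H}^{M,N}_{P,Q}(s)\,(zx)^{-s}\,ds.
\]

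For the right-hand side I would substitute the Mellin--Barnes integral for one factor, say $H^{m,n}_{p,q}(zt)$, write $(zt)^{-s}=z^{-s}t^{-s}$, and interchange the order of $s$- and $t$-integration by Fubini. The inner $t$-integral becomes, after the substitution $u=x/t$,
\[
\int_0^\infty t^{-s-1}\,H^{M,N}_{P,Q}(x/t)\,dt\;=\;x^{-s}\int_0^\infty u^{s-1}\,H^{M,N}_{P,Q}(u)\,du\;=\;x^{-s}\,\mathcal{H}^{M,N}_{P,Q}(s),
\]
where the last equality is the elementary Mellin transform of the Fox H-function, obtained by deforming the Mellin--Barnes contour of $H^{M,N}_{P,Q}$ onto the line $\Re(s)$ and picking up the defining integrand. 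Multiplying by $\mathcal{H}^{m,n}_{p,q}(s)z^{-s}$ and integrating along $L$ reproduces exactly the Mellin--Barnes integral displayed above for the LHS, proving the identity.

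The main obstacle is the justification of Fubini and of the simultaneous choice of a single contour $L$, which is precisely what the five alternative conditions (1)--(5) together with the inequalities \eqref{E:HConvH} are designed to achieve. Using the asymptotics from Theorem~\ref{T:AsyInfty} and Theorem~\ref{T:AsypZero} (possibly with logarithmic corrections which are harmless), the integrand $H_1(zt)H_2(x/t)/t$ behaves like $t^{B_1+B_2-1}$ as $t\to 0$ and like $t^{-A_1-A_2-1}$ as $t\to\infty$, so the assumptions $B_1+B_2>0$ and $A_1+A_2>0$ give absolute integrability in $t$. The remaining inequalities $A_1+B_1>0$ and $A_2+B_2>0$ ensure that each individual Mellin transform of $H^{m,n}_{p,q}$ and $H^{M,N}_{P,Q}$ is defined on a nonempty common vertical strip, into which the contour $L=L_{i\gamma\infty}$ can be placed so as to separate the $\Gamma(b_j+\beta_j s)$ and $\Gamma(c_j+\gamma_j s)$ poles from the $\Gamma(1-a_i-\alpha_i s)$ and $\Gamma(1-d_i-\delta_i s)$ poles. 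Finally, each of the five conditions on $(a_i^*,\Delta_i,\mu_i)$ is one of the standard sufficient conditions (cf.\ \cite{KilbasSaigo04H,KilbasEtc06}) guaranteeing absolute convergence of the corresponding Mellin--Barnes integrals along $L$, so that the interchange of integration and the Mellin inversion step are both legitimate, completing the proof.
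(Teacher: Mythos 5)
Your proposal is correct in outline but takes a genuinely different route from the paper. The paper's proof is essentially a two-step reduction to the literature: it first rewrites the second factor via Property~\ref{Prop:frac} as $H^{N,M}_{Q,P}(t/x)$ with parameters $(1-c_j,\gamma_j)$ and $(1-d_i,\delta_i)$, and then invokes Theorems 2.9 and 2.10 of \cite{KilbasEtc06} (with $\eta=0$, $\sigma=1$, $w=1/x$ and the obvious relabelling), distributing the five hypotheses (1)--(5) among the cases covered by those two cited theorems. You instead prove the identity from first principles by the Mellin-convolution method: the factorization $\mathcal{H}^{m+M,n+N}_{p+P,q+Q}(s)=\mathcal{H}^{m,n}_{p,q}(s)\,\mathcal{H}^{M,N}_{P,Q}(s)$, which you verify correctly from the way the parameter lists are spliced, plus the Mellin transform formula $\int_0^\infty u^{s-1}H^{M,N}_{P,Q}(u)\,\ud u=\mathcal{H}^{M,N}_{P,Q}(s)$ and Fubini. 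This is in effect the proof of the cited theorems themselves, so it is more self-contained and, importantly, it explains \emph{why} the four inequalities in \eqref{E:HConvH} are exactly what is needed: your accounting of $B_1+B_2>0$ and $A_1+A_2>0$ as integrability of $H_1(zt)H_2(x/t)/t$ at $t=0$ and $t=\infty$ (via Theorems~\ref{T:AsypZero} and~\ref{T:AsyInfty}), and of $A_1+B_1>0$, $A_2+B_2>0$ as nonemptiness of the individual Mellin strips, is accurate, and together the four inequalities give a common vertical contour that also separates the poles of the combined kernel. The price you pay is that the delicate absolute-convergence issues in the boundary cases --- in particular when one of $a_1^*$, $a_2^*$ vanishes, where the vertical-contour Mellin--Barnes integral is not absolutely convergent and the conditions $\Delta_i\ne 0$ or $\Re(\mu_i)<-1$ must be brought in to justify the interchange --- are only gestured at and deferred to \cite{KilbasSaigo04H,KilbasEtc06}; but since the paper's own proof defers the entire argument to those references, your sketch is at least as informative, and I see no step that would fail.
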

\begin{proof}
By Property \ref{Prop:frac},
\begin{align}\label{E_:HH}
 \FoxH{M,N}{P,Q}{\frac{x}{t}}{(d_i,\delta_i)_{1,P}}{(c_j,\gamma_j)_{1,Q}}
 =
 \FoxH{N,M}{Q,P}{\frac{t}{x}}{(1-c_j,\gamma_j)_{1,Q}}{(1-d_i,\delta_i)_{1,P}}.
\end{align}
If both $a_1^*>0$ and $a_2^*>0$, then one can apply Theorem 2.9 of \cite{KilbasEtc06} with $\eta=0$, $\sigma=1$, $w=1/x$, and with the following replacements:
$N\rightarrow M$, $M\rightarrow N$,
$P\rightarrow Q$, $Q\rightarrow P$,
$c_j\rightarrow 1-c_j$, $d_i\rightarrow 1-d_i$.
When $a_1^*>0$ but $a_2^*=0$ and $\Delta_2\ne 0$, because both $z$ and $x$ are real numbers,
Theorem 2.9 of \cite{KilbasSaigo04H} still holds (see the paragraph before Theorem 2.10 of \cite{KilbasSaigo04H} on p.59).
Therefore, we have proved the theorem under conditions (1) or (2).

If either of conditions (3)--(5) holds, we apply Theorem 2.10 of \cite{KilbasEtc06}
in the same way.
Note that the parameters $\mu$ for both Fox H-functions in \eqref{E_:HH} are equal.
\end{proof}

\subsection{Some integral transforms}
The following result on the Laplace transform of the Fox H-function is used in this paper.
Denote
\[
(\bL f)(t):=\int_0^\infty f(x)e^{-t x}\ud x, \quad t\in\bC.
\]

\begin{theorem}[Part of Corollary 2.3.1 of \cite{KilbasSaigo04H}]\label{T:Laplace}
% (?? still need other case.)
Suppose that $a^*>0$. Assume that $w\in\mathbb{C}$, $a>0$ and $\sigma>0$ are such that
\begin{align}\label{E:sigmabBetaw}
\sigma \min_{1\le j\le m}\frac{\Re(b_j)}{\beta_j} +\Re(w)>-1.
\end{align}
Then
\[
\left[\bL x^w\FoxH{m,n}{p,q}{a x^\sigma}{(a_i,\alpha_i)_{1,p}}{(b_j,\beta_j)_{1,q}}\right](t)
=
\frac{1}{t^{w+1}}
\FoxH{m,n+1}{p+1,q}{\frac{a}{t^\sigma}}{(-w,\sigma),\:(a_i,\alpha_i)_{1,p}}{(b_j,\beta_j)_{1,q}}.
\]
\end{theorem}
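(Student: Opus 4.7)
The plan is to start from the Mellin--Barnes representation of the Fox H-function and swap the order of integration with the Laplace integral, so that the inner integral becomes a Gamma function that can be absorbed into the definition of a new Fox H-function. Concretely, by Definition \ref{D:H},
\[
x^{w}\FoxH{m,n}{p,q}{ax^{\sigma}}{(a_i,\alpha_i)_{1,p}}{(b_j,\beta_j)_{1,q}}
=\frac{1}{2\pi i}\int_{L}\mathcal{H}^{m,n}_{p,q}(s)\,a^{-s}\,x^{w-\sigma s}\,\ud s,
\]
where by the hypothesis $a^{*}>0$ we may and will choose $L=L_{i\gamma\infty}$ for a suitable vertical line on which the integrand decays fast enough that the Mellin--Barnes integral converges absolutely (by Theorem \ref{T:1.2(3)} and the estimates in \cite{KilbasSaigo04H}). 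First I would apply Fubini's theorem to write
\[
\int_{0}^{\infty}e^{-tx}x^{w}\FoxH{m,n}{p,q}{ax^{\sigma}}{(a_i,\alpha_i)_{1,p}}{(b_j,\beta_j)_{1,q}}\ud x
=\frac{1}{2\pi i}\int_{L}\mathcal{H}^{m,n}_{p,q}(s)\,a^{-s}\left(\int_{0}^{\infty}e^{-tx}x^{w-\sigma s}\ud x\right)\ud s.
\]

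Next, the inner integral evaluates, via the standard Gamma integral, to $\Gamma(w-\sigma s+1)/t^{w-\sigma s+1}$, provided $\Re(w-\sigma s+1)>0$ along $L$. Since $L=L_{i\gamma\infty}$ can be chosen so that $\Re(s)$ is slightly to the right of every pole $-b_{j}/\beta_{j}$ (for $j=1,\dots,m$), the hypothesis $\sigma\min_{j}\Re(b_{j})/\beta_{j}+\Re(w)>-1$ guarantees that $\Re(w+1-\sigma\gamma)>0$, so the Gamma integral is valid for every $s\in L$. Substituting back, the combined integrand becomes
\[
\frac{1}{t^{w+1}}\,\mathcal{H}^{m,n}_{p,q}(s)\,\Gamma(1-(-w)-\sigma s)\,\left(\frac{a}{t^{\sigma}}\right)^{-s},
\]
and the factor $\Gamma(1-(-w)-\sigma s)$ is of the form $\Gamma(1-a_{0}-\alpha_{0}s)$ with the new parameter $(a_{0},\alpha_{0})=(-w,\sigma)$, which fits exactly into the numerator product that defines $\mathcal{H}^{m,n+1}_{p+1,q}$ with $(-w,\sigma)$ prepended to the list of $a_{i}$'s.

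Finally I would identify the resulting contour integral with the definition of the Fox H-function $\FoxH{m,n+1}{p+1,q}{a/t^{\sigma}}{(-w,\sigma),\,(a_i,\alpha_i)_{1,p}}{(b_j,\beta_j)_{1,q}}$, verifying along the way that (i) the new pole set condition \eqref{E:poles} is satisfied (the pole $(1+w+k)/\sigma$, $k\ge 0$, of $\Gamma(1+w-\sigma s)$ lies strictly to the right of $L$ thanks to $\sigma\min_{j}\Re(b_{j})/\beta_{j}+\Re(w)>-1$), and (ii) the parameter $a^{*}$ for the new H-function is $a^{*}+\sigma>0$, so the contour $L_{i\gamma\infty}$ remains admissible. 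The main obstacle will be the rigorous justification of Fubini: one must show that $\int_{L}\int_{0}^{\infty}|\mathcal{H}^{m,n}_{p,q}(s)\,a^{-s}\,e^{-tx}x^{w-\sigma s}|\ud x\,|\ud s|<\infty$, which reduces (using $|x^{w-\sigma s}|=x^{\Re(w)-\sigma\gamma}$) to combining the decay of $\mathcal{H}^{m,n}_{p,q}(s)$ as $|\Im s|\to\infty$ (a consequence of $a^{*}>0$ and Stirling's formula) with the convergence of $\int_{0}^{\infty}e^{-tx}x^{\Re(w)-\sigma\gamma}\ud x$ guaranteed by the standing hypothesis \eqref{E:sigmabBetaw}.
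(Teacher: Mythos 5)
Your argument is correct. Note that the paper gives no proof of this statement at all---it is quoted verbatim from Corollary 2.3.1 of \cite{KilbasSaigo04H}---so there is no internal argument to compare against; your Mellin--Barnes plus Fubini derivation (pull the Laplace integral inside the contour integral, evaluate $\int_0^\infty e^{-tx}x^{w-\sigma s}\,\ud x=\Gamma(1+w-\sigma s)\,t^{\sigma s-w-1}$, and absorb the new Gamma factor as the prepended upper parameter $(-w,\sigma)$) is precisely the standard proof used in that reference. You also correctly isolate the two points that need care: the vertical contour $L_{i\gamma\infty}$ must simultaneously lie to the right of all poles $b_{jl}$ and satisfy $\sigma\gamma<\Re(w)+1$, which is exactly what hypothesis \eqref{E:sigmabBetaw} permits by taking $\gamma$ close to $-\min_j\Re(b_j)/\beta_j$; and the absolute convergence needed for Fubini follows from the exponential decay $|\mathcal{H}^{m,n}_{p,q}(\gamma+i\tau)|=O(|\tau|^{c}e^{-a^*\pi|\tau|/2})$ guaranteed by $a^*>0$.
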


We also need the following convolution result, which is related to the Hankel transform of the Fox H-function.

\begin{theorem}(Corollary 2.5.1 of \cite{KilbasSaigo04H})\label{T:Hankel}
 Let $a^*>0$ or $a^*=\Delta=0$ and $\Re(\mu)<-1$.
 Let $\eta,w\in\bC$, $\tau>0$ and $\sigma>0$ be such that
 \begin{gather*}
  \sigma \Re(\eta)+\Re(w)+\tau\min_{1\le j\le m}\frac{\Re(b_j)}{\beta_j} >-1,\\
 \tau \min_{1\le i\le n}\frac{1-\Re(a_i)}{\alpha_i}>\Re(w)-\frac{\sigma}{2}+1,
 \end{gather*}
 and
 \[
 \Re(\eta)>-1/2.
 \]
Then for all $a>0$ and $b>0$,
\begin{align*}
\int_0^\infty & (xt)^w J_\eta\left(a (xt)^\sigma\right)
\FoxH{m,n}{p,q}{bt^\tau}{(a_i,\alpha_i)_{1,p}}{(b_j,\beta_j)_{1,q}}\ud t\\
&=\frac{1}{2\sigma x}\left(\frac{2}{a}\right)^{\frac{w+1}{\sigma}}
\FoxH{m,n+1}{p+2,q}{\frac{b}{x^\tau}\left(\frac{2}{a}\right)^{\tau/\sigma}}{\left(1-\frac{w+1}{2\sigma}-\frac{\eta}{2},\frac{\tau}{2\sigma}\right),(a_i,\alpha_i)_{1,p},\left(1-\frac{w+1}{2\sigma}+\frac{\eta}{2},\frac{\tau}{2\sigma}\right)}{(b_j,\beta_j)_{1,q}},\quad x>0.
\end{align*}

\end{theorem}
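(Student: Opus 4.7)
\textbf{Proposal for the proof of Theorem \ref{T:Hankel}.} The strategy is to reduce the Hankel-type integral to a Mellin convolution of two Fox H-functions and then invoke Theorem \ref{T:HConvH}. The starting point is the standard representation of the Bessel function $J_\eta$ as a Fox H-function of type $H^{1,0}_{0,2}$: there exist constants (depending on $\eta$) such that $J_\eta(z)$ can be written, up to a power prefactor, as
\[
\FoxH{1,0}{0,2}{z^2/4}{\midrule}{(\eta/2,1),\:(-\eta/2,1)},
\]
which is the form already used in the proof of Theorem \ref{T:Stable} via \eqref{E:BesselJ}. Composing this with Property \ref{Prop:Power} (to rescale the argument from $z^2$ to $z^{2\sigma}$ in the variable $t$) and absorbing the factor $(xt)^w$ by a shift of the $b_j$ parameters produces a Fox H-function in the variable $t$ whose argument is a power of $a(xt)^\sigma$.

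The second step is to recognize the integral as a Mellin-type convolution $\int_0^\infty F(t)\,G(1/t)\,\ud t/t$ after the trivial rewriting $\ud t = t \cdot \ud t/t$. One applies Property \ref{Prop:frac} to turn the $H$-function in $bt^\tau$ into one in $t^{-\tau}/b$, thereby matching the shape of the left-hand side of Theorem \ref{T:HConvH} with $z = $ (some explicit constant built from $a, b, x, \sigma, \tau$). Theorem \ref{T:HConvH} then yields a single Fox H-function whose $m,n,p,q$ parameters are obtained by concatenating the parameter lists of the Bessel $H$-function and the original $H^{m,n}_{p,q}$. A final cleanup using Properties \ref{Prop:Power}, \ref{Prop:Reduction} (to remove redundant rows and columns) and the power normalization brings the answer into the form stated in the theorem, with the characteristic shift $(w+1)/(2\sigma) \pm \eta/2$ appearing in the two extra parameter pairs coming from the Bessel kernel.

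The main obstacle is bookkeeping the admissibility hypotheses of Theorem \ref{T:HConvH} and translating them into the stated constraints on $\eta, w, \sigma, \tau$. Concretely, the Bessel H-function has $a^*_{\text{Bess}} = 2 > 0$ and $\Delta_{\text{Bess}} = 2 \ne 0$, so one of the first two conditions of Theorem \ref{T:HConvH} is automatically met once one assumes $a^* > 0$ (or the degenerate case $a^*=\Delta=0$, $\Re(\mu)<-1$). The delicate part is verifying the strip condition \eqref{E:HConvH}: after Property \ref{Prop:Power} rescaling, the quantities $A_i + B_j$ are precisely the three inequalities in the hypothesis — $\sigma \Re(\eta) + \Re(w) + \tau \min_j \Re(b_j)/\beta_j > -1$ controls $B_1 + B_2$ (via the Bessel pole at $s = -(w+1)/(2\sigma) - \eta/2$ and the $b_j$ poles of the original H-function), $\tau \min_i(1-\Re(a_i))/\alpha_i > \Re(w) - \sigma/2 + 1$ controls $A_1 + A_2$, and $\Re(\eta) > -1/2$ guarantees $A_1 + B_1 > 0$ for the Bessel factor by itself. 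Once these identifications are made, Theorem \ref{T:HConvH} applies directly.

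The last step, a routine but careful Mellin-Barnes substitution, serves only to check that the prefactor $(2\sigma x)^{-1}(2/a)^{(w+1)/\sigma}$ comes out correctly from the change of variable implicit in Property \ref{Prop:Power} (scaling $t \mapsto t^{1/\tau}$ style) and from the $\ud t / t$ Haar measure on $(0,\infty)$. Aside from this verification and the parameter bookkeeping, the whole argument is algebraic manipulation of Fox H-functions via the three Properties \ref{Prop:Reduction}, \ref{Prop:frac}, \ref{Prop:Power} plus one invocation of Theorem \ref{T:HConvH}.
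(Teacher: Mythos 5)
First, a point of comparison: the paper does not prove this statement at all --- it is quoted verbatim as Corollary 2.5.1 of \cite{KilbasSaigo04H} --- so any derivation you supply is ``new'' relative to the paper. Your strategy (represent $J_\eta$ as an $H^{1,0}_{0,2}$ via \eqref{E:BesselJ}, absorb the power $(xt)^w$ and the Haar-measure correction into the free parameter of that representation, flip one factor with Property \ref{Prop:frac}, and apply the Mellin-convolution Theorem \ref{T:HConvH}) is the standard route to such formulas, and is in fact the alternative the authors themselves point out immediately after the proof of Lemma \ref{L:ZExp} for the special case arising there. In outline it works, and the two extra parameter pairs $\left(1-\frac{w+1}{2\sigma}\mp\frac{\eta}{2},\frac{\tau}{2\sigma}\right)$ are indeed what concatenation of the (shifted, rescaled) Bessel parameters produces.

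There are, however, two concrete defects. First, your admissibility check is wrong on the numbers: for an $H^{1,0}_{0,2}$ one has $a^*_{\mathrm{Bess}}=\beta_1-\beta_2=0$, not $2$ (it is $\Delta_{\mathrm{Bess}}$ that equals $2$). With $a^*_{\mathrm{Bess}}=0$ and $\Delta_{\mathrm{Bess}}\ne 0$ you can still invoke condition (1) of Theorem \ref{T:HConvH} when the main H-function has $a^*>0$; but in the degenerate alternative of the hypothesis, $a^*=\Delta=0$ and $\Re(\mu)<-1$, none of conditions (1)--(5) of Theorem \ref{T:HConvH} applies (condition (3) would require $a^*_{\mathrm{Bess}}>0$, conditions (4)--(5) would require $\Delta_{\mathrm{Bess}}=0$), so that half of the theorem is not reachable by your argument as written. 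Second, the translation of the three stated inequalities into the strip condition \eqref{E:HConvH} is asserted rather than computed, and at least one assignment cannot be right as stated: for an $H^{1,0}_{0,2}$ (or its $H^{0,1}_{2,0}$ flip) one of the two quantities $A$, $B$ is an empty minimum and equals $+\infty$ by convention, so $\Re(\eta)>-1/2$ cannot be the condition ``$A_1+B_1>0$ for the Bessel factor by itself''; it enters through the cross terms after the $\frac{w+1}{2\sigma}\pm\frac{\eta}{2}$ shift. Since the entire content of the corollary is precisely this bookkeeping (the algebraic identity is automatic once Theorem \ref{T:HConvH} applies), these verifications must be carried out explicitly before the proof can be considered complete.
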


\subsection{Riemann-Liouville fractional derivatives}
Recall that the Riemann-Liouville fractional derivative of order $\alpha\in(0,1)$ is defined in \eqref{E:RL-D}.
The following results are used several times in this paper, which is a special case of part (i) of Theorem 2.8 of \cite{KilbasSaigo04H}.
\begin{theorem}\label{T:RL-D}
Let $\alpha\in (0,1)$, $w\in\bC$ and $\sigma>0$.
If $a^*>0$ and \eqref{E:sigmabBetaw} holds, then
\[
\left[D_{0+}^\alpha t^w \FoxH{m,n}{p,q}{t^\sigma}{(a_i,\alpha_i)_{1,p}}{(b_j,\beta_j)_{1,q}}\right](x)
=x^{w-\alpha}
\FoxH{m,n+1}{p+1,q+1}{x^\sigma}{(-w,\sigma),(a_i,\alpha_i)_{1,p}}{(b_j,\beta_j)_{1,q},(-w+\alpha,\sigma)}.
\]
\end{theorem}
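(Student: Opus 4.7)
The plan is to reduce the identity to a routine computation on the Mellin--Barnes integrand. Using Definition \ref{D:H}, write
\[
t^w \FoxH{m,n}{p,q}{t^\sigma}{(a_i,\alpha_i)_{1,p}}{(b_j,\beta_j)_{1,q}}
= \frac{1}{2\pi i}\int_L \cH^{m,n}_{p,q}(s)\, t^{w-\sigma s}\, \ud s,
\]
where $L=L_{i\gamma\infty}$. Under the hypothesis $a^*>0$ the integral converges absolutely on $L$ by Theorem \ref{T:1.2(3)}, and one checks that $\cH^{m,n}_{p,q}(s)$ decays at least exponentially in $|\Im s|$. The condition \eqref{E:sigmabBetaw} together with the location of the contour $L$ ensures that $\Re(w-\sigma s)>-1$ uniformly on $L$, so $t\mapsto t^w H(t^\sigma)$ is locally integrable near $0$ and the fractional derivative $D^\alpha_{0+}$ is well defined.

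Next I would justify the interchange of $D^\alpha_{0+}$ with the contour integral. Write $D^\alpha_{0+}=\frac{\ud}{\ud x}\,I^{1-\alpha}_{0+}$ and apply Fubini to the $I^{1-\alpha}_{0+}$-integral (absolute convergence of the double integral follows from the two conditions above, again using the decay of $\cH^{m,n}_{p,q}$ along $L$); the outer $\frac{\ud}{\ud x}$ can then be brought inside by dominated convergence. Once the interchange is carried out I need only the classical power rule
\[
D^{\alpha}_{0+}\,t^{\lambda}(x) \;=\; \frac{\Gamma(\lambda+1)}{\Gamma(\lambda-\alpha+1)}\, x^{\lambda-\alpha},
\qquad \Re(\lambda)>-1,
\]
applied with $\lambda=w-\sigma s$. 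This produces
\[
D^\alpha_{0+}\!\left[t^w \FoxH{m,n}{p,q}{t^\sigma}{(a_i,\alpha_i)_{1,p}}{(b_j,\beta_j)_{1,q}}\right]\!(x)
= x^{w-\alpha}\cdot \frac{1}{2\pi i}\int_L \cH^{m,n}_{p,q}(s)\,\frac{\Gamma(1+w-\sigma s)}{\Gamma(1+w-\alpha-\sigma s)}\, x^{-\sigma s}\, \ud s.
\]

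Finally I would recognize the new integrand as the Mellin--Barnes kernel of a Fox H-function with two extra parameters. The factor $\Gamma(1+w-\sigma s)=\Gamma(1-(-w)-\sigma s)$ fits the numerator slot for $\Gamma(1-a_i-\alpha_i s)$ with $(a,\alpha)=(-w,\sigma)$, hence is inserted at position $n+1$ of the top row; the factor $1/\Gamma(1+w-\alpha-\sigma s)=1/\Gamma(1-(-w+\alpha)-\sigma s)$ fits the denominator slot for $\Gamma(1-b_j-\beta_j s)$ with $(b,\beta)=(-w+\alpha,\sigma)$, hence is inserted at position $q+1$ of the bottom row. This yields exactly
\[
x^{w-\alpha}
\FoxH{m,n+1}{p+1,q+1}{x^\sigma}{(-w,\sigma),\,(a_i,\alpha_i)_{1,p}}{(b_j,\beta_j)_{1,q},\,(-w+\alpha,\sigma)}.
\]
One also needs to check that the contour for this new H-function, which must still separate the poles of the $b$-row from those of the $a$-row, can be chosen consistently with $L$; since only one pole is added on each side (at $s=(1+w)/\sigma$ on the right and at $s=(1+w-\alpha)/\sigma$ on the right---both belonging to $\Gamma$ factors in the ``$a$-row'', i.e.\ to be separated to the right of $L$), this is automatic from \eqref{E:sigmabBetaw}.

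\textbf{Main obstacle.} The substantive step is the interchange of $D^\alpha_{0+}$ with the Mellin--Barnes integral. Everything else is bookkeeping of parameters. The interchange requires absolute integrability of the double integral, which forces the two standing assumptions ($a^*>0$ for vertical decay of $\cH^{m,n}_{p,q}$ and \eqref{E:sigmabBetaw} for integrability near $t=0$); once these are in place the rest of the proof is a one-line computation using the gamma-function power rule.
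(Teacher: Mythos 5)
Your argument is correct, and it is essentially the proof of the result the paper relies on: the paper gives no proof of Theorem \ref{T:RL-D} at all, but simply cites part (i) of Theorem 2.8 of \cite{KilbasSaigo04H}, and the computation in that reference is exactly your route — expand the H-function as a Mellin--Barnes integral, apply the power rule $D^{\alpha}_{0+}t^{\lambda}=\frac{\Gamma(\lambda+1)}{\Gamma(\lambda+1-\alpha)}x^{\lambda-\alpha}$ to $\lambda=w-\sigma s$ under the contour integral, and absorb the resulting ratio $\Gamma(1+w-\sigma s)/\Gamma(1+w-\alpha-\sigma s)$ into the kernel as the new parameter pairs $(-w,\sigma)$ and $(-w+\alpha,\sigma)$. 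Your identification of the two standing hypotheses is also the right one: $a^*>0$ gives exponential decay of $\cH^{m,n}_{p,q}(s)$ along $L_{i\gamma\infty}$ (while the new gamma ratio grows only polynomially, like $|\sigma s|^{\alpha}$, by Stirling), and \eqref{E:sigmabBetaw} is precisely the condition that $\gamma$ can be placed in the nonempty interval $\bigl(-\min_j\Re(b_j)/\beta_j,\ (1+\Re(w))/\sigma\bigr)$, which simultaneously gives $\Re(w-\sigma s)>-1$ on $L$ and keeps the new pole string of $\Gamma(1+w-\sigma s)$ to the right of the contour. Two cosmetic remarks: the theorem places $(-w,\sigma)$ in position $1$ rather than $n+1$, but any of the first $n+1$ top-row slots yields the same kernel, so this is immaterial; and the factor $1/\Gamma(1+w-\alpha-\sigma s)$ is a reciprocal gamma, hence entire and contributes no poles, so your parenthetical about separating \emph{both} new pole strings is slightly overstated (only one new pole string appears), which only makes the contour condition easier to satisfy.
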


\subsection{Some special cases}

The Mittag-Leffler function is a special case of the Fox H-function (see (2.9.27) of \cite{KilbasSaigo04H}):
% We shall need the following Mittag-Leffler function,  which is a special case of the Fox H-function (see \cite[(2.9.27)]{KilbasSaigo04H}).
\begin{align}\label{E:ML-H}
 E_{\rho,\mu} (x)= \FoxH{1,1}{1,2}{-x}{(0,1)}{(0,1),\:(1-\mu,\rho)},\qquad \rho>0, \: \mu\in\mathbb{C}.
\end{align}
The $cos(\cdot)$ function can be represented as (see \cite[(2.9.8)]{KilbasSaigo04H}):
\begin{align}\label{E:cos}
 \cos(z) &= \sqrt{\pi}\: \FoxH{1,0}{0,2}{\frac{z^2}{4}}{\hline}{(0,1),\: (1/2,1)}.
\end{align}
The Bessel function of the first kind $J_\eta(z)$ is equal to (see (2.9.18) of \cite{KilbasSaigo04H})
\begin{align}\label{E:BesselJ}
J_\eta(z) = \left(\frac{2}{z}\right)^a\:
\FoxH{1,0}{0,2}{\frac{z^2}{4}}{\hline}{\left(\frac{a+\eta}{2},1\right),\:\left(\frac{a-\eta}{2},1\right)}.
\end{align}
Another special case is (see (2.9.4) of \cite{KilbasSaigo04H})
\begin{align}
\label{E:zExp}
\FoxH{1,0}{0,1}{z}{\hline}{(b,\beta)} & =\frac{1}{\beta}z^{b/\beta}\exp\left(-z^{1/\beta}\right).
\end{align}

\section*{Acknowledgements}
Le Chen thanks Francesco Mainardi for some useful discussion and
thanks Anatoly N. Kochubei for pointing out Pskhu's paper \cite{Pskhu09}.

\end{document}